\theoremstyle:=definition,remark,plain\do{%
        \expandafter\g@addto@macro\csname th@\theoremstyle\endcsname{%
            \addtolength\thm@preskip\parskip
            }%
        }
\numberwithin{equation}{section}
\newtheorem{theorem}{Theorem}[section]
\newtheorem{proposition}[theorem]{Proposition}
\newtheorem{lemma}[theorem]{Lemma}
\newtheorem{corollary}[theorem]{Corollary}
\theoremstyle{definition}
\theoremstyle{remark}
\newtheorem{remark}[theorem]{Remark}
\DeclareMathOperator{\supp}{supp}
\newcommand{\one}{\mathds{1}}
\newcommand{\E}{\mathbb E}
\newcommand{\F}{\mathcal F}
\newcommand{\B}{\mathcal B}
\newcommand{\PP}{\mathbb P}
\newcommand{\pQV}[1]{\langle#1\rangle}
\newcommand{\QV}[1]{\lbrack#1\rbrack}
\newcommand{\abs}[1]{\lvert#1\rvert}
\newcommand{\Abs}[1]{\left\lvert#1\right\rvert}
\newcommand{\norm}[1]{\lVert#1\rVert}
\newcommand{\llipnorm}[1]{\lVert#1\rVert_\mathrm{Lip}}
\newcommand{\R}{\mathbb{R}}
\newcommand{\Psim}{\stackrel{P}{\sim}}
\newcommand{\todo}[1]{}
\title{Concentration Inequalities for Additive Functionals: a
  Martingale Approach}
\author{Bob Pepin}
\begin{document}
\maketitle
\begin{abstract}
  This work shows how exponential concentration inequalities for
  additive functionals of stochastic processes over a finite time
  interval can be derived from concentration inequalities for
  martingales. The approach is entirely probabilistic and naturally
  includes time-inhomogeneous and non-stationary processes as well as
  initial laws concentrated on a single point. The class of processes
  studied includes martingales, Markov processes and general square
  integrable càdlàg processes.  The general approach is complemented by
  a simple and direct method for martingales, diffusions and
  discrete-time Markov processes. The method is illustrated by deriving
  concentration inequalities for the Polyak-Ruppert algorithm, SDEs
  with time-dependent drift coefficients ``contractive at infinity''
  with both Lipschitz and squared Lipschitz observables, some
  classical martingales and non-elliptic SDEs. \\
  \emph{Keywords:} Time average, additive functional, inhomogeneous
  functional, concentration inequality, time-inhomogeneous Markov
  process, martingale
\end{abstract}

\section{Introduction}

In this work we consider concentration inequalities for additive
functionals of the form
\begin{equation*}
  \int_0^T X_t \, dt
\end{equation*}
where $X$ is a real-valued stochastic process. The methods we develop
apply to a broad class of processes, and we will give theorems and
examples that go beyond the classical setting of stationary Markov
processes. We will treat in depth the cases where $X$ is a martingale
or $X_t = f(t, Y_t)$ for a Markov process $Y$ and $f$ in an
appropriate class of functions. The concentration inequalities will be
derived for the additive functionals centered around their
expectation, which allows us to naturally treat non-stationary
processes such as time-inhomogeneous diffusions.

We proceed to give an overview of the main results. In
Section~\ref{sec:direct} we derive some representative results using
short, self-contained proofs based on direct calculations. First, we
show in Proposition~\ref{prop:direct:mart} that for any continuous
local martingale $X$ such that $X_0 = 0$ we have the following
concentration inequality for any $T, R > 0$:
\begin{equation*}
  \PP\left(\int_0^T X_u\,du \geq R\;; \int_0^T (T-u)^2\,d\QV{X}_u \leq
    \sigma^2\right) \leq \exp\left(-\frac{R^2}{2\sigma^2}\right).
\end{equation*}
To the author's knowledge, the systematic treatment of concentration
inequalities for additive functionals of martingales has not appeared
in the literature before.

We will then move on to solutions to SDEs of the form
\begin{equation*}
  dX_t = b(t, X_t) dt + \sigma dB_t,
\end{equation*}
with $X_0$ deterministic. In particular, denoting $P_{s,t}$ the
Markov transition operator associated to $X$, we will show in
Corollary~\ref{corr:sigmakappa} that
if there exist constants $c, \kappa > 0$ such that
\begin{equation*}
\abs{\sigma^\top \nabla P_{t,u} f(x)} \leq c e^{-\kappa (u-t)},
\quad x \in \R^n, 
0 \leq t \leq u
\end{equation*}
then we have the following Gaussian concentration inequality for all
$R, T > 0$:
\begin{equation*}
\PP \left(\frac1T \int_0^T f(u, X_u)\,du - \E\left[ \frac1T
\int_0^T f(u, X_u)\,du\right] \geq R\right) \leq
\exp\left(-\frac{\kappa^2 R^2 T}{2 c^2}\right).
\end{equation*}
The main novelty in the corollary is the treatment of time-inhomogeneous
SDEs and the method of proof. The Proposition from which the corollary
is derived also provides a novel, refined statement in terms of a bound of
$\abs{\sigma^\top \nabla P_{t,u} f}$ along trajectories of $X$.

Section~\ref{sec:direct} concludes with the case of discrete-time
processes in Proposition~\ref{prop:discrete}, again giving careful
consideration to the time-inhomogeneous case and controlling the relevant
quantities along trajectories of the process. Concretely, we show that
for a discrete-time stochastic process $X_t$ and function $f$ such
that
\begin{align*}
\abs{X_t - X_{t-1}} &\leq C_t, \quad t \geq 1, \\
\abs{P_{s,t} f(x) - P_{s,t} f(y)} &\leq \sigma_s\, (1-\kappa_s)^{t-s}\,
  \abs{x-y}, \quad x,y \in \R^n, 0 \leq s \leq t.
\end{align*}
we have
\begin{align*}
  \PP \left(\sum_{u=1}^{t} f(u, X_u) - \E\left[\sum_{u=1}^{t} f(u, X_u)\right] \geq R \;; \sum_{t=1}^T \frac{\sigma_t^2 \, C_t^2}{\kappa_t^2} \leq a^2\right)
  \leq \exp\left(-\frac{R^2}{8 a^2}\right).
\end{align*}
The careful treatment of the time-inhomogeneous case and control on
the level of trajectories enables in particular for the first time the
derivation of concentration inequalities for the Polyak-Ruppert
algorithm of the correct order in Section~\ref{sec:polyakruppert}
(concentration inequalities for the linear case were published
concurrently with this work in~\autocite{mou_linear_2020}).

Section~\ref{sec:main} is dedicated to a wide-ranging generalization of the
results from the previous section. In Subsection~\ref{sec:general} we
introduce a family of auxiliary martingales $Z^u_t = E^{\F_t} X_u$ and
show that for general square integrable processes, the concentration
properties of $\int_0^T X_u\, du$ are intimately linked to the
predictable quadratic covariation $\pQV{Z^u, Z^v}$ and jumps
$\Delta Z^u$ of the auxiliary martingales. In
Subsection~\ref{sec:general:martingales} we recover and extend the
martingale results from Section~\ref{sec:direct} to the discontinuous
setting using the general method. In
Subsection~\ref{sec:general:markov} we apply the general method to
general Markov processes and recover expressions for $\pQV{Z^u, Z^v}$
in terms of the squared field operator $\Gamma$, generalizing the
results on Markov processes from Section~\ref{sec:direct} to general
Markov processes on Polish spaces. All of the
results from the preceding subsections are novel in their generality. We
conclude Section~\ref{sec:main} with Subsection~\ref{subsec:martineq}
where we show how to incorporate arbitrary distributions of $X_0$ and
recall a number of martingale inequalities. The subsection also
includes in Corollary~\ref{corr:selfnormCD} a novel approach to obtain
Bernstein-type inequalities in some ``self-bounding'' cases.

The final Section~\ref{sec:examples} illustrates how to apply the
results from the preceding section on a number of concrete
cases. Subsection~\ref{sec:polyakruppert} contains the novel results
on Polyak-Ruppert mentioned above.

Subsection~\ref{example:sde} provides a concrete example of an SDE
case with explicit conditions on the drift and diffusion coefficients
as well as the observable function. Using known results on gradient
bounds for $P_{s,t}$ when the drift coefficient is ``contractive at
infinity''  characterized by constants $\rho, \kappa$, we show that for any initial
law $\nu$ satisfying a $T_1(C)$ transport inequality and Lipschitz
function $f$, we have
\begin{multline*}
  \PP\left(\frac{1}{T}\int_0^T f(X_t)\,dt - \frac1T \int_0^T \mu_t(f)
    \,dt \geq R +
       \rho \llipnorm{f} \frac{1-e^{-\kappa T}}{\kappa T} W_1(\mu_0, \nu)\right) \\ \leq
  \exp\left({-\frac{\kappa^2 R^2 T}{2 \rho^2 \llipnorm{f}^2 \left(1 +
      C\,\frac{1-e^{-\kappa T}}{T}\right)}}\right)
\end{multline*}
for a unique evolution system of measures $\mu_t$ (if the process has a stationary
measure $\mu$ then $\mu_t = \mu$ for all $t$).

Subsection~\ref{sec:examples:mart} provides some concrete examples using
classical martingales as integrands: Brownian motion, the compensated
Poisson process and compensated squared Brownian motion $B_t^2 - t$.

Subsections~\ref{sec:squaredOU} and~\ref{sec:squaredOU} treat the
cases where the integrand $X$ is either the squared Ornstein-Uhlenbeck
process or more generally the square of a Lipschitz function. These
cases go beyond the scope of most previously published approaches to
concentration inequalities for additive
functionals. The final Subsection~\ref{sec:examples:degenerate} presents a
simple case of a highly non-elliptic SDE, which yields easily to the
probabilistic methods presented here but is outside of the scope of
previous approaches based for example on Poisson equations.

\emph{About the literature.}  In the Markovian setting, our approach
is most closely related to the work of Joulin~\autocite{joulin_new_2009},
and we recover and extend the results from that work
(Propositions~\ref{prop:Markov} and~\ref{prop:sde:mu}). The cases of
martingales and general square integrable processes do not seem to
have been systematically studied in the literature.


Most previous results on concentration inequalities for functionals of
the form $S_t$ have been obtained for time-homogeneous Markov
processes using functional inequalities. The works
~\autocite{cattiaux_deviation_2008,guillin_transportation-information_2009-1,gao_bernstein-type_2014}
require the existence of a stationary measure and an initial
distribution that has an integrable density with respect to the
stationary measure. The same holds true for the combinatorial and
perturbation arguments in the classic
paper~\autocite{lezaud_chernoff_2001,}. In~\autocite{wu_transportation_2004,}
the authors establish concentration inequalities around the
expectation using stochastic calculus and Girsanov's theorem under
strong contractivity conditions. Some concentration inequalities for
inhomogeneous functionals have previously been established
in~\autocite{guillin_moderate_2001,}. A different approach using
renewal processes has been used in the
work~\autocite{locherbach_deviation_2012} to establish concentration
inequalities for functionals with bounded integrands.

For Markov processes, the mixing conditions in this work are most
naturally formulated in terms of bounds on either the Lipschitz
seminorm, gradient or squared field (carré du champs) operator of the
semigroup. Bounds on the Lipschitz seminorm are closely related to
contractivity in the $L^1$ transportation distance and can for
instance be found
in~\autocite{eberle_reflection_2015,assaraf_computation_2017} for
elliptic diffusions, in~\autocite{wu_gradient_2009,} for the
Riemannian case, in~\autocite{eberle_couplings_2019,} for Langevin
dynamics or in~\autocite{hairer_asymptotic_2011,} for stochastic delay
equations. See
also~\autocite{ollivier_ricci_2009,joulin_curvature_2010,} for the
discrete-time case and a large number of examples in both discrete and
continous time. Gradient estimates for semigroups can be obtained
using Bismut-type formulas, see for
example~\autocite{elworthy_formulae_1994,thalmaier_differentiation_1997,crisan_pointwise_2016},
the works \autocite{cheng_evolution_2018,cheng_exponential_2020} for
the non-autonomous case as well as the
textbook~\autocite{wang_analysis_2014,}. Finally, in terms of the
squared field operator, our mixing conditions are a relaxation of the
Bakry-Emery curvature-dimension
condition~\autocite{bakry_analysis_2014} since we allow for a
prefactor strictly greater than $1$.

\emph{Notation.} For a right-continuous process $(X_t)_{t\geq0}$ with left limits we write
$X_{t-} = \lim_{\varepsilon \to 0^+} X_{t-\varepsilon}$ and $\Delta X_t = X_t -
X_{t-}$. For a $\sigma$-field $\F$ and a random variable $X$, we denote $\E^{\F} X$
the conditional expectation of $X$ with respect to $\F$.

\section{Direct Approach}\label{sec:direct}
\subsection{Continuous Martingales}
In this and the following subsections we will establish concentration
inequalities by focusing on the cases of continuous local martingales,
continuous solutions to SDEs and discrete-time stochastic processes,
all with their initial law concentrated on a single point. In the
first two cases, we provide alternative and more direct proofs of
results that also follow from the general approach in Section~\ref
{sec:main}. Compared to the general approach, the direct proofs also
provide an explicit martingale representation for the additive
functionals under consideration. The discrete-time case introduces the
ideas of Section~\ref{sec:main} in a technically simpler setting. The
focus on a restricted class of processes keeps the proofs short and
self-contained and formal computations easily justifiable.  The
concentration inequalities presented here will be considerably
generalized to broader classes of processes, integrands and initial
laws in Section~\ref{sec:main}.

We begin with the case of a continuous local martingale. Even though
it is the simplest scenario in our framework, it has not been studied
systematically in the literature, which tends to concentrate on
concentration inequalities for additive functionals of stationary
Markov processes.

\begin{proposition}\label{prop:direct:mart}
  For a continuous local martingale $X$ such that $X_0 = 0$ we have
  the following concentration inequality for any $T, R > 0$:
\begin{equation*}
  \PP\left(\int_0^T X_u\,du \geq R\;; \int_0^T (T-u)^2\,d\QV{X}_u \leq
    \sigma^2\right) \leq \exp\left(-\frac{R^2}{2\sigma^2}\right).
\end{equation*}
Furthermore for $0 \leq t \leq T$
\begin{equation}\label{eq:martdecomp}
  \int_0^t X_u\,du = \int_0^t (T-u) \,dX_u - (T-t)X_t.
\end{equation}
\end{proposition}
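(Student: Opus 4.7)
The plan is to first establish the decomposition~\eqref{eq:martdecomp}, then evaluate it at $t=T$ to reduce the problem to a standard concentration inequality for a continuous local martingale controlled by its quadratic variation.

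For the decomposition, the natural tool is integration by parts. The process $t \mapsto T-t$ is deterministic and of bounded variation, so Itô's product rule applied to $(T-t)X_t$ gives
\begin{equation*}
  d\bigl[(T-t)X_t\bigr] = (T-t)\,dX_t - X_t\,dt.
\end{equation*}
Integrating from $0$ to $t$ and using $X_0 = 0$ yields~\eqref{eq:martdecomp}. Setting $t = T$, the boundary term $(T-t)X_t$ vanishes and we obtain
\begin{equation*}
  \int_0^T X_u\,du = \int_0^T (T-u)\,dX_u =: M_T,
\end{equation*}
which exhibits the additive functional as the terminal value of a continuous local martingale $M$ with quadratic variation $\QV{M}_T = \int_0^T (T-u)^2\,d\QV{X}_u$.

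The concentration statement then reduces to the standard Bernstein-type inequality for continuous local martingales: for any continuous local martingale $M$ starting at $0$ and any $R, \sigma > 0$,
\begin{equation*}
  \PP\bigl(M_T \geq R\,;\,\QV{M}_T \leq \sigma^2\bigr) \leq \exp\!\left(-\tfrac{R^2}{2\sigma^2}\right).
\end{equation*}
This follows from a standard Chernoff argument: the exponential process $\exp(\lambda M_t - \tfrac{\lambda^2}{2}\QV{M}_t)$ is a nonnegative local martingale, hence a supermartingale; stopping at $\tau = \inf\{t : \QV{M}_t > \sigma^2\}$ and optimizing over $\lambda > 0$ gives the bound.

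The only subtle point is the localization step required so that $M$ and its exponential are genuine (super)martingales, which is standard. Applying this inequality to $M$ defined above gives exactly the concentration bound claimed in the proposition, completing the proof.
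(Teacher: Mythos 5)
Your proposal is correct and follows essentially the same route as the paper: integration by parts on $(T-t)X_t$ to obtain~\eqref{eq:martdecomp} and the representation $\int_0^T X_u\,du = \int_0^T(T-u)\,dX_u$, followed by the standard exponential supermartingale (Chernoff) bound applied to this local martingale. The only cosmetic difference is that you localize via the stopping time $\tau = \inf\{t : \QV{M}_t > \sigma^2\}$, whereas the paper restricts the expectation to the event $\{\QV{M}_t \leq \sigma^2\}$ directly; both are standard and equivalent here.
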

\begin{proof}
Fix $T > 0$ and define a local martingale $M_t^T$ by
\begin{equation*}
  M^T_t = \int_0^t (T-u)\,dX_u.
\end{equation*}
Using integration by parts we obtain~\eqref{eq:martdecomp}
\begin{equation*}
  M^T_t = (T-t)X_t + \int_0^t X_u\,du
\end{equation*}
so that in particular $M^T$ coincides with $\int_0^\cdot X_u\,du$ at
time $T$:
\begin{equation}\label{eq:MTT}
  M^T_T = \int_0^T X_u\,du.
\end{equation}

It is well-known that for any continuous local martingale $M$ and $t\geq0$
\begin{equation}\label{eq:ddineq}
  \PP\left(M_t \geq R; \QV{M}_t \leq \sigma^2\right) \leq \exp\left(-\frac{R^2}{2\sigma^2}\right).
\end{equation}

Indeed, for $\lambda > 0$ let
$\mathcal{E}^\lambda(M)_t = \exp\left(\lambda M_t - \tfrac{\lambda^2}2 \QV{M}_t\right)$.
Since
$d\mathcal{E}^\lambda(M)_t = \mathcal{E}^\lambda(M)_t \,dM^T_t$
the process $\mathcal{E}^\lambda(M^T)_t$ is a positive local
martingale and therefore a supermartingale so that
$\E \, \mathcal{E}^\lambda(M^T)_T \leq \E \, \mathcal{E}^\lambda(M^T)_0 = 1$.
By Chebyshev's inequality, for any $R, \sigma^2 > 0$,
\begin{multline*}
  \PP\left(M_t \geq R; \QV{M}_t \leq \sigma^2\right)
  \leq \exp\left(-\lambda R + \tfrac{\lambda^2}2 \sigma^2\right) \; \E \left[
      \exp\left(\lambda M_t - \tfrac{\lambda^2}2 \sigma^2\right);
      \QV{M}_t \leq \sigma^2\right] \\ 
  \leq \exp\left(-\lambda R + \tfrac{\lambda^2}2 \sigma^2\right) \; \E
  \, \mathcal{E}^\lambda(M)_t \leq \exp\left(-\lambda R +
    \tfrac{\lambda^2}2 \sigma^2\right) \leq \exp\left(-\frac{R^2}{2\sigma^2}\right)
\end{multline*}
where $\lambda > 0$ was arbitrary and the last inequality follows by
optimizing over $\lambda$.

From the definition of $M^T$ and elementary properties of the
quadratic variation, we get
\begin{equation}\label{eq:QVcont}
  \QV{M^T}_t = \int_0^t (T-u)^2 d\QV{X}_u.
\end{equation}

By inserting~\eqref{eq:MTT} and~\eqref{eq:QVcont} into~\eqref{eq:ddineq}
we finally obtain
\begin{equation*}
  \PP\left(\int_0^T X_u\,du \geq R \;; \int_0^T (T-u)^2\,d\QV{X}_u \leq
    \sigma^2\right) \leq \exp\left(-\frac{R^2}{2\sigma^2}\right).
\end{equation*}
\end{proof}

\begin{corollary}
If $d\QV{X}_u \leq
\sigma^2 (T-u)^\alpha \,du$ for some $\sigma^2 > 0, \alpha \in (-3, +\infty)$ and all $0 \leq u
\leq T$ then for all $R > 0$
\begin{equation*}
  \PP\left(\frac{1}{T^{2+\alpha/2}}\int_0^T X_u\,du \geq R\right) \leq
  \exp\left(-\frac{(3+\alpha) R^2 T}{2 \sigma^2}\right).
\end{equation*}
\end{corollary}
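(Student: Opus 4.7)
The plan is to apply Proposition~\ref{prop:direct:mart} after computing an explicit upper bound on the random quantity $\int_0^T (T-u)^2\,d\QV{X}_u$ under the deterministic hypothesis $d\QV{X}_u \leq \sigma^2 (T-u)^\alpha\,du$. Since the hypothesis provides an almost-sure bound, the event involving the quadratic variation in the proposition's statement has probability one for an appropriate choice of $\sigma^2$, so the corollary reduces to a deterministic computation of the prefactor followed by rescaling.

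First I would substitute the bound on $d\QV{X}_u$ to get
\begin{equation*}
  \int_0^T (T-u)^2 \, d\QV{X}_u \leq \sigma^2 \int_0^T (T-u)^{2+\alpha}\,du = \frac{\sigma^2 T^{3+\alpha}}{3+\alpha},
\end{equation*}
where the integral converges precisely because of the assumption $\alpha > -3$, which is exactly why that range appears in the statement. Then I would apply Proposition~\ref{prop:direct:mart} with this deterministic bound playing the role of $\sigma^2$ and with threshold $R T^{2+\alpha/2}$ in place of $R$, yielding
\begin{equation*}
  \PP\left(\int_0^T X_u\,du \geq R T^{2+\alpha/2}\right) \leq \exp\left(-\frac{R^2 T^{4+\alpha}}{2 \sigma^2 T^{3+\alpha}/(3+\alpha)}\right) = \exp\left(-\frac{(3+\alpha) R^2 T}{2\sigma^2}\right).
\end{equation*}
Dividing both sides inside the probability by $T^{2+\alpha/2}$ gives the claimed inequality.

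There is no real obstacle here: the only subtlety is keeping track of the exponent of $T$ (which is $3+\alpha$ from the primitive of $(T-u)^{2+\alpha}$ and cancels correctly against $T^{4+\alpha}$ from squaring $T^{2+\alpha/2}$) and noting that the endpoint $\alpha = -3$ must be excluded to ensure integrability of $(T-u)^{2+\alpha}$ near $u = T$.
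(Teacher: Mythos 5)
Your proof is correct and follows essentially the same route as the paper: bound $\int_0^T (T-u)^2\,d\QV{X}_u$ by $\sigma^2 T^{3+\alpha}/(3+\alpha)$ using the hypothesis (which makes the quadratic-variation event in Proposition~\ref{prop:direct:mart} hold almost surely), then apply that proposition with threshold $RT^{2+\alpha/2}$ and simplify the exponent. Your additional remarks on the role of $\alpha > -3$ and the cancellation of powers of $T$ match the paper's (more terse) computation exactly.
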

\begin{proof}
We have
\begin{equation*}
  \int_0^T (T-u)^2\,d\QV{X}_u \leq \sigma^2 \int_0^T
  (T-u)^{2+\alpha}\,du = \frac{\sigma^2 T^{3+\alpha}}{3+\alpha}
\end{equation*}
  so that
  \begin{align*}
    \lefteqn{\PP\left(\frac{1}{T^{2+\alpha/2}}\int_0^T X_u \, du \geq R\right)}\quad \\
    &= \PP\left(\int_0^T X_u \, du \geq RT^{2+\alpha/2}\;; \int_0^T
      (T-u)^2\,d\QV{X}_u \leq \frac{\sigma^2 T^{3+\alpha}}{3+\alpha}\right)\\
    &\leq \exp\left(-\frac{(3+\alpha) R^2 T}{2 \sigma^2}\right).
  \end{align*}
\end{proof}


\begin{corollary}
  If $d\QV{X}_u \leq e^{-(T-u)}\,du$ for all $0 \leq u
  \leq T$ then for all $R > 0$
  \begin{equation*}
    \PP\left(\frac{1}{\sqrt{T}}\int_0^T X_u\,du \geq R\right) \leq
    \exp\left(-\frac{R^2 T}{4}\right).
  \end{equation*}
\end{corollary}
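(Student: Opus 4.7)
The plan is to apply Proposition~\ref{prop:direct:mart} directly, where the only real work is bounding the quadratic-variation integral $\int_0^T (T-u)^2 \, d\QV{X}_u$ by a constant independent of $T$.

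First I would substitute the hypothesis $d\QV{X}_u \leq e^{-(T-u)}\,du$ into that integral and change variables $v = T-u$ to obtain
\[
\int_0^T (T-u)^2\, d\QV{X}_u \leq \int_0^T v^2 e^{-v}\,dv \leq \int_0^\infty v^2 e^{-v}\,dv = \Gamma(3) = 2.
\]
Thus the hypothesis of Proposition~\ref{prop:direct:mart} holds with the deterministic bound $\sigma^2 = 2$.

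Next I would apply Proposition~\ref{prop:direct:mart} with threshold $R' = R\sqrt{T}$ and $\sigma^2 = 2$, noting that the event on the quadratic variation has probability one. This gives
\[
\PP\left(\frac{1}{\sqrt{T}}\int_0^T X_u\,du \geq R\right) = \PP\left(\int_0^T X_u\,du \geq R\sqrt{T}\right) \leq \exp\left(-\frac{R^2 T}{4}\right),
\]
which is the desired inequality.

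There is no real obstacle here; the only thing to check is the value of the constant $\int_0^\infty v^2 e^{-v}\,dv = 2$, which produces exactly the factor $4 = 2\sigma^2$ in the denominator. The corollary is essentially a bookkeeping exercise after Proposition~\ref{prop:direct:mart}.
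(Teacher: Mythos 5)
Your proposal is correct and follows essentially the same route as the paper: bound $\int_0^T (T-u)^2\,d\QV{X}_u$ by the constant $2$ (the paper evaluates the integral exactly as $2-(T^2+2T+2)e^{-T}\leq 2$, while you extend to $\int_0^\infty v^2e^{-v}\,dv=\Gamma(3)=2$, an immaterial difference), then apply Proposition~\ref{prop:direct:mart} with $R'=R\sqrt{T}$ and $\sigma^2=2$, noting the quadratic-variation event is almost sure. Your handling of the almost-sure event and the constant-tracking are both correct.
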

\begin{proof}
We have
\begin{equation*}
  \int_0^T (T-u)^2\,d\QV{X}_u \leq \int_0^T (T-u)^2 e^{-(T-u)}\,du = 2
  - (T^2 + 2T + 2)e^{-T} \leq 2
\end{equation*}
and the result follows.
\end{proof}

\begin{remark}
The continuity assumption primarily serves to keep the
exposition self-contained by providing a concise proof of the concentration
inequality~\eqref{eq:ddineq} for local martingales. The method extends
to discontinuous local martingales by making use of the more advanced martingale
inequalities described further below in Section~\ref{subsec:martineq}.
\end{remark}

\begin{remark}
  Compared to Proposition~\ref{prop:Martingale}, the result presented
  here is a priori more general because it applies to local
  martingales, whereas Proposition~\ref{prop:Martingale} is stated for
  martingales. On the other hand, the process $M^T$ constructed in the
  proof of this section is also only a local martingale, whereas the
  general method yields a process $M^T$ which is a martingale by
  construction. Under the hypotheses of
  Proposition~\ref{prop:Martingale} that $X_t$ is a true martingale
  with $\E X_t^2 < \infty$ for all $t \geq 0$, the present result also
  yields a true martingale, since then
  $\E\QV{M^T}_t \leq T^2\, \E\QV{X}_t < \infty$ for all $t \geq 0$.
\end{remark}

\subsection{Stochastic Differential Equations}
In this section, we consider the case of a solution $X$ to the
following elliptic SDE on $\R^n$:
\begin{equation}\label{eq:SDEX}
  dX_t = b(t, X_t) dt + \sigma dB_t, \quad X_0 = x_0
\end{equation}
for $x_0 \in \R^n$ fixed, $b \colon [0, \infty) \times \R^n \to \R^n$
locally bounded, once differentiable in its first argument and twice
differentiable in the second with bounded first derivative, $\sigma$ a
real $n\times n$ matrix such that $\sigma \sigma^\top$ is
positive definite and $B$ a standard $n$-dimensional Brownian motion.

For a bounded function $f$ on $[0, \infty) \times \R^n$, twice continuously
differentiable, and $0 \leq t \leq u$ we define the two-parameter
semigroup associated to $X$ and $f$,
\begin{equation*}
  P_{t,u} f(x) = \E[f(u, X_u) | X_t = x].
\end{equation*}

If the coefficients of~\eqref{eq:SDEX} are time-homogeneous, we have
$P_{t,u} f = P_{0,u-t}f = P_{u-t}f$, where the latter is just the usual
(time-homogeneous) Markov semigroup.

The result in Corollary~\ref{corr:sigmakappa} below is known in the
time-homogeneous case, and can for example be deduced using the
arguments in~\autocite{joulin_new_2009,}. The refined formulation in
Proposition~\ref{prop:direct:Markov} and the inclusion of the
time-inhomogeneous setting is new.

\begin{proposition}\label{prop:direct:Markov}
For all $T, R, c > 0$ and $f \colon [0, \infty) \times
\R^n \to \R$, bounded and twice continuously
differentiable, we have
\begin{align*}
  & \PP \left(\int_0^T f(u, X_u)\,du - \E \int_0^T f(u, X_u)\,du \geq
    R\;; \int_0^T \abs{\sigma^\top \nabla R_t^T(X_t)}^2\,dt \leq c^2
  \right) \\
  & \quad \leq \exp\left(-\frac{R^2}{2 c^2}\right)
\end{align*}
with
\begin{equation*}
  R_t^T(x) = \int_t^T P_{t,u}f(x)\,du, \quad x \in \R^n, 0 \leq t \leq
  T.
\end{equation*}
Furthermore we have the decomposition
\begin{equation*}
  \int_0^t f(u, X_u)\,du = \int_0^t \nabla R_s^T(X_s) \cdot
  \sigma\,dBs - R_t^T(X_t), \quad 0 \leq t \leq T.
\end{equation*}
\end{proposition}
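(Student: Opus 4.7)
The plan is to construct an explicit martingale representation for $\int_0^T f(u, X_u)\,du$ by integrating the backward Kolmogorov equation and then applying the continuous local martingale inequality \eqref{eq:ddineq} used in the proof of Proposition~\ref{prop:direct:mart}. The auxiliary function is $R_t^T(x) = \int_t^T P_{t,u} f(x)\,du$, which is the natural ``remaining expected integral'' seen from state $x$ at time $t$.

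First I would verify that $R_t^T$ is $C^{1,2}$ on $[0,T] \times \R^n$ and satisfies the inhomogeneous backward Kolmogorov equation
\begin{equation*}
  (\partial_t + L_t) R_t^T(x) = -f(t, x),
\end{equation*}
where $L_t g(x) = b(t,x)\cdot \nabla g(x) + \tfrac12 \Tr(\sigma\sigma^\top \nabla^2 g(x))$ is the generator of $X$. This follows by differentiating under the integral (justified by the regularity of $b$ and $f$, which yields smoothness of $(t,x) \mapsto P_{t,u}f(x)$ via the Feynman–Kac / stochastic flow framework) and using $\partial_t P_{t,u} f = -L_t P_{t,u} f$ together with $P_{u,u}f(x) = f(u,x)$.

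Next I would apply Itô's formula to the process $t \mapsto R_t^T(X_t)$. The drift terms from $\partial_t R_t^T$ and $L_t R_t^T$ cancel by the previous step, leaving
\begin{equation*}
  dR_t^T(X_t) = -f(t, X_t)\,dt + \nabla R_t^T(X_t) \cdot \sigma \,dB_t.
\end{equation*}
Integrating from $0$ to $t$, using that $X_0 = x_0$ is deterministic so $R_0^T(x_0) = \E \int_0^T f(u, X_u)\,du$, and noting $R_T^T \equiv 0$, yields the stated decomposition (up to the deterministic constant $R_0^T(x_0)$ which cancels against the expectation) and in particular
\begin{equation*}
  \int_0^T f(u, X_u)\,du - \E \int_0^T f(u, X_u)\,du = \int_0^T \nabla R_s^T(X_s) \cdot \sigma\,dB_s.
\end{equation*}

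Finally, the right-hand side is a continuous local martingale with quadratic variation $\int_0^t \abs{\sigma^\top \nabla R_s^T(X_s)}^2\,ds$, so the concentration estimate \eqref{eq:ddineq} applied at time $T$ with $\sigma^2 = c^2$ gives the stated bound. The main technical obstacle is the smoothness and differentiation-under-the-integral argument needed to establish that $R_t^T \in C^{1,2}$ and satisfies the Kolmogorov equation; once this is in hand, the Itô calculation and the appeal to \eqref{eq:ddineq} are routine, and the stochastic integral is automatically a local martingale so no further integrability on $\nabla R_t^T$ is required beyond the event where its $L^2$ norm in time is bounded by $c^2$.
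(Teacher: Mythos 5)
Your proof is correct and takes essentially the same approach as the paper: the same auxiliary function $R_t^T$, the identity $(\partial_t + L)R_t^T = -f$ derived from the backward Kolmogorov equation, the Itô computation on $R_t^T(X_t)$, and the exponential inequality~\eqref{eq:ddineq} applied at time $T$. The only (immaterial) difference is that the paper first introduces the Doob martingale $M^T_t = \E^{\F_t}\int_0^T f(u,X_u)\,du$ and identifies its differential with the stochastic integral, whereas you integrate the Itô formula directly; since~\eqref{eq:ddineq} holds for continuous local martingales, your remark that no further integrability of $\nabla R_t^T$ is needed is accurate, and you correctly noted that the paper's stated decomposition omits the constant $R_0^T(x_0) = \E\int_0^T f(u,X_u)\,du$.
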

\begin{proof}
For $T > 0$ fixed define a martingale $M^T$ by
\begin{equation*}
  M^T_t = \E^{\F_t} \int_0^T f(u, X_u)\,du, \quad 0 \leq t \leq T
\end{equation*}
where $\F_t = \sigma(\{X_s\}_{s\leq t})$ is the natural filtration of
$X$.  Using Fubini's theorem with the bounded integrand $f$, the
fact that $f(u, X_u)$ is $\F_t$-measurable for all $u \leq t$ and the Markov property
we get
\begin{align}\label{eq:sde:1}
  M^T_t &= \int_0^T \E^{\F_t} f(u, X_u)\,du = \int_0^t f(u, X_u)\,du +
  \int_t^T \E^{\F_t} f(u, X_u)\,du \notag \\ &= \int_0^t f(u, X_u)\,du + R_t^T (X_t)
\end{align}
with
\begin{equation*}
  R_t^T(x) = \int_t^T P_{t,u} f(x) du.
\end{equation*}
By our regularity assumptions on $f$ and the coefficients
of~\eqref{eq:SDEX}, we have the Kolmogorov backward equation
$(\partial_t + L) P_{t,u} f = 0$ where $L$ denotes the
infinitesimal generator of $X$.
It follows from standard properties of the integral that
\begin{equation}\label{eq:sde:2}
  (\partial_t + L) R_t^T(x) = -P_{t,t} f(x) + \int_t^T (\partial_t + L) P_{t,u} f(x)
  du = -f(t, x), \quad t > 0, x \in \R^n.
\end{equation}

Using the Itô formula, we get
\begin{align*}
  dR^T_t(X_t)
  &= (\partial_t + L)R_t^T(X_t) \, dt + \nabla R_t^T(X_t) \cdot
                \sigma \, dB_t \\
  &= -f(t, X_t) \,dt + \nabla R_t^T(X_t) \cdot \sigma \, dB_t.
\end{align*}
By~\eqref{eq:sde:1} we also have
\begin{equation}
  dR^T_t(X_t) = -f(t, X_t)\,dt + dM^T_t
\end{equation}
so that we can identify
\begin{equation}
  dM^T_t =\nabla R_t^T(X_t) \cdot \sigma \, dB_t. \label{eq:sde:3}
\end{equation}
From elementary properties of the stochastic integral and Brownian motion
\begin{equation*}
  d\QV{M^T}_t = \abs{\sigma^\top \nabla R_t^T(X_t)}^2 \, dt.
\end{equation*}
In the proof of Proposition~\ref{prop:direct:mart} we saw that for any
continuous local martingale $M$ with $M_0 = 0$ we have
\begin{equation*}
  \PP(M_t \geq x, \QV{M}_t \leq y) \leq \exp\left(-\frac{x^2}{2 y}\right).
\end{equation*}

Now the result follows by applying this inequality to
$(M^T_T - M^T_0)$ and noting that
$M^T_T = \int_0^T f(u, X_u)\,du$ and
$M^T_0 = \E^{\F_0} \int_0^T f(u, X_u)\,du = \E \int_0^T f(u,
X_u)\,du$, by our assumption that $X_0$ is
concentrated on a single point $x_0 \in \R^n$.
\end{proof}

\begin{corollary}\label{corr:sigmakappa}
If there exist constants $c, \kappa > 0$ such that
\begin{equation*}
\abs{\sigma^\top \nabla P_{t,u} f(x)} \leq c e^{-\kappa (u-t)}
\quad x \in \R^n, 
0 \leq t \leq u
\end{equation*}
then we have the following Gaussian concentration inequality for all
$R > 0, T > 0$:
\begin{equation*}
\PP\left(\frac1T \int_0^T f(u, X_u)\,du - \E\left[ \frac1T
\int_0^T f(u, X_u)\,du\right] \geq R\right) \leq
\exp\left(-\frac{\kappa^2 R^2 T}{2 c^2}\right).
\end{equation*}

\end{corollary}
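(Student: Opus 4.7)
The plan is to apply Proposition~\ref{prop:direct:Markov} directly after deriving a deterministic pathwise bound on $\int_0^T \abs{\sigma^\top \nabla R_t^T(X_t)}^2\,dt$ from the exponential gradient hypothesis.

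First I would differentiate under the integral defining $R_t^T$. Since $R_t^T(x) = \int_t^T P_{t,u}f(x)\,du$ and the bound $\abs{\sigma^\top \nabla P_{t,u}f(x)} \leq c e^{-\kappa(u-t)}$ is integrable over $u$, together with the regularity of $f$ and the SDE coefficients assumed in Section~\ref{sec:direct}, this exchange is justified and yields
\begin{equation*}
\sigma^\top \nabla R_t^T(x) = \int_t^T \sigma^\top \nabla P_{t,u}f(x)\,du.
\end{equation*}

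Next I would apply the triangle inequality and the hypothesis to get a pointwise deterministic bound
\begin{equation*}
\abs{\sigma^\top \nabla R_t^T(x)} \leq \int_t^T c\, e^{-\kappa(u-t)}\,du = \frac{c}{\kappa}\bigl(1 - e^{-\kappa(T-t)}\bigr) \leq \frac{c}{\kappa},
\end{equation*}
independent of $x$. Integrating the squared bound in $t$ gives $\int_0^T \abs{\sigma^\top \nabla R_t^T(X_t)}^2\,dt \leq c^2 T/\kappa^2$ almost surely, so the second event in Proposition~\ref{prop:direct:Markov} holds with the constant $c^2 T/\kappa^2$ in place of the ``$c^2$'' there.

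Finally I would invoke Proposition~\ref{prop:direct:Markov} with threshold $RT$ (since we are normalizing the functional by $1/T$), obtaining
\begin{equation*}
\PP\left(\frac{1}{T}\int_0^T f(u,X_u)\,du - \E\left[\frac{1}{T}\int_0^T f(u,X_u)\,du\right] \geq R\right) \leq \exp\!\left(-\frac{(RT)^2}{2 c^2 T/\kappa^2}\right),
\end{equation*}
which simplifies to the claimed bound. There is no real obstacle here; the only subtle point is justifying the interchange of gradient and integral, but this is immediate from the uniform-in-$x$ exponential bound and dominated convergence.
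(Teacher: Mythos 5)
Your proposal is correct and follows essentially the same route as the paper's own proof: differentiate under the integral (justified by the regularity assumptions on the SDE coefficients and the Kolmogorov backward equation), bound $\abs{\sigma^\top \nabla R_t^T(x)} \leq c/\kappa$ via the triangle inequality and the exponential hypothesis, and then apply Proposition~\ref{prop:direct:Markov} at level $RT$ with the deterministic variance bound $c^2 T/\kappa^2$. The arithmetic $(RT)^2/(2c^2T/\kappa^2) = \kappa^2 R^2 T/(2c^2)$ checks out, so nothing is missing.
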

\begin{proof}
  
From our assumption we can estimate for all $x \in \R^n, 0 \leq t \leq
T$
\begin{align*}
\abs{\sigma^\top \nabla R_t^T (x)} & \leq \int_t^T\abs{\sigma^\top
                                      \nabla P_{t,u} f(x)} \, du \leq c
  \int_t^T e^{-\kappa (t-u)} du \leq \frac{c}{\kappa}
\end{align*}
where the regularity assumptions on the coefficients of~\eqref{eq:SDEX}
ensure well-posedness of the Kolmogorov backward equation and by
extension sufficient smoothness to differentiate under the integral
for the first inequality.

Now the result follows directly from
Proposition~\ref{prop:direct:Markov} since
\begin{multline*}
  \PP \left(\frac1T \int_0^T f(u, X_u)\,du - \E\left[ \frac1T
      \int_0^T f(u, X_u)\,du\right] \geq R\right)
  \\ = \PP \left(\int_0^T f(u, X_u)\,du - \E\left[
\int_0^T f(u, X_u)\,du\right] \geq RT\;; \int_0^T \abs{\sigma^\top \nabla
R_t^T(X_t)}^2\,dt \leq \frac{c^2}{\kappa^2} T
  \right).
\end{multline*}
\end{proof}

\begin{remark}
  The approach presented here, based on stochastic calculus, requires
  the existence of an Itô-type formula and the well-posedness of the
  Kolmogorov backward equation. The approach essentially operates on
  the level of the individual trajectories of $X$, and in return
  yields an explicit martingale representation for $\int_0^T f(u, X_u)\,du$. In contrast,
  the results on Markov processes from Proposition~\ref{prop:Markov}
  rely on knowing the quadratic variation of a certain auxiliary
  martingale, for which it is sufficient to have a characterization of
  $X$ in terms of a martingale problem.
\end{remark}

\begin{remark}
  Regarding the law of $X_0$, in contrast to the analytic approaches
  present in the literature, the case where $X_0$ is concentrated on a
  single point is the most natural for our probabilistic approach. In
  particular, this case is outside of the scope of many existing
  results in the literature such as
  ~\autocite{guillin_transportation-information_2009,gao_bernstein-type_2014},
  which require the law of $X_0$ to be
  absolutely continuous with respect to a stationary measure of the
  process. In Section~\ref{subsec:martineq} we will see how to deal
  with more general initial laws in the context of the approach
  presented here.
\end{remark}

\begin{remark}
  We avoid the question of stationarity altogether by deriving
  concentration inequalities of $\tfrac1T \int_0^T f(u, X_u)\,du$
  centered around its expectation, as opposed to the usual formulation
  which shows concentration around $\int f d\mu$ for a stationary
  measure $\mu$. In Section~\ref{example:sde} we will see an example
  of how to derive concentration inequalities around $\int f d\mu$.
\end{remark}

\begin{remark}
  The essential ingredient in the proof is equation~\eqref{eq:sde:2},
  which states that $(t, x) \mapsto \int_t^T P_{t,u}f(x)\,du$ is a
  solution to the following PDE in $g$ on $[0, T] \times \R^n$:
  \begin{align*}
    L g + \partial_t g &= -f.
  \end{align*}
  This observation is not new and the solution $g$ notably features
  prominently in the classic book on martingale problems by Stroock
  and Varadhan
  \autocite{stroock_multidimensional_2006,}. However, the
  application to additive functionals seems to be new, if not entirely
  surprising. Indeed, a popular approach to additive functionals of Markov
  processes
  \autocite{cattiaux_central_2012,guillin_transportation-information_2009-1,}
  involves the solution $g$ to the Poisson equation on $\R^n$,
  \begin{align*}
    Lg &= -f.
  \end{align*}
  Under certain strong ergodicity conditions, requiring in particular
  the stationarity of $X$, a solution to the Poisson equation can be
  shown to exist and is then given by the well-known resolvent formula
  as $x \mapsto \int_0^\infty P_{0,u} f(x)\,du$. In contrast, the
  definition of $R_t^T(x) = \int_t^T P_{t,u}f(x)\,du$ makes sense in a
  very general setting.

\end{remark}

\subsection{Discrete Time Markov Process}\label{sec:discrete}
In this section, we consider the case of a discrete-time Markov chain
in order to build some probabilistic intuition for our assumptions and
to highlight some issues that appear in the presence of jumps. For
examples of processes satisfying the conditions below, see Section 4.1
below or the
articles~\autocite{ollivier_ricci_2009,joulin_curvature_2010,}.

Consider a discrete-time Markov Process $(X_t)_{t \in \mathbb{N}}$
taking values in $\R$ with $X_0 = x_0 \in \R$. Fix a measurable
function $f: \mathbb{N} \times \R \to \R$ and define the associated
two-parameter semigroup $P_{t,u} f$ by
\begin{equation*}
	P_{t,u} f(x) = \E[f(u, X_u) | X_t = x], \quad 0 \leq t \leq u; u, t \in \mathbb{N}.
\end{equation*}
Let
\begin{equation*}
S_t = \sum_{u=1}^{t} f(u, X_u), \quad t \geq 1.
\end{equation*}

\begin{proposition}\label{prop:discrete}
  Assume that there exist positive processes $C_t, \kappa_t, \sigma_t$
  with $\kappa_t < 1$ such that for each $t$, $C_t$, $\kappa_t$ and
  $\sigma_t$ are bounded, $\F_{t-1}$-measurable random variables and
  such that
\begin{align*}
\abs{X_t - X_{t-1}} &\leq C_t, \quad t \geq 1, \\
\abs{P_{s,t} f(x) - P_{s,t} f(y)} &\leq \sigma_s\, (1-\kappa_s)^{t-s}\,
  \abs{x-y}, \quad x,y \in \R^n, 0 \leq s \leq t.
\end{align*}

Then for all $T, R, a > 0$ we have
\begin{align*}
  \PP \left(S_T - \E S_T \geq R \;; \sum_{t=1}^T \frac{\sigma_t^2 \, C_t^2}{\kappa_t^2} \leq a^2\right)
  \leq \exp\left(-\frac{R^2}{8 a^2}\right).
\end{align*}
\end{proposition}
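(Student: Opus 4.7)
The plan is to mirror the SDE argument from Proposition~\ref{prop:direct:Markov}: construct the Doob martingale $M^T_t = \E^{\F_t} S_T$, decompose it via the Markov property, bound its increments using the contraction hypothesis, and conclude with a predictable Hoeffding inequality.

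First, splitting $S_T$ at time $t$ and applying the Markov property to the terms with $u > t$ yields
\[
  M^T_t = S_t + R^T_t(X_t), \qquad R^T_t(x) := \sum_{u=t+1}^T P_{t,u}f(x),
\]
with the convention $R^T_T \equiv 0$. In particular $M^T_T = S_T$ and, since $X_0 = x_0$ is deterministic, $M^T_0 = \E S_T$, so that $S_T - \E S_T = \sum_{t=1}^T \Delta M^T_t$.

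The key step is an almost sure, $\F_{t-1}$-measurable bound on $\Delta M^T_t$. Writing $H_t(x) := f(t,x) + R^T_t(x) = \sum_{u=t}^T P_{t,u}f(x)$, one has $\Delta M^T_t = H_t(X_t) - \E^{\F_{t-1}} H_t(X_t)$. Applying the contraction hypothesis termwise with $s = t$ and summing the geometric series $\sum_{k \geq 0}(1-\kappa_t)^k = 1/\kappa_t$ shows that $H_t$ is $(\sigma_t/\kappa_t)$-Lipschitz. Combining this with $\abs{X_t - X_{t-1}} \leq C_t$ and the triangle inequality (noting that $H_t(X_{t-1}) \in \F_{t-1}$) gives
\[
  \abs{\Delta M^T_t} \leq \frac{2 \sigma_t C_t}{\kappa_t}.
\]

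The remainder is a standard exponential supermartingale argument. Since $\sigma_t, \kappa_t, C_t$ are $\F_{t-1}$-measurable, Hoeffding's lemma applied conditionally yields
\[
  \E^{\F_{t-1}} \exp\bigl(\lambda \Delta M^T_t\bigr) \leq \exp\biggl( 2 \lambda^2 \frac{\sigma_t^2 C_t^2}{\kappa_t^2} \biggr),
\]
so that $N_t := \exp\bigl( \lambda (M^T_t - M^T_0) - 2 \lambda^2 \sum_{s=1}^t \sigma_s^2 C_s^2/\kappa_s^2 \bigr)$ is a nonnegative supermartingale with $\E N_0 = 1$. Intersecting with the event $\{\sum_t \sigma_t^2 C_t^2/\kappa_t^2 \leq a^2\}$ and running the Chebyshev-and-optimize step as in the proof of Proposition~\ref{prop:direct:mart} produces the exponent $-R^2/(8a^2)$. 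The main subtle point is predictability: one must keep $\sigma_t, \kappa_t, C_t$ on the $\F_{t-1}$ side of every conditional expectation and make sense of $R^T_t$ under minimal integrability (first reducing to $f$ bounded is harmless). The factor $8$ rather than $2$ reflects the crude sup-norm bound on $\abs{\Delta M^T_t}$ used above in place of a sharper conditional-range estimate.
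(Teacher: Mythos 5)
Your proof is correct and follows essentially the same route as the paper's: the same Doob martingale $M^T_t = \E^{\F_t} S_T$, the same Markov-property decomposition (your termwise bound on $H_t(x)=\sum_{u=t}^T P_{t,u}f(x)$ is just a regrouped version of the paper's bound on $\E^{\F_t}f(u,X_u)-\E^{\F_{t-1}}f(u,X_u)$), the same increment estimate $\abs{\Delta M^T_t}\leq 2\sigma_t C_t/\kappa_t$, and the same constant $-R^2/(8a^2)$ after optimizing. The only difference is that where the paper invokes the predictable extension of Azuma--Hoeffding (Theorem 3.4 of the cited Bercu--Delyon--Rio reference) as a black box, you rederive that inequality self-containedly via a conditional Hoeffding lemma and an exponential supermartingale, which is a legitimate (and arguably more transparent) substitute for the citation.
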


\begin{proof}
Fix $T > 0$ and define a martingale $M^T$ by
\begin{align*}
  M^T_t = \E^{\F_t} S_T = \sum_{u=1}^t \E^{\F_t} f(u, X_u)
\end{align*}
so that
\begin{align*}
  M^T_t - M^T_{t-1} 
	&= \sum_{u=1}^T \left(\E^{\F_t} f(u, X_u) - \E^{\F_{t-1}} f(u, X_u)\right) 
\end{align*}

We have by the Markov property, adaptedness of $X_t$ and our assumptions
\begin{align*}
  \lefteqn{\E^{\F_t} f(u, X_u) - \E^{\F_{t-1}} f(u, X_u)}\quad \\
  &= P_{t,u} f(X_t) - \E^{\F_{t-1}} P_t,u f(X_t) \\
  &= P_{t,u} f(X_t) - P_{t,u} f(X_{t-1}) - \E^{\F_{t-1}}\left[P_{t,u}
    f(X_t) - P_{t,u} f(X_{t-1})\right] \\
  &\leq \sigma_t (1-\kappa_t)^{u-t} \left(\Abs{X_t - X_{t-1}} +
    \E^{F_{t-1}}\Abs{X_t - X_{t-1}}\right) \\
  &\leq 2 \sigma_t (1-\kappa_t)^{u-t} C_t
\end{align*}
This shows that the increments of the martingale $M^T_t$ are uniformly bounded by a
predictable process independent of $T$:
\begin{align*}
  M^T_t - M^T_{t-1} &\leq 2\,C_t\,\sigma_t \sum_{u=t}^T  (1-\kappa_t)^{u-t}
  \leq \frac{2\,C_t\,\sigma_t}{\kappa_t}.
\end{align*}

An extension of the classical Azuma-Hoeffding inequality (see
for example Theorem 3.4 in~\autocite{bercu_concentration_2015,}) states
that for any square-integrable martingale $M$ with $M_0 = 0$ and such
that $M_t - M_{t-1} \leq D_t$ for bounded, $\F_{t-1}$-measurable
random variables $D_t$
we have the inequality
\begin{align*}
  \PP(M_T \geq x \;; \sum_{t=1}^T D_t^2 \leq y) \leq \exp\left(-\frac{x^2}{2y}\right).
\end{align*}

Since we assumed $X_0$ to be deterministic, we have $M^T_T - M^T_0 =
S_T - \E^{\F_0} S_T = S_T - \E S_T$. By applying the preceding
martingale inequality to $M^T$ we get finally
\begin{align*}
  \PP \left(S_T - \E S_T \geq R \;; \sum_{t=1}^T \frac{\sigma_t^2\, C_t^2}{\kappa_t^2} \leq a^2\right)
  \leq \exp\left(-\frac{R^2}{8 a^2}\right).
\end{align*}

\end{proof}



\section{Martingale and concentration inequalities}\label{sec:main}
\subsection{Square integrable processes}\label{sec:general}
Consider a filtered probability space
$(\Omega, \mathcal{F}, \PP, (\mathcal{F}_t)_{t\geq 0})$ satisfying the usual
conditions from the general theory of semimartingales, meaning that $\F$ is
$\PP$-complete, $\F_0$ contains all $\PP$-null sets in $\F$ and $\F_t$ is
right-continuous. In this section $(X_t)_{t\geq0}$ will denote a real-valued
stochastic process adapted to $\F_t$, square-integrable in the sense
that $\E X_t^2 < \infty$ for all $t \geq 0$.

Define an adapted continuous finite-variation process $(S_t)_{t\geq 0}$ by
\begin{equation*}
  S_t = \int_0^t X_u\, du.
\end{equation*}
Fix $T > 0$ and define a martingale $(M^T_t)_{t\geq 0}$ by
\begin{equation*}
  M^T_t = \E^{\F_t} S_T.
\end{equation*}
By the boundedness and adaptedness assumptions on $X$, $M^T$ is a square integrable
martingale (by Doob's maximal inequality) which we can and will choose to be right-continuous with left limits so that the predictable quadratic variation $\pQV{M^T}$ and the jumps $(\Delta
M^T_t)_{t\geq0}$ are well-defined.


As illustrated in Section~\ref{sec:direct}, our goal is to derive
concentration inequalities for $S_T$ from concentration inequalities
for the martingale $M^T$ by using the relation $S_T = M^T_T$. In this
subsection we take the first step towards that goal by characterizing
$M^T$ (and thus $S_T$) based on properties of the underlying process
$X$, using a family of auxiliary martingales. The next two subsections
will give explicit expressions for the auxiliary martingales in a
martingale and Markov process context. The final subsection will
complete the approach by recalling known concentration inequalities
for martingales in terms of their (predictable) quadratic variations
and jumps.

Define the family of auxiliary martingales
$(Z^u)_{ u\geq0}$ by
\begin{equation*}
  Z^u_t = \E^{\F_t} X_u,
\end{equation*}
which will be chosen right-continuous with left limits. Each $Z^u$ is square
integrable so that the predictable quadratic covariation $\pQV{Z^u, Z^v}$ is
well-defined.

Formally the next result is just a consequence of the (bi)linearity of
the integral and (predictable) quadratic variation. The proof shows
that the formal calculation is justified under our assumption that $X$
is square integrable. The main interest of the result lies in the fact
that we can often find explicit expressions for $\pQV{Z^u, Z^v}$ and
$\Delta Z^u$, as we will see in the next two sections.

\begin{theorem}\label{lemma:1}\label{thm:main}\label{theorem:main}
For any $T > 0$ we have 
\begin{align}
M^T_t &= \int_0^T Z^u_t du, \label{eq:M} \\
\Delta M_t^T &= \int_0^T \Delta Z^u_t du, \label{eq:Delta} \\
\QV{M^T}_t &= \int_0^T \int_0^T \QV{Z^u, Z^v}_t \,du\, dv, \label{eq:QV} \\
\pQV{M^T}_t &= \int_0^T \int_0^T \pQV{Z^u, Z^v}_t \,du\, dv.
\label{eq:pQV}
\end{align}
\end{theorem}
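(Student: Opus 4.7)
The plan is to obtain each of the four identities first formally from linearity (or bilinearity) of the underlying operation, and then to justify the exchange of integration with conditional expectation, left limit, or (predictable) quadratic covariation by approximating the $du$-integral with Riemann sums and invoking an $L^2$-convergence argument for the associated square integrable martingales.

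For \eqref{eq:M}, I would apply the conditional Fubini theorem to $M^T_t = \E^{\F_t}\int_0^T X_u\,du$, using that $X_u \in L^2(\PP)$ and $[0,T]$ has finite Lebesgue measure to supply an integrable majorant. From \eqref{eq:M}, identity \eqref{eq:Delta} then follows by writing $\Delta M^T_t = M^T_t - M^T_{t-}$ and exchanging the left limit in $t$ with the $du$-integral by dominated convergence; a uniform majorant in $s\leq t$ is provided by $\sup_{s\leq t}|Z^u_s|$, whose $L^2$-norm is controlled by $\E X_u^2$ via Doob's maximal inequality and is therefore locally integrable in $u$.

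For \eqref{eq:QV} and \eqref{eq:pQV}, I would fix $T > 0$, partition $[0,T]$ with $u_i^n = iT/n$, and introduce the square integrable approximating martingales
\[
M^{T,n}_t = \frac{T}{n}\sum_{i=1}^n Z^{u_i^n}_t.
\]
Bilinearity of $\QV{\cdot,\cdot}$ and $\pQV{\cdot,\cdot}$ on finite sums of square integrable martingales yields
\[
\QV{M^{T,n}}_t = \frac{T^2}{n^2}\sum_{i,j=1}^n \QV{Z^{u_i^n}, Z^{u_j^n}}_t,
\]
with the analogous identity for $\pQV{M^{T,n}}_t$. By \eqref{eq:M} and Doob's $L^2$-maximal inequality, $\E\sup_{s\leq t}|M^T_s - M^{T,n}_s|^2 \to 0$, which gives $\E\QV{M^T - M^{T,n}}_t \to 0$. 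Combining this with the Kunita--Watanabe inequality and Cauchy--Schwarz produces $L^1$-convergence of $\QV{M^{T,n}}_t$ to $\QV{M^T}_t$, and passing to the compensator transports the same convergence to the predictable quadratic variations.

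The main obstacle I expect is identifying the limit of the double Riemann sum with the double integral $\int_0^T\int_0^T \QV{Z^u,Z^v}_t\,du\,dv$ without any a priori regularity of $(u,v)\mapsto \QV{Z^u,Z^v}_t$. The cleanest workaround is to pin both sides down as the same $L^1$-limit: the left-hand side equals $\QV{M^T}_t$ by the martingale argument above, while the right-hand side is, by definition, a Lebesgue integral of an appropriately measurable (bi-)process and agrees with the Riemann sum limit by a standard approximation argument; some care is needed to set up the measurability of $(u,v)\mapsto \QV{Z^u,Z^v}_t$ (e.g.\ by working with càdlàg versions) so that the double integral is well-defined.
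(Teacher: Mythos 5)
Your treatment of \eqref{eq:M} and \eqref{eq:Delta} is sound and essentially matches the paper's (conditional Fubini, then exchange of the left limit with the $du$-integral; the paper uses uniform integrability and an $L^1$-limit argument where you use a Doob maximal majorant, and both work). The genuine gap is in your treatment of \eqref{eq:QV}--\eqref{eq:pQV}: the claimed convergence $\E\sup_{s\leq t}\abs{M^T_s - M^{T,n}_s}^2 \to 0$ of the Riemann-sum martingales is false in general. The setting assumes only that $X$ is adapted, jointly measurable and square integrable; no continuity in $u$ of $u \mapsto X_u$, hence of $u \mapsto Z^u_t$, is available. Take $X_u = \one_{\mathbb{Q}}(u)\,\eta$ with $\eta \in L^2(\F_0)$, $\eta \neq 0$, and $T$ rational: then $S_T = 0$, so $M^T \equiv 0$ and $Z^u_t = \one_{\mathbb{Q}}(u)\,\eta$, while every partition point $u_i^n = iT/n$ is rational, so $M^{T,n}_t = T\eta$ for all $n$. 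Riemann sums of a merely measurable integrand need not converge to its Lebesgue integral, and Doob's inequality cannot create this convergence --- it only transfers convergence at the terminal time to the running supremum. The same obstruction defeats the workaround you sketch for the ``main obstacle'' you flag at the end: there is no ``standard approximation argument'' identifying the limit of equispaced double Riemann sums of $(u,v) \mapsto \QV{Z^u, Z^v}_t$ with the double Lebesgue integral, for exactly the same reason. (Your strategy could be salvaged by replacing point-evaluation Riemann sums with simple-function approximations of $u \mapsto Z^u$ in the Bochner sense, together with Kunita--Watanabe estimates for the approximating sequence, but that is a substantially different and heavier argument than what you wrote.)

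The paper avoids approximation altogether and this is worth internalizing: it verifies directly that the candidate $A_t = \int_0^T\int_0^T \QV{Z^u,Z^v}_t\,du\,dv$ satisfies the defining characterization of $\QV{M^T}$ --- adapted, c\`adl\`ag with paths of finite variation on compacts (via polarization), $\Delta A = (\Delta M^T)^2$ (via the argument already used for \eqref{eq:Delta}), and $(M^T)^2 - A$ a martingale, this last point checked through the optional stopping criterion plus Fubini, using the preliminary uniform $L^2$/$L^1$ bounds and the fact that each $Z^u Z^v - \QV{Z^u,Z^v}$ is a uniformly integrable martingale. Identity \eqref{eq:pQV} then follows by recognizing $\int_0^T\int_0^T \pQV{Z^u,Z^v}_t\,du\,dv$ as the predictable compensator of $A$ by the same optional-stopping argument. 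That route requires only measure-theoretic Fubini and never any regularity in $u$, which is precisely what your Riemann-sum strategy is missing.
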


\begin{proof}
  Since the theorem only involves values of $X_t$ and $Z^u_t$ for
  $u,t \leq T$, we can assume without loss of generality that
  $Z^u_t = Z^{u\wedge T}_{t\wedge T}$ and $X_t = X_{t\wedge T}$. We
  proceed with some preliminary estimates.  First, the family
  $(Z^u_\tau)_{u,\tau}$, $u \geq 0$, $\tau$ stopping time, is
  uniformly bounded in $L^2$ since, by optional sampling and Jensens's inequality,
  \begin{equation*}
    \E (Z^u_\tau)^2 = \E (Z^{u\wedge T}_{\tau\wedge T})^2 = \E
    (\E^{\F_{\tau \wedge T}} \E^{\F_T} X_{u\wedge T})^2 \leq \E X_{u \wedge T}^2 \leq
    \sup_{0\leq t\leq T} \E X_t^2 < \infty.
  \end{equation*}
  It follows that $(\QV{Z^u, Z^v}_\tau)_{u,v,\tau}$ and
  $(\pQV{Z^u, Z^v}_\tau)_{u,v,\tau}$ are uniformly bounded in $L^1$
  since, by elementary properties of the quadratic variation,
  Cauchy-Schwartz and uniform integrability of the martingales $Z^u$,
  \begin{equation*}
    \E \Abs{\QV{Z^u, Z^v}_\tau} \leq \E (\QV{Z^u}_\tau \, \QV{Z^v}_\tau)^{1/2} \leq (\E (Z^u_\tau)^2)^{1/2} (\E (Z^v_\tau)^2)^{1/2}.
  \end{equation*}
  Furthermore the local martingale $Z^u Z^v - \QV{Z^u, Z^v}$ is actually a
  uniformly integrable martingale since
  \begin{equation*}
    \E \Abs{Z^u_\infty Z^v_\infty - \QV{Z^u, Z^v}_\infty}
    = \E \Abs{Z^u_T Z^v_T - \QV{Z^u, Z^v}_T}
    < \infty.
  \end{equation*}
  The same holds for $Z^u Z^v -
  \pQV{Z^u, Z^v}$ by an identical argument.
  
We proceed to show \eqref{eq:M}, i.e. for fixed $T > 0$ and all $t
\geq 0$
\begin{equation}\label{eq:M_explicitt}
\E^{\F_t} \int_0^T X_u  \, du = \int_0^T Z^u_t \, du.
\end{equation}
Since $(Z^u_t)_{u\geq0}$ is uniformly bounded in $L^1$ we get by Fubini's theorem
that for arbitrary $G \in \F_t$
\begin{equation*}
  \E \left[\int_0^T Z^u_t
    \, du ; G\right] = \int_0^T \E[Z^u_t; G] du = \int_0^T \E[X_u; G]
  du = \E \left[ \int_0^T X_u  \, du ; G \right]
\end{equation*}
which is equivalent to~\eqref{eq:M_explicitt}.

To show~\eqref{eq:Delta} note that since $(Z^u_t)_{t\geq0}$ is
uniformly integrable we have $Z^u_{t-1/n} \to Z^u_{t-}$ in
$L^1$. Since $(Z^u_t)_{u,t\geq0}$ is uniformly bounded in $L^1$ we can
use Fubini's theorem and dominated convergence to show that
\begin{equation*}
  M^T_{t-1/n} = \int_0^T Z^u_{t-1/n} \,du \to \int_0^T Z^u_{t-}\,du \text{ in } L^1.
\end{equation*}
Since the almost sure limit of the left-hand side is $M^T_{t-}$ by
definition and the almost sure and $L^1$ limits coincide when they
both exists, this proves~\eqref{eq:Delta}.

To show \eqref{eq:QV}, we are going to use the
characterisation of $\QV{M^T}$ as the unique adapted and \emph{càdlàg}
process $A$ with paths of finite variation on compacts such
that $(M^T)^2 - A$ is a local martingale and
$\Delta A = (\Delta M^T)^2, A_0 = (M^T_0)^2$.

Let
\begin{equation*}
  A_t = \int_0^T \int_0^T \QV{Z^u, Z^v}_t \, du \, dv.
\end{equation*}
Clearly $A_0 = (M^T_0)^2$ since $\QV{Z^u, Z^v}_0 = Z^u_0 Z^v_0$. Since sums and limits preserve measurability, $A$ is adapted since all the
integrands $\QV{Z^u, Z^v}$ are. By polarization
\begin{equation*}
  A_t = \frac14 \int_0^T \int_0^T \QV{Z^u + Z^v}_t \, du \, dv - \frac14
  \int_0^T \int_0^T \QV{Z^u - Z^v}_t \, du \, dv
\end{equation*}
so that the paths of $A_t$ can be written as a difference between two
increasing functions and are thus of finite variation on compacts.
Futhermore, since
$0 \leq \QV{Z^u \pm Z^v}_t \leq \QV{Z^u \pm Z^v}_{u \vee v}$ for all
$t \geq 0$ and $\QV{Z^u \pm Z^v}_t$ is increasing in $t$, pathwise
left limits and right continuity follow from the monotone convergence
theorem applied to the preceding decomposition.

Since $\QV{Z^u, Z^v}_t$ is bounded in $L^1$, uniformly in $u, v, t$,
and uniformly integrable in $t$, it follows as in the proof
of~\eqref{eq:Delta} that
\begin{equation*}
  A_{t-} = \int_0^T \int_0^T \QV{Z^u, Z^v}_{t-}\,du\,dv
\end{equation*}
so that
\begin{equation*}
  \Delta A_t = \int_0^T \int_0^T \Delta\QV{Z^u, Z^v}_{t}\,du\,dv
  = \int_0^T \Delta Z^u_t \,du \int_0^T \Delta Z^v_t \,dv
  = \left(\Delta M^T_t\right)^2.
\end{equation*}
To show the martingale part of the characterization, recall that an
adapted càdlàg process $Y$ is a uniformly integrable martingale if for
all stopping times $\tau$, $\E\Abs{Y}_\tau < \infty$ and $\E Y_\tau = 0$.
Now, for any
stopping time $\tau$, by our preliminary estimates
above we get the integrability so that we can use Fubini's theorem
and optional
stopping to obtain
\begin{equation*}
  \E \left[\left(M^T_\tau\right)^2 - A_\tau\right] = \int_0^T \int_0^T \E\left(Z^u_\tau
  Z^v_\tau - \QV{Z^u, Z^v}_\tau\right)\,du\,dv = 0
\end{equation*}
which finishes the proof that $\QV{M^T}_t = A_t$.

It remains to identify $\pQV{M^T}$ as the compensator of $\QV{M^T}$,
i.e.\ the unique finite variation predictable process $\tilde{A}$
such that $A - \tilde{A}$ is a local martingale. Let
\begin{equation*}
  \tilde{A}_t = \int_0^T \int_0^T \pQV{Z^u, Z^v}_t\,du\,dv.
\end{equation*}
Then $\tilde{A}$ is predictable since measurability is preserved by
sums and limits, and thus by integrals. Using polarization, we
see that $\tilde{A}$ is the difference between two increasing
processes and therefore of finite variation. Finally, for all stopping
times $\tau$, we again get from our preliminary estimates that
$\E\abs{A_\tau - \tilde{A}_\tau} < \infty$ and by optional stopping
\begin{equation*}
  \E [A_\tau - \tilde{A}_\tau] = \int_0^T \int_0^T \E\left(\QV{Z^u,
      Z^v}_\tau - \pQV{Z^u, Z^v}_\tau\right)\,du\,dv = 0
\end{equation*}
which shows that $A - \tilde{A}$ is a martingale and thereby concludes
the proof of the theorem.

\end{proof}

\begin{proposition}
    If there exist real-valued processes $\sigma_t$, $J_t$ and a constant
  $\kappa \geq 0$ such that for all $ 0 \leq t \leq u \leq T$
  \begin{align*}
    d\pQV{Z^u}_t &\leq \sigma_t^2 e^{-2\kappa (u-t)} dt, \\
    \abs{\Delta Z^u_t} & \leq \abs{\Delta J_t} e^{-\kappa (u-t)}
  \end{align*}
  then
  \begin{align*}
    	\pQV{M^T}_t &
    \leq \int_0^t \frac{\sigma_s^2}{\kappa^2} \left(1 - e^{-\kappa (T-s)}\right)^2 ds, \\
    \abs{\Delta M^T_t} & \leq \frac{\abs{\Delta J_t}}{\kappa} \left(1 - e^{-\kappa (T-t)}\right)
  \end{align*}
  where the case $\kappa = 0$ is to be understood in the sense of the limit as
  $\kappa \to 0$.
\end{proposition}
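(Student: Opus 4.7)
The plan is to invoke Theorem~\ref{theorem:main} to represent $\Delta M^T_t$ and $\pQV{M^T}_t$ as integrals involving $\Delta Z^u_t$ and $d\pQV{Z^u, Z^v}_t$, and then reduce everything to the two hypotheses on the auxiliary martingales $Z^u$. The key structural observation, used for both bounds, is that $Z^u_t = \E^{\F_t} X_u = X_u$ for every $t \geq u$, since $X_u$ is $\F_u$-measurable. Hence each $Z^u$ is constant on $[u,\infty)$, which means $\Delta Z^u_t = 0$ whenever $t > u$ and $d\pQV{Z^u, Z^v}_s = 0$ whenever $s > u \wedge v$.

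First I would dispatch the jump bound. From \eqref{eq:Delta} and the above observation,
\[
\abs{\Delta M^T_t} \leq \int_0^T \abs{\Delta Z^u_t}\,du = \int_t^T \abs{\Delta Z^u_t}\,du \leq \abs{\Delta J_t}\int_t^T e^{-\kappa(u-t)}\,du = \frac{\abs{\Delta J_t}}{\kappa}\bigl(1-e^{-\kappa(T-t)}\bigr),
\]
which is the stated inequality, with the case $\kappa = 0$ recovered as the obvious limit $(1-e^{-\kappa(T-t)})/\kappa \to T-t$.

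For the predictable quadratic variation I would work differentially. From \eqref{eq:pQV}, and using the vanishing of $d\pQV{Z^u, Z^v}_s$ for $u \wedge v \leq s$,
\[
d\pQV{M^T}_s = \int_s^T\!\!\int_s^T d\pQV{Z^u, Z^v}_s\,du\,dv.
\]
The Kunita-Watanabe inequality combined with the assumed absolute continuity of $d\pQV{Z^u}_s$ with respect to $ds$ yields
\[
\abs{d\pQV{Z^u, Z^v}_s} \leq \bigl(d\pQV{Z^u}_s\bigr)^{1/2}\bigl(d\pQV{Z^v}_s\bigr)^{1/2} \leq \sigma_s^2\, e^{-\kappa(u-s)} e^{-\kappa(v-s)}\,ds.
\]
The resulting double integral over $[s,T]^2$ factors into a product of two identical one-dimensional integrals, each equal to $(1-e^{-\kappa(T-s)})/\kappa$, so that $d\pQV{M^T}_s \leq \sigma_s^2 \kappa^{-2}(1-e^{-\kappa(T-s)})^2\,ds$. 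Integrating from $0$ to $t$ gives the announced bound, and the limit $\kappa \to 0$ is again immediate.

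The main obstacle is rigorously justifying the interchange of the $u,v$-integrals with the pathwise differential $d\pQV{Z^u, Z^v}_s$, since the latter is a signed measure. The cleanest workaround, mirroring the argument in the proof of Theorem~\ref{theorem:main}, is to pass through the polarization identity $2\pQV{Z^u, Z^v} = \pQV{Z^u + Z^v} - \pQV{Z^u} - \pQV{Z^v}$, which reduces the interchange to Fubini for nonnegative measures, and then to invoke the Kunita-Watanabe bound on the absolutely continuous densities. The absolute continuity hypothesis on $d\pQV{Z^u}_s$ guarantees that all dominating measures can be taken to be Lebesgue, which removes any measure-theoretic subtlety.
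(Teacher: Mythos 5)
Your proof is correct, and its overall skeleton matches the paper's: both reduce everything to Theorem~\ref{thm:main} via~\eqref{eq:Delta} and~\eqref{eq:pQV}, both use that $Z^u$ is constant on $[u,\infty)$, and the jump bound is handled identically. Where you genuinely diverge is in how the cross-variation bound is obtained. You invoke the Kunita--Watanabe inequality in differential (measure) form, $\abs{d\pQV{Z^u,Z^v}_s} \leq (d\pQV{Z^u}_s)^{1/2}(d\pQV{Z^v}_s)^{1/2}$, together with the assumed Lebesgue densities, to get $\abs{d\pQV{Z^u,Z^v}_s} \leq \sigma_s^2 e^{-\kappa(u-s)}e^{-\kappa(v-s)}\,ds$ in one stroke. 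The paper instead works only with evaluated processes: it applies the scalar Cauchy--Schwarz bound $\pQV{Z^u,Z^v}_t \leq (\pQV{Z^u}\pQV{Z^v})_{t\wedge u\wedge v}^{1/2}$, then controls the product $\pQV{Z^u}\pQV{Z^v}$ by integration by parts, writing it as $\int_0^{\cdot} \pQV{Z^u}_s\,d\pQV{Z^v}_s + \int_0^{\cdot} \pQV{Z^v}_s\,d\pQV{Z^u}_s$ and bounding each term so that the sum is dominated by $d\bigl(\int_0^t \sigma_s^2 e^{-\kappa(u-s)}e^{-\kappa(v-s)}\,ds\bigr)^2$. Both routes land on exactly the same intermediate estimate $\pQV{Z^u,Z^v}_t \leq \int_0^t \one_{\{s\leq u\wedge v\}}\sigma_s^2 e^{-\kappa(u-s)}e^{-\kappa(v-s)}\,ds$, after which the Tonelli step and the factorization of the double integral over $[s,T]^2$ coincide. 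What your route buys is directness; its cost is that dominating the total-variation measure of $d\pQV{Z^u,Z^v}$ by the geometric mean of the densities is a standard but less elementary fact (it needs a Radon--Nikodym or Lebesgue-differentiation argument), whereas the paper's argument uses nothing beyond scalar inequalities and integration by parts for increasing processes. Your closing paragraph correctly flags the only measure-theoretic subtlety (interchanging the $du\,dv$ integration with the signed measures $d\pQV{Z^u,Z^v}_s$); note that the simplest repair is not polarization but simply bounding each number $\pQV{Z^u,Z^v}_t$ first, as above, and only then applying Tonelli to the resulting nonnegative triple integral --- which is precisely how the paper orders the steps.
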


\begin{proof}
  The second inequality is immediate from Theorem~\ref{lemma:1} and the observation that
  $\Delta Z^u_t = 0$ for $t > u$. We now proceed to prove the first one.
  For $t \leq u\wedge v$ we have
\begin{equation*}
	\pQV{Z^u}_t d\pQV{Z^v}_t \leq e^{-2\kappa u} \int_0^t \sigma^2_s e^{2\kappa
          s} ds \, e^{-2\kappa v} \sigma^2_t
        e^{2\kappa t} = \frac12 d\left(\int_0^t
          \sigma_s^2 e^{-\kappa (u-s)} e^{-\kappa (v-s)} ds \right)^2
\end{equation*}
which is symmetric in $u$ and $v$.
Using Cauchy-Schwarz for the predictable quadratic variation, the fact that $Z^u_t$ is constant for $t \geq u$ and integration by parts together with the previous inequality we get
\begin{align*}
	\pQV{Z^u, Z^v}_t  &\leq ( \pQV{Z^u} \pQV{Z^v})_{t\wedge u \wedge v}^{1/2} =
\left(\int_0^{t\wedge u\wedge v} \pQV{Z^u}_s d\pQV{Z^v}_s + \int_0^{t\wedge u\wedge
                            v} \pQV{Z^v}_s d\pQV{Z^u}_s\right)^{1/2} \\
&\leq \int_0^t \one_{\{s\leq u\wedge v\}} \sigma_s^2 e^{-\kappa(u-s)} e^{-\kappa(v-s)} ds.
\end{align*}
Therefore by Fubini, for any $0 \leq t \leq T$
\begin{align*}
	\pQV{M^T}_t
  &= \int_0^T \int_0^T \pQV{Z^u, Z^v}_t du dv \\
  & \leq \int_0^T \int_0^T \int_0^t \one_{\{s\leq u\wedge v\}} \sigma_s^2
    e^{-\kappa(u-s)} e^{-\kappa(v-s)} ds \, du \, dv \\
    & = \int_0^t \sigma_s^2 \left(\int_s^T e^{-\kappa (u-s)} du\right)^2 ds.
\end{align*}
which is the result.
\end{proof}

\subsection{Martingales}\label{sec:general:martingales}
\begin{proposition}\label{prop:Martingale}
  If $X$ is a square integrable real-valued martingale then  
\begin{align*}
  dM^T_t &= (T-t) \, dX_t \\
  \Delta M^T_t &= (T-t) \, \Delta X_t \\
  d\QV{M^T}_t &= (T-t)^2 \, d\QV{X}_t \\
  d\pQV{M^T}_t &= (T-t)^2 \, d\pQV{X}_t
\end{align*}
\end{proposition}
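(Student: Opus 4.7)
The plan is to exploit the fact that, when $X$ is itself a square integrable martingale, the auxiliary martingales $Z^u$ from Theorem~\ref{theorem:main} collapse to stopped copies of $X$, so that the four identities can essentially be read off from the conclusions of that theorem.

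First I would observe that $Z^u_t = \E^{\F_t} X_u = X_{t \wedge u}$: for $t \leq u$ this is the martingale property of $X$, while for $t > u$ the variable $X_u$ is already $\F_t$-measurable. Substituting into \eqref{eq:M} of Theorem~\ref{theorem:main} and splitting the integral at $t$ gives, for $0 \leq t \leq T$,
\[
  M^T_t = \int_0^T X_{t \wedge u}\,du = \int_0^t X_u\,du + (T-t)\,X_t.
\]
Applying integration by parts to this decomposition (the first summand is continuous and of finite variation) cancels the $X_t\,dt$ term coming from $d\bigl((T-t)X_t\bigr) = -X_t\,dt + (T-t)\,dX_t$, leaving the desired first identity $dM^T_t = (T-t)\,dX_t$.

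Once this is established, the remaining three identities follow from standard properties of stochastic integration against a semimartingale, applied to the continuous bounded predictable integrand $H_t := T-t$: namely $\Delta(H\cdot X)_t = H_t\,\Delta X_t$, $\QV{H\cdot X}_t = \int_0^t H_s^2\,d\QV{X}_s$ and $\pQV{H\cdot X}_t = \int_0^t H_s^2\,d\pQV{X}_s$. An alternative route, useful as a cross-check, is to plug $\Delta Z^u_t = \one_{\{t \leq u\}}\,\Delta X_t$, $\QV{Z^u, Z^v}_t = \QV{X}_{t\wedge u\wedge v}$ and $\pQV{Z^u, Z^v}_t = \pQV{X}_{t\wedge u\wedge v}$ directly into \eqref{eq:Delta}--\eqref{eq:pQV} and carry out the resulting double integrals, which produce the same $(T-t)^2$ factor after differentiating in $t$.

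I do not anticipate a genuine obstacle here: the main checks are that the integration by parts is legitimate (which it is, since $\int_0^\cdot X_u\,du$ is continuous of finite variation) and that the integrand $T-t$ is sufficiently well-behaved for the semimartingale integral formulas to apply, which it obviously is. The only subtlety worth stating explicitly is that the identification $Z^u_t = X_{t\wedge u}$ holds only up to indistinguishability, which is harmless since we have already committed to right-continuous modifications of $Z^u$ in Section~\ref{sec:general}.
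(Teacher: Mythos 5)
Your proposal is correct and follows essentially the same route as the paper's proof: identify $Z^u_t = X_{t\wedge u}$, substitute into \eqref{eq:M} to get $M^T_t = \int_0^t X_u\,du + (T-t)X_t$, apply integration by parts to obtain $dM^T_t = (T-t)\,dX_t$, and deduce the remaining identities from standard properties of the stochastic integral. The extra cross-check via \eqref{eq:Delta}--\eqref{eq:pQV} and the remark on indistinguishability are fine but not needed.
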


\begin{proof}
Since
\begin{equation*}
  Z^u_t = \E^{\F_t} X_u = X_{t \wedge u}
\end{equation*}
we have by \eqref{eq:M}
\begin{equation*}
  M^T_t = \int_0^T X_{t\wedge u} \,du = \int_0^t X_u\, du + (T-t) X_t.
\end{equation*}
Using integration by parts
\begin{equation*}
  dM^T_t = X_t \,dt + (-X_t \, dt + (T-t) \, dX_t) = (T-t) dX_t
\end{equation*}
and the remaining equalities now follow directly from stochastic calculus.
\end{proof}

\begin{remark}\label{remark:martT2}
  From integration by parts (and similarly for $\pQV{M^T}$)
  \begin{equation*}
    \QV{M^T}_T = 2\int_0^T (T-t) \QV{X}_t\,dt.
  \end{equation*}
\end{remark}

From the fact that the Doléans-Dade exponential is a positive local martingale and therefore a supermartingale we immediately get the following corollary.
\begin{corollary}
  If the martingale $X$ is continuous and $X_0 = x \in \R$ then for all
  $\lambda \in \mathbb{C}$ and $T > 0$
  \begin{equation*}
    \E \exp\left(\lambda (S_T - \E S_T) - \lambda^2 \int_0^T (T-u)^2 \,d\pQV{X}_u\right) \leq 1.
  \end{equation*}
\end{corollary}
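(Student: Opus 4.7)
The plan is to reduce the statement directly to the exponential supermartingale argument used in the proof of Proposition~\ref{prop:direct:mart}, applied to the auxiliary martingale $M^T$ produced by the general construction. First, I would invoke Proposition~\ref{prop:Martingale} to identify $dM^T_t = (T-t)\,dX_t$ and $d\pQV{M^T}_t = (T-t)^2\,d\pQV{X}_t$. Since $X$ is continuous and square integrable, $M^T$ is also continuous and, as observed in the remark following Proposition~\ref{prop:direct:mart}, it is a genuine martingale (not merely local) because $\E\QV{M^T}_t \leq T^2\,\E\QV{X}_t < \infty$. In the continuous setting $\pQV{X} = \QV{X}$, so $\QV{M^T}_T = \int_0^T (T-u)^2\,d\pQV{X}_u$.

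Second, I would consider the Doléans-Dade exponential
\begin{equation*}
\mathcal{E}^\lambda(M^T)_t = \exp\!\left(\lambda M^T_t - \tfrac{\lambda^2}{2}\QV{M^T}_t\right),
\end{equation*}
which solves $d\mathcal{E}^\lambda(M^T)_t = \lambda\,\mathcal{E}^\lambda(M^T)_t\,dM^T_t$ and is therefore (for real $\lambda$) a continuous positive local martingale, hence a supermartingale. Applying $\E\,\mathcal{E}^\lambda(M^T)_T \leq \mathcal{E}^\lambda(M^T)_0$ and using that $X_0 = x$ is deterministic (so $\F_0$ is $\PP$-trivial and $M^T_0 = \E^{\F_0}S_T = \E S_T$ while $\QV{M^T}_0 = 0$) gives
\begin{equation*}
\E \exp\!\left(\lambda(S_T - \E S_T) - \tfrac{\lambda^2}{2}\int_0^T (T-u)^2\,d\pQV{X}_u\right) \leq 1.
\end{equation*}

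Third, to upgrade the coefficient $\tfrac{\lambda^2}{2}$ to $\lambda^2$ it suffices to observe that for real $\lambda$ both $\lambda^2$ and $\int_0^T(T-u)^2\,d\pQV{X}_u$ are nonnegative, so the extra factor $\exp(-\tfrac{\lambda^2}{2}\int_0^T(T-u)^2\,d\pQV{X}_u)$ is bounded above by $1$, and the claimed inequality follows a fortiori.

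There is essentially no serious obstacle: the entire argument is a one-line combination of Proposition~\ref{prop:Martingale} with the supermartingale property of the Doléans-Dade exponential. The only minor subtlety is the statement's allowance of $\lambda \in \mathbb{C}$, which is not covered by the positivity-based supermartingale argument; for complex $\lambda$ one has to reason in terms of $|\mathcal{E}^\lambda(M^T)|$ (or equivalently replace $\lambda^2$ by $|\lambda|^2$ in the absorbing term), which is the only point in the proof requiring a moment of care.
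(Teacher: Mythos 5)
Your proof is correct and is essentially the paper's own argument: the corollary is derived there in one line from Proposition~\ref{prop:Martingale} together with the fact that the Doléans-Dade exponential of $\lambda M^T$ is a positive continuous local martingale and hence a supermartingale, with the coefficient $\lambda^2$ (rather than $\tfrac{\lambda^2}{2}$) absorbed exactly as you do, since $\int_0^T (T-u)^2\,d\pQV{X}_u \geq 0$. Your caveat about $\lambda \in \mathbb{C}$ is well taken: the positivity-based supermartingale argument only covers real $\lambda$, and the paper glosses over this point, so for complex $\lambda$ one must indeed pass to moduli (effectively replacing $\lambda^2$ by $\abs{\lambda}^2$, or equivalently working with $\mathrm{Re}\,\lambda$ and $\mathrm{Re}\,\lambda^2$), as you indicate.
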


\begin{remark}[Central limit theorem]\label{remark:CLT}
  From the Doléans-Dade exponential it is also possible to derive a
  central limit theorem for a suitably normalized family of random
  variables $G_T = \frac1{\sqrt{\E\QV{M^T}_T}} \int_0^T X_t\,dt$. A
  thorough investigation is beyond the scope of this work, but an
  outline of the argument goes as follows. Denote $\mathcal{E}$ the
  Doléans-Dade exponential and for two families of random variables
  $X_T, Y_T$ write $X_T \Psim Y_T$ if $X_T/Y_T \to 1$ in probability
  as $T \to \infty$. Now suppose that $\QV{M^T}_T \Psim \E\QV{M^T}_T$,
  which can for example follow from an ergodic theorem since
  $\QV{M^T}_T$ is usually an additive functional itself. Then for
  $\lambda \in \R$ we have for the characteristic function of $G_T$
  that
  \begin{align*}
    e^{i \lambda G_T} &= \exp\left(i\lambda \frac{M^T_T}{\sqrt{\E\QV{M^T}_T}} +
    \frac{\lambda^2}{2 \E\QV{M^T}_T} \QV{M^T}_T - \frac{\lambda^2}{2 \E\QV{M^T}_T}
    \QV{M^T}_T\right) \\ & = \mathcal{E}\left(\frac{i\lambda
    M^T_T}{\sqrt{\E\QV{M^T}_T}}\right) \exp\left(-\frac{\lambda^2 \QV{M^T}_T}{2
                        \E\QV{M^T}_T} \right)
    \Psim \mathcal{E}\left(\frac{i\lambda
    M^T_T}{\sqrt{\E\QV{M^T}_T}}\right) e^{-\frac{\lambda^2}{2}}.
  \end{align*}
  Now if the Doléans-Dade exponential (which can be negative since we
  have complex exponent) is a true martingale and the
  family $\mathcal{E}\left(\frac{i\lambda
      M^T_T}{\sqrt{\E\QV{M^T}_T}}\right)$ is uniformly integrable then for all
  $\lambda \in \R$
  \begin{equation*}
    \lim_{T\to\infty} \E e^{i \lambda G_T} = \lim_{T\to\infty} \E\, \mathcal{E}\left(\frac{i\lambda
        M^T_T}{\sqrt{\E\QV{M^T}_T}}\right) e^{-\frac{\lambda^2}{2}} = e^{-\frac{\lambda^2}{2}} 
  \end{equation*}
  so that $G_T$ converges in distribution to a standard normal random
  variable.
\end{remark}

\subsection{Markov Processes}\label{sec:general:markov}
Consider a continuous-time Markov process $(Y_t)_{t\geq0}$ with natural filtration
$(\F_t)_{t\geq 0}$, taking values in a Polish space $E$ and with trajectories that
are right-continuous with left limits. Denote $\B$ the set of Borel functions on
$\R^+ \times E$.

Fix a function $f \in \B$ such that $\sup_t \E f(t, Y_t)^2 < \infty$ so that we are in
the setting of Section~\ref{sec:general} with $X_t = f(t, Y_t)$, $S_T = \int_0^T
f(u, X_u) du$ and $M^T_t = \E^{\F_t} S_T$.
Define the two-parameter semigroup $(P_{t,u}f)_{0 \leq t \leq u}$ on $E$ by
\begin{equation*}
P_{t,u} f(y) = \E[f(u, Y_u) | Y_t = y].
\end{equation*}


Suppose that there is a set $\mathcal{D}(\Gamma) \subset \B \times \B$ and a map
$\Gamma: \mathcal{D}(\Gamma) \to \B$ such that for each $(f, g) \in
\mathcal{D}(\Gamma)$ we have, setting $F_t = f(t, Y_t), G_t = g(t, Y_t)$, that
\begin{equation}\label{eq:Gamma}
  \pQV{F, G}_t = 2 \int_0^t \Gamma(f, g)(s, Y_s)\,ds,\quad 0 \leq t \leq T.
\end{equation}

Usually, $\Gamma$ corresponds to (an extension of) the squared field
operator $\Gamma(f, g) = Lfg - fLg - gLf$, see
Remark~\ref{remark:gamma} below.

\begin{proposition}\label{prop:Markov}
  If $(P_{\cdot,u} f, P_{\cdot,v} f)_{0 \leq u,v \leq T} \in \mathcal{D}(\Gamma)$ we have
  for $0 \leq t \leq T$
\begin{align}
  d \pQV{M^{T}}_t &= 2 \int_t^T \int_t^T \Gamma(P_{t,u} f, P_{t,v} f)(t, Y_t) \, du
                      \, dv \, dt, \label{eq:pQVMarkov}\\
	\Delta M^{T}_t &= \int_t^T \left( P_{t,u} f(Y_t) - P_{t,u} f(Y_{t-}) \right) \, du. \label{eq:DeltaMarkov}
\end{align}
\end{proposition}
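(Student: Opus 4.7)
The plan is to combine the Markov property (to identify the auxiliary martingales $Z^u$) with Theorem~\ref{theorem:main} (to lift those identifications to $M^T$), using the defining relation \eqref{eq:Gamma} of $\Gamma$ for the predictable quadratic covariations.

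First I would use the Markov property to write $Z^u_t = \E^{\F_t} f(u, Y_u) = P_{t,u}f(Y_t)$ for $0 \leq t \leq u$, and $Z^u_t = f(u, Y_u)$ for $t \geq u$. Thus $Z^u$ is constant on $[u, \infty)$, which in particular gives $\Delta Z^u_t = 0$ for $t > u$ and $\pQV{Z^u, Z^v}_t = \pQV{Z^u, Z^v}_{t \wedge u \wedge v}$. Setting $g_u(s, y) := P_{s,u}f(y)$, we recognize $Z^u_s = g_u(s, Y_s)$ on $[0, u]$.

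For the jump formula \eqref{eq:DeltaMarkov}, I would substitute into \eqref{eq:Delta}: since $\Delta Z^u_t = 0$ for $t > u$ and $\Delta Z^u_t = P_{t,u}f(Y_t) - P_{t,u}f(Y_{t-})$ for $t \leq u$ (using continuity in $s$ of $s \mapsto P_{s,u}f(y)$, e.g.\ from the backward Kolmogorov equation, so that $Z^u_{t-} = P_{t,u}f(Y_{t-})$), the integral in \eqref{eq:Delta} collapses to $\int_t^T (P_{t,u}f(Y_t) - P_{t,u}f(Y_{t-}))\,du$.

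For \eqref{eq:pQVMarkov}, I would apply the hypothesis $(g_u, g_v) \in \mathcal{D}(\Gamma)$ together with \eqref{eq:Gamma} to obtain, on the interval where both $Z^u$ and $Z^v$ are non-constant,
\begin{equation*}
  \pQV{Z^u, Z^v}_t = 2 \int_0^{t \wedge u \wedge v} \Gamma(P_{\cdot, u}f, P_{\cdot, v}f)(s, Y_s)\,ds,
\end{equation*}
so that $d\pQV{Z^u, Z^v}_t = 2\,\one_{\{t \leq u \wedge v\}}\,\Gamma(P_{t,u}f, P_{t,v}f)(t, Y_t)\,dt$. Inserting this differential into \eqref{eq:pQV} and applying Fubini to swap the $du\,dv$ and $dt$ integrals (justified by the integrability bounds established in the proof of Theorem~\ref{theorem:main}) gives exactly \eqref{eq:pQVMarkov}.

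The main obstacle is the technical identification of the left limits $Z^u_{t-} = P_{t,u}f(Y_{t-})$, which strictly requires joint (or at least time-) regularity of $(s, y) \mapsto P_{s,u}f(y)$ together with the Markov property applied to $\F_{t-}$; this is standard under the paper's regularity conventions but must be handled carefully because $Y$ is only càdlàg. The remaining ingredients — that $(g_u, g_v) \in \mathcal{D}(\Gamma)$ and the Fubini step — are granted by the proposition's hypotheses and by the preliminary $L^1/L^2$ bounds already established in Theorem~\ref{theorem:main}.
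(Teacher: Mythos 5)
Your proposal is correct and takes essentially the same route as the paper's proof: identify $Z^u_t = P_{t,u}f(Y_t)$ via the Markov property, observe that $Z^u$ is constant on $[u,\infty)$, apply \eqref{eq:Gamma} to get $d\pQV{Z^u,Z^v}_t = 2\,\one_{\{t \leq u \wedge v\}}\Gamma(P_{t,u}f,P_{t,v}f)(t,Y_t)\,dt$, and substitute into \eqref{eq:pQV} and \eqref{eq:Delta} with Fubini, using continuity of $t \mapsto P_{t,u}f$ for the jump identity. The only step the paper adds is the normalization of replacing $f(u,y)$ by $f(u,y) - P_{0,u}f(y)$ so that $Z^u_0 = 0$, which reconciles \eqref{eq:Gamma} with the convention $\pQV{Z^u,Z^v}_0 = Z^u_0 Z^v_0$; since your conclusions are stated in differential form, omitting this is cosmetic.
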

\begin{proof}
  For all $0 \leq t \leq u \leq T$ we have
  \begin{equation*}
  Z^u_t = \E^{\F_t} f(u, Y_u) = P_{t,u}f(Y_t).
\end{equation*}
  Since $f$ can depend on $u$ we can
  always consider $f(u, y) - P_{0,u}f(y)$ instead of $f$ and therefore without loss of
  generality suppose that $Z^u_0 = P_{0,u}f(Y_0) = 0$.
  By~\eqref{eq:Gamma} for $0 \leq t \leq u \wedge v \leq T$
  \begin{equation*}
    d\pQV{Z^u, Z^v}_t = 2 \Gamma(P_{t,u} f, P_{t,v} f)(t, Y_t) dt, \quad 0 \leq t \leq u \wedge v \leq T.
  \end{equation*}
  For $t \geq v \wedge u$ either $Z^u_t$ or $Z^v_t$ is constant so that
  \begin{equation*}
    d\pQV{Z^u, Z^v}_t = 0, \quad u \wedge v \leq t \leq T,
  \end{equation*}
  and thus
  \begin{equation*}
    \pQV{Z^u, Z^v}_t = \int_0^t \one_{\{s \leq u \wedge v\}} 2 \Gamma(P_{s,u} f, P_{s,v} f)(s,
    Y_s) ds.
  \end{equation*}
  By~\eqref{eq:pQV} and Fubini's theorem
  \begin{align*}
    \pQV{M^{T}}_t 
&= \int_0^T \int_0^T \int_0^t \one_{\{s \leq u \wedge v\}} 2 \Gamma(P_{s,u} f,
  P_{s,v} f)(s, Y_s) \,ds \,du \, dv \\
&= \int_0^t \int_s^T \int_s^T 2 \Gamma(P_{s,u} f,
  P_{s,v} f)(s, Y_s) \, du \, dv \, ds.
  \end{align*}
  This proves the first equality~\eqref{eq:pQVMarkov}. Equality~\eqref{eq:DeltaMarkov}
  follows directly from~\eqref{eq:Delta} together with the observation that $Z^u_t$
  is constant
  for $t \geq u$ and the fact
  that $P_{t,u} f$ is continuous in $t$.
\end{proof}

\begin{remark}\label{remark:gamma}
When $Y$ is a Markov process with infinitesimal generator
$(L, \mathcal{D}(L) \subset \B)$ then $\Gamma$ in~\eqref{eq:pQVMarkov} corresponds to
the usual squared field operator whenever the latter is
well-defined,
\begin{equation*}
  \Gamma(f, g) = \frac12(L fg - f L g - g L f), \quad f
  \in \mathcal{D}(L), g \in \mathcal{D}(L), fg \in \mathcal{D}(L).
\end{equation*}
Indeed, suppose that for $f \in \mathcal{D}(L)$
\begin{equation}\label{eq:MP}
	f(t, Y_t) - f(0, Y_0) - \int_0^t(\partial_s f + L f(s, \cdot))(s, Y_s) \,ds
\end{equation}
is a local martingale.
As before we can assume $P_{0,u} f(Y_0) = 0$. Now if $P_{t,u}f$, $P_{t,v}f$ and
their product $P_{t,u}f
P_{t,v} f$ are in $\mathcal{D}(L)$ then
\begin{equation}\label{eq:MPPt}
  P_{t,u} f(Y_t)P_{t,v} f(Y_t) - \int_0^t(\partial_s (P_{s,u}f P_{s,v}f) +
  L(P_{s,u}f P_{s,v} f))(s, Y_s) \,ds
\end{equation}
is a local martingale. Since we assumed $P_{t, u} f$ to be in $\mathcal{D}(L)$ it
solves the Kolmogorov backward equation
  \begin{equation*}
    \partial_t P_{t,u}f(y) = -L P_{t, u}(t, y), \quad y \in E, 0 \leq t \leq u
  \end{equation*}
and the same holds true for $P_{t,v} f$. Thus
\begin{equation*}
  \partial_t (P_{t,u} f P_{t,v} f) = P_{t,u} f \partial_t P_{t,v} f + P_{t,v} f
  \partial_t P_{t,u} f = - P_{t,u} f L P_{t,v} f - P_{t,v} f
  L P_{t,u} f.
\end{equation*}
Substituting this into the integral in~\eqref{eq:MPPt} shows that indeed
\begin{equation*}
  P_{t,u} f(Y_t)P_{t,v} f(Y_t) - 2 \int_0^t \frac12(L(P_{s,u}f P_{s,v} f) - P_{s,u} f
  L P_{s,v} f - P_{s,v} f L P_{s,u}f)(s, Y_s) \, ds
\end{equation*}
is a local martingale.
\end{remark}

\begin{corollary}
  If $Y$ has continuous trajectories and $Y_0$ is constant then we have the
  following inequality for all $\lambda \in \mathbb{C}$ and $T > 0$ fixed:
  \begin{equation*}
    \E \exp\left[\lambda \left(S_T f - \E S_T f\right) - \lambda^2 \int_0^T
      \int_t^T \int_t^T \Gamma(P_{t,u} f, P_{t,v} f) (t, Y_t) \,du \, dv\, dt\right] \leq 1.
  \end{equation*}
\end{corollary}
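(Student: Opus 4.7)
The plan is to identify the exponent in the statement with the logarithm of the Doléans-Dade exponential $\mathcal{E}(\lambda M^T)$, where $M^T_t = \E^{\F_t} S_T$ is the martingale associated to $S_T$ from Section~\ref{sec:general}, and then invoke the standard positive local martingale argument used already in the proof of Proposition~\ref{prop:direct:mart}.

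First I would apply Proposition~\ref{prop:Markov} to get
\begin{equation*}
  d\pQV{M^T}_t = 2\int_t^T \int_t^T \Gamma(P_{t,u}f, P_{t,v}f)(t, Y_t)\, du\, dv\, dt,
\end{equation*}
and integrate from $0$ to $T$ to obtain $\tfrac{1}{2}\pQV{M^T}_T$ equal to the triple integral appearing in the statement. Since $Y$ has continuous trajectories and $P_{t,u}f$ is continuous in $t$ under the regularity assumed in Proposition~\ref{prop:Markov}, each auxiliary martingale $Z^u_t = P_{t,u}f(Y_t)$ is continuous, so by \eqref{eq:Delta} one has $\Delta M^T_t = \int_0^T \Delta Z^u_t\, du = 0$, i.e.\ $M^T$ is a continuous martingale and $\QV{M^T} = \pQV{M^T}$.

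Next, since $Y_0$ is constant, $M^T_0 = \E^{\F_0} S_T = \E S_T$ and $M^T_T = S_T$ by construction, so $S_T - \E S_T = M^T_T - M^T_0$. Combining with the previous paragraph, the expression inside the exponential in the statement equals
\begin{equation*}
  \lambda(M^T_T - M^T_0) - \tfrac{\lambda^2}{2}\QV{M^T}_T,
\end{equation*}
which is precisely the exponent of the Doléans-Dade exponential $\mathcal{E}(\lambda(M^T - M^T_0))_T$. This process is a continuous local martingale started at $1$; for real $\lambda$ it is positive, hence a supermartingale, and its expectation at time $T$ is at most $1$, yielding the stated inequality.

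The main obstacle is the complex $\lambda$ case, where positivity is lost and the supermartingale argument fails directly. The standard remedy is to invoke a Novikov-type condition on $\pQV{M^T}_T$ ensuring that $\mathcal{E}(\lambda(M^T - M^T_0))$ is a genuine (complex-valued) martingale and its expectation is exactly $1$; alternatively, one may deduce the complex case from the real one by analytic continuation under suitable uniform integrability, as implicitly needed already in Remark~\ref{remark:CLT}. The algebraic identification of the exponent with that of the Doléans-Dade exponential is entirely routine once Proposition~\ref{prop:Markov} is in hand; the only delicate point is the integrability that legitimizes exchanging expectation and exponentiation for non-real $\lambda$.
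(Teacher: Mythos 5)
Your proposal is correct and takes essentially the same approach as the paper: the paper's proof consists of identifying the exponent with the Doléans-Dade exponent of $\lambda(M^T - M^T_0)$ via Proposition~\ref{prop:Markov} and invoking the supermartingale property of the Doléans-Dade exponential, exactly the route you lay out (including the steps the paper leaves implicit, namely $\QV{M^T}=\pQV{M^T}$ from continuity and $M^T_0 = \E S_T$ from $Y_0$ being constant). The complex-$\lambda$ difficulty you flag is genuine, but it is a gap in the paper's own statement and one-line proof rather than in your argument, since the positivity-based supermartingale reasoning the paper invokes likewise only covers real $\lambda$.
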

\begin{proof}
  This follows directly from the Chernoff bound and the fact that the
  Doléans-Dade exponential is a supermartingale.
\end{proof}

\begin{remark}[Central limit theorem] The considerations in
  Remark~\ref{remark:CLT} for deriving a central limit theorem for
  additive functionals of martingales apply to continuous
  Markov processes as well.
\end{remark}

\subsection{Martingale inequalities}\label{subsec:martineq}
Let
\begin{equation*}
  S_T = \int_0^T X_u\,du
\end{equation*}
for some square integrable càdlàg process $X$ and
$M^T_t = \E^{\F_t}S_T$ as in the previous sections. Our key
observation is that $S_T - \E^{\F_0} S_T = M^T_T -
M^T_0$. Concentration inequalities for $S_T$ then follow from
concentration inequalities for martingales. The goal of this section
is to show how to pass from $\E^{\F_0} S_T$ to $\E S_T$ and to recall
some concentration inequalities for martingales.

For a real-valued random variable $Y$, denote $\Psi_Y(\lambda)$ the logarithm of the moment-generating
function of $Y$ and $\Psi_Y^*(x)$ its associated Cramér transform:
\begin{align*}
  \Psi_Y(\lambda) &= \log \E e^{\lambda Y}, \quad \lambda \in \R, \\
  \Psi_Y^*(x) &= \sup_{\lambda \in \R}(\lambda x - \Psi_Y(\lambda)),
                \quad x \in \R.
\end{align*}

Denote $\Lambda(\lambda)$ the logarithm of the moment-generating
function of the centered random variable $\E^{\F_0} S_T - \E S_T$ and $I$ its domain:
\begin{align*}
  \Lambda(\lambda) &= \Psi_{\E^{\F_0} S_T - \E S_T} = \log
                     \E\left[\exp \lambda \left(\E^{\F_0} S_T - 
                     \E S_T \right)\right], \\
  I &= \{ \lambda \in \R: \Lambda(\lambda) < \infty \}.
\end{align*}
In particular if $X_0 = x \in \R$ is a deterministic constant then $\E^{\F_0} S_T = \E S_T$ and $\Lambda(\lambda) = 0$.

Following~\autocite{dzhaparidze_bernstein-type_2001,}, define
\begin{align*}
  \varphi(x) &= e^x - 1 - x, \\
  \varphi_a(x) &= \varphi(ax)/a^2, \quad a \geq 0, \\
  H^a_t 
    &= \sum_{s\leq t} (\Delta M^T_s)^2 \one_{\{\abs{\Delta M^T_s} > a\}} +
    \pQV{M^T}_t, \quad a, t \geq 0
\end{align*}
where for $a=0$ we set $\varphi_0(x) = x^2/2$ and we have $H^0_t = \sum_{s\leq t}\left(\Delta
M^T_s\right)^2 + \pQV{M^T}_t$.

The next lemma allows us to extend our framework from processes with
initial measures concentrated on a single point to more general
classes of initial measures when we can control $\Lambda$.
  
\begin{lemma}\label{lemma:MGF}
  For $a \geq 0, \lambda \in I$
  \begin{equation*}
      \E \exp\Bigl(\lambda(S_T - \E S_T) - \varphi_a(\abs{\lambda}) H^a_T - \Lambda(\lambda))\Bigr)
    \leq 1.
  \end{equation*}
\end{lemma}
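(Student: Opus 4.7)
The strategy is to reduce the inequality to a conditional exponential supermartingale bound for $M^T$ and then absorb the $\F_0$-measurable initial randomness using the defining property of $\Lambda$.

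First I would decompose, using $M^T_T = S_T$ and $M^T_0 = \E^{\F_0} S_T$,
\[
S_T - \E S_T \;=\; (M^T_T - M^T_0) \;+\; (M^T_0 - \E S_T),
\]
where the second summand is $\F_0$-measurable with logarithmic moment generating function equal, by definition, to $\Lambda(\lambda)$ (finite precisely on $I$). The first summand is the value at $T$ of the martingale $N_t := M^T_t - M^T_0$, which starts at $0$ and has the same jumps and predictable quadratic variation as $M^T$, so the auxiliary process $H^a_T$ associated to $N$ agrees with the one associated to $M^T$.

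Next, invoke the Dzhaparidze--van Zanten exponential supermartingale inequality (cited in the paragraph preceding the lemma) for the square-integrable c\`adl\`ag martingale $N$, in its form conditional on $\F_0$: for any $\lambda \in \R$ and $a \geq 0$,
\[
\E\Bigl[\exp\bigl(\lambda(M^T_T - M^T_0) - \varphi_a(\abs{\lambda}) H^a_T\bigr) \,\Big|\, \F_0\Bigr] \leq 1.
\]
The validity of the inequality and its conditional form follow because conditioning on $\F_0$ preserves the martingale property of $N$ and the truncation/compensation construction behind $H^a$ is adapted.

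Finally, multiply both sides by the nonnegative $\F_0$-measurable quantity $\exp\bigl(\lambda(M^T_0 - \E S_T) - \Lambda(\lambda)\bigr)$, pull it inside the conditional expectation, and take an outer expectation. The left-hand side becomes exactly
\[
\E \exp\bigl(\lambda(S_T - \E S_T) - \varphi_a(\abs{\lambda}) H^a_T - \Lambda(\lambda)\bigr),
\]
while the right-hand side reduces to $\E\exp\bigl(\lambda(M^T_0 - \E S_T) - \Lambda(\lambda)\bigr)$, which equals $1$ by the very definition of $\Lambda$ and the assumption $\lambda \in I$.

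The main (and essentially only) point requiring care is the verification that the Dzhaparidze--van Zanten bound applies in its conditional form to $N$; once this is granted, the rest is a routine application of the tower property together with the definition of $\Lambda$. Square-integrability of $M^T$, needed for $\pQV{M^T}$ and for $H^a_T$ to be well defined, was already established earlier in this subsection via Doob's maximal inequality.
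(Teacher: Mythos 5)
Your proof is correct and follows essentially the same route as the paper's: both decompose $S_T - \E S_T$ into the martingale increment $M^T_T - M^T_0$ and the $\F_0$-measurable term $M^T_0 - \E S_T$, apply the Dzhaparidze--van Zanten supermartingale bound conditionally on $\F_0$, and then absorb the initial-law term via the definition of $\Lambda$ and the tower property. The only cosmetic difference is that the paper makes explicit that the conditional bound for negative $\lambda$ comes from applying the cited result (stated for $\lambda \geq 0$) to both $M^T$ and $-M^T$, a detail your ``for any $\lambda \in \R$'' glosses over but which is harmless since $\varphi_a(\abs{\lambda})$ and $H^a$ are unchanged under the sign flip.
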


\begin{proof}
  In~\autocite{dzhaparidze_bernstein-type_2001,} Corollary 3.1 it is shown that for
  any square integrable martingale $M$ and for all $a \geq 0, \lambda \geq 0$ the
  process
  \begin{equation*}
    \exp\left(\lambda M_t - \varphi_a(\abs{\lambda})H_t^a\right)
  \end{equation*}
  is a supermartingale. Applying this to $M^T$ and $-M^T$ together with the
  supermartingale property yields that for $a \geq 0, \lambda \in \R$
  \begin{equation*}
    \E^{\F_0} \exp\biggl(\lambda (M^T_t - M^T_0) -
      \varphi_a(\abs{\lambda}) H_t^a\biggr) \leq 1.
  \end{equation*}
  By definition we have furthermore that for $\lambda \in I$
  \begin{equation*}
    \E \exp(\lambda(M^T_0 - \E M^T_0)) = \exp \Lambda(\lambda).
  \end{equation*}
  Therefore for $\lambda \in I$ and all $t \in [0, T]$
  \begin{align*}
\lefteqn{    \E \exp\left(\lambda(M^T_t - \E M^T_t) - \varphi_a(\abs{\lambda})H^a_t -
    \Lambda(\lambda)\right) }\quad \\
    &= \E \left\{ \E^{\F_0} \left[\exp\Bigl(\lambda(M^T_t - M^T_0) -
      \varphi_a(\abs{\lambda}) H^a_t\Bigr) \right]
      \exp\Bigl(\lambda(M^T_0 - \E M^T_0) - \Lambda(\lambda)\Bigr)
      \right\} \\
    &\leq \E \exp\left(\lambda(M^T_0 - \E M^T_0) - \Lambda(\lambda)\right)
    = 1.
  \end{align*}
  We conclude by taking the inequality at $t = T$ and noting that $M^T_T$ = $S_T, \E M^T_T = \E S_T$.
\end{proof}

In the previous sections, we saw how to estimate the quantities $\Delta
M^T$ and $\pQV{M^T}$, and thus $H^a$, for different classes of
processes $X$. We will now recall some martingale inequalities
involving $H^a$, which then lead directly to inequalities for $S_T -
\E S_T$.

From Markov's inequality applied to $e^{\lambda Y}$ we immediately get Chernoff's
inequality
\begin{equation*}
  \PP\{Y \geq x\} \leq \exp(-\Psi^*_Y(x)).
\end{equation*}
By combining this with Lemma~\ref{lemma:MGF} and bounds on $\Lambda$ we can
immediately deduce the following Hoeffding, Bennett and Bernstein-type
inequalities. The approach is classical and we follow~\autocite{boucheron_concentration_2013}.

\begin{corollary}\label{corr:subGaussian}
  If $\Lambda(\lambda) \leq \frac{\lambda^2}{2} \rho^2$ for some $\rho \geq 0$ then
  \begin{equation*}
    \PP\left(S_T - \E S_T \geq R\, ; H^0_T \leq \sigma^2\right) \leq \exp\left(-\frac{R^2}{2 (\rho^2 + \sigma^2)}\right).
  \end{equation*}
\end{corollary}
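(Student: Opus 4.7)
The plan is to combine Lemma~\ref{lemma:MGF} (applied at $a=0$, so that $\varphi_0(\lambda) = \lambda^2/2$) with the sub-Gaussian bound on $\Lambda$ and the standard Chernoff optimization. Specifically, by Lemma~\ref{lemma:MGF} with $a=0$, for every $\lambda \in I$,
\begin{equation*}
  \E \exp\left(\lambda(S_T - \E S_T) - \frac{\lambda^2}{2} H^0_T\right) \leq e^{\Lambda(\lambda)} \leq \exp\left(\frac{\lambda^2}{2}\rho^2\right),
\end{equation*}
where the last step uses the hypothesis $\Lambda(\lambda) \leq \lambda^2 \rho^2/2$.

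Next I would restrict to the event $\{H^0_T \leq \sigma^2\}$. On this event, $-\frac{\lambda^2}{2}H^0_T \geq -\frac{\lambda^2}{2}\sigma^2$, so multiplying through and taking expectations gives
\begin{equation*}
  \E\left[\exp\bigl(\lambda(S_T - \E S_T)\bigr); H^0_T \leq \sigma^2\right] \leq \exp\left(\frac{\lambda^2}{2}(\rho^2 + \sigma^2)\right).
\end{equation*}
For $\lambda \geq 0$, Markov's inequality applied to $e^{\lambda(S_T - \E S_T)}$ on the event yields
\begin{equation*}
  \PP\left(S_T - \E S_T \geq R\,;\, H^0_T \leq \sigma^2\right) \leq \exp\left(-\lambda R + \frac{\lambda^2}{2}(\rho^2 + \sigma^2)\right).
\end{equation*}
Optimizing over $\lambda \geq 0$, the choice $\lambda = R/(\rho^2 + \sigma^2)$ gives the claimed bound $\exp(-R^2/(2(\rho^2+\sigma^2)))$.

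There is essentially no obstacle here: all the substantive work has been done in Lemma~\ref{lemma:MGF}, and what remains is a textbook Chernoff argument together with the trivial observation that truncating the exponential $-\frac{\lambda^2}{2}H^0_T$ on $\{H^0_T \leq \sigma^2\}$ costs only a factor $e^{\lambda^2 \sigma^2/2}$. The only small point to check is that the relevant $\lambda$ lies in $I$; for $\rho^2 > 0$ the hypothesis already implies $I = \R$, and if $\rho = 0$ then $\E^{\F_0}S_T = \E S_T$ almost surely, so Lemma~\ref{lemma:MGF} is directly applicable for every $\lambda$ and the proof proceeds unchanged.
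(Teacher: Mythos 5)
Your proof is correct and follows essentially the same route as the paper: Lemma~\ref{lemma:MGF} with $a=0$, restriction to the event $\{H^0_T \leq \sigma^2\}$, and a Chernoff/Markov optimization in $\lambda$ (the paper merely phrases the last step via the Cramér transform of a Gaussian). Your explicit handling of the case $\lambda \in I$ and of $\rho = 0$ is a small point of extra care that the paper glosses over, but the substance is identical.
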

\begin{proof}
  On the set $\{ H^0_T \leq \sigma^2 \}$, using
  $\varphi_0(\lambda) = \frac{\lambda^2}{2}$, $\Psi_{S_T - \E S_T}$ is upper bounded
  by the logarithmic MGF of a centered Gaussian random variable with variance
  $\rho^2 + \sigma^2$:
  $\Psi_{S_T - \E S_T}(\lambda) \leq \frac{(\rho^2 + \sigma^2)\lambda^2}{2}$. This
  implies that $\Psi^*_{S_T - \E S_T}$ is lower bounded by the corresponding Cramér
  transform, $\Psi^*_{S_T - \E S_T}(x) \geq \frac{x^2}{2 (\rho^2 + \sigma^2)}$, and
  the result follows immediately from Chernoff's inequality.
\end{proof}

\begin{corollary}\label{corr:subPoisson}
  If $\Lambda(\lambda) \leq \nu \varphi_a(\lambda)$ for some $a, \nu \geq 0$ then
  \begin{equation*}
    \PP\left(S_T - \E S_T \geq R \,; H^a_T \leq \mu\right) \leq \exp\left(-\frac{\mu + \nu}{a^2}h\left(\frac{a
      R}{\mu + \nu}\right)\right)
\end{equation*}
with
\begin{equation*}
  h(x) = (1+x)\log(1+x) - x, \quad x \geq -1.
\end{equation*}
\end{corollary}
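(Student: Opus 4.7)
The plan is to follow the same strategy as the proof of Corollary~\ref{corr:subGaussian}: combine Lemma~\ref{lemma:MGF} with the hypothesis on $\Lambda$ to upper bound the restricted moment generating function of $S_T - \E S_T$ on the event $\{H^a_T \leq \mu\}$, then apply Chernoff's inequality and optimize in $\lambda$.

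First, for $\lambda \geq 0$ in $I$, multiply out Lemma~\ref{lemma:MGF} on the event $\{H^a_T \leq \mu\}$. Since $\varphi_a \geq 0$, on that event we have $\varphi_a(\lambda) H^a_T \leq \varphi_a(\lambda)\mu$, and by assumption $\Lambda(\lambda) \leq \nu \varphi_a(\lambda)$. Therefore
\begin{equation*}
  \E\left[\exp\bigl(\lambda(S_T - \E S_T)\bigr)\,;\, H^a_T \leq \mu\right]
  \leq \exp\bigl((\mu+\nu)\,\varphi_a(\lambda)\bigr),
\end{equation*}
so that Markov's inequality yields, for every $\lambda \geq 0$ in $I$,
\begin{equation*}
  \PP\left(S_T - \E S_T \geq R\,;\, H^a_T \leq \mu\right)
  \leq \exp\bigl(-\lambda R + (\mu+\nu)\,\varphi_a(\lambda)\bigr).
\end{equation*}

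It then remains to compute the Legendre transform of $\lambda \mapsto (\mu+\nu)\varphi_a(\lambda)$. Differentiating in $\lambda$, the first-order condition reads $R = (\mu+\nu)(e^{a\lambda}-1)/a$, which gives the optimizer $a\lambda^* = \log(1 + x)$ with $x = aR/(\mu+\nu)$. Plugging $\lambda^*$ back in and using $\varphi_a(\lambda) = (e^{a\lambda} - 1 - a\lambda)/a^2$, the exponent simplifies to $-\tfrac{\mu+\nu}{a^2}\bigl[(1+x)\log(1+x) - x\bigr] = -\tfrac{\mu+\nu}{a^2} h(x)$, which is the claimed bound. The case $a = 0$ is handled by taking $a \to 0$, whereupon $\varphi_a(\lambda) \to \lambda^2/2$ and $\tfrac{\mu+\nu}{a^2} h(aR/(\mu+\nu)) \to R^2/\bigl(2(\mu+\nu)\bigr)$, recovering Corollary~\ref{corr:subGaussian}.

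The only mildly delicate point is to make sure that the optimizing $\lambda^*$ lies in $I$ (i.e.\ that the moment generating function of $\E^{\F_0}S_T - \E S_T$ is finite there); this is automatic whenever the hypothesis $\Lambda(\lambda) \leq \nu \varphi_a(\lambda)$ is assumed to hold on all of $\R^+$, and otherwise one restricts the range of $R$ accordingly. Beyond this, the argument is a routine Legendre-transform calculation, entirely parallel to the classical derivation of Bennett's inequality from a Bernstein-type moment generating function bound.
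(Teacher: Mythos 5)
Your proof is correct and follows essentially the same route as the paper's: both combine Lemma~\ref{lemma:MGF} with the hypothesis on $\Lambda$ to bound the moment generating function of $S_T - \E S_T$ restricted to $\{H^a_T \leq \mu\}$ by $\exp\bigl((\mu+\nu)\varphi_a(\lambda)\bigr)$, and then apply Chernoff's inequality; the paper phrases the optimization as the Cramér transform of a rescaled centered Poisson variable, which is exactly the Legendre-transform calculation you carry out explicitly. Your remark about the optimizer lying in $I$ is a reasonable extra precaution that the paper leaves implicit.
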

\begin{proof}
  On the set $\{ H^a_T \leq \mu \}$, $\Psi_{S_T - \E S_T}(\cdot/a)$ is upper
  bounded by the logarithmic MGF of a centered Poisson random variable with parameter
  $\frac{\mu + \nu}{a^2}$:
  $\Psi_{S_T - \E S_T}(\lambda / a) \leq \frac{(\mu +
    \nu)\varphi(\lambda)}{a^2}$. This implies that
  \begin{equation*}
\Psi^*_{S_T - \E S_T}(ax) =
  \sup_{\lambda \geq 0}(\lambda a x - \Psi_{S_T - \E S_T} (\lambda)) = \sup_{\lambda \geq 0}(\lambda x - \Psi_{S_T - \E S_T} (\lambda/a))
\end{equation*}
is
  lower bounded by the corresponding Cramér transform,
  $\Psi^*_{S_T - \E S_T}(ax) \geq \frac{\mu + \nu}{a^2}h\left(\frac{a^2 x}{\mu +
      \nu}\right)$, and the result follows from Chernoff's inequality after
  rescaling by $a$.
\end{proof}

\begin{corollary}
    If $\Lambda(\lambda) \leq \frac{\lambda^2 \nu}{2(1-b \lambda)}$ for some $b, \nu
    \geq 0$ and all $\lambda < 1/b$ then
    \begin{equation*}
       \PP\left(S_T - \E S_T \geq R\,; H^0_T \leq \mu\right) \leq \exp\left(-\frac{\mu + \nu}{b^2}h_1\left(\frac{b
      R}{\mu + \nu}\right)\right)
\end{equation*}
with
\begin{equation*}
  h_1(x) = 1 + x - \sqrt{1 + 2x}.
\end{equation*}
\end{corollary}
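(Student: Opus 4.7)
The plan is to follow the template established in the two preceding corollaries: starting from Lemma~\ref{lemma:MGF} with $a = 0$ (so that $\varphi_0(|\lambda|) = \lambda^2/2$), combine it with the hypothesis on $\Lambda$ to upper bound the conditional log-moment-generating function of $S_T - \E S_T$ on the event $\{H^0_T \leq \mu\}$, recognize the resulting bound as a familiar Bernstein-type log-MGF, and then invoke Chernoff's inequality with the corresponding Cramér transform.

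Concretely, for $\lambda \in [0, 1/b) \subset I$, Lemma~\ref{lemma:MGF} together with the hypothesis on $\Lambda$ yields
\[\E\bigl[e^{\lambda(S_T - \E S_T)};\, H^0_T \leq \mu\bigr] \leq \exp\!\Bigl(\tfrac{\lambda^2 \mu}{2} + \tfrac{\lambda^2 \nu}{2(1-b\lambda)}\Bigr).\]
Since $1/(1-b\lambda) \geq 1$ on $[0, 1/b)$, the first term in the exponent can be absorbed into the second, giving the cleaner bound $\tfrac{\lambda^2 (\mu+\nu)}{2(1-b\lambda)}$, which is the log-MGF of a centered Bernstein-type distribution with parameters $(\mu+\nu, b)$. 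Thus $\Psi_{S_T - \E S_T}$ restricted to this event is majorized by this classical log-MGF, and Chernoff's inequality reduces the problem to computing the associated Cramér transform.

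The main computation is therefore this Legendre conjugate. Setting $V = \mu+\nu$ and differentiating $\lambda R - \tfrac{\lambda^2 V}{2(1-b\lambda)}$, the substitution $y = 1/(1-b\lambda)$ turns the critical-point equation into $y^2 = 1 + 2bR/V$, producing the optimizer $\lambda^* = \tfrac{1}{b}(1 - 1/y)$. Substituting back and simplifying yields the explicit value $\tfrac{V}{b^2}\bigl(1 + bR/V - \sqrt{1+2bR/V}\bigr) = \tfrac{\mu+\nu}{b^2}\, h_1\!\bigl(bR/(\mu+\nu)\bigr)$, which is the stated bound. This Legendre-transform calculation is the only non-automatic step, but it is a standard exercise since $\lambda^2 V/(2(1-b\lambda))$ is the textbook Bernstein log-MGF and $h_1$ is its classical Cramér conjugate; as a sanity check, $h_1(x) = x^2/2 + O(x^3)$ near $0$, so the $b \to 0$ limit recovers the sub-Gaussian rate $R^2/(2(\mu+\nu))$ from Corollary~\ref{corr:subGaussian}.
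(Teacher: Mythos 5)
Your proof is correct and follows essentially the same route as the paper: apply Lemma~\ref{lemma:MGF} with $a=0$, absorb the $\tfrac{\lambda^2\mu}{2}$ term into the sub-Gamma bound via $\tfrac{\lambda^2}{2} \leq \tfrac{\lambda^2}{2(1-b\lambda)}$, and conclude by Chernoff with the Cramér transform $\tfrac{\mu+\nu}{b^2}h_1\bigl(\tfrac{bR}{\mu+\nu}\bigr)$. The only difference is that the paper cites this Cramér transform as the known conjugate of the sub-Gamma log-MGF (following the terminology of Boucheron--Lugosi--Massart), whereas you carry out the Legendre computation explicitly — and your computation checks out.
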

\begin{proof}
  On the set $\{ H^0_T \leq \mu \}$ using
  $\varphi_0(\lambda) = \frac{\lambda^2}{2} \leq \frac{\lambda^2}{2(1-\lambda b)}$,
  $\Psi_{S_T - \E S_T}$ is upper bounded by the (rescaled) logarithmic MGF of a
  sub-Gamma random variable (using the terminology
  of~\autocite{boucheron_concentration_2013}) with parameter $(\mu+\nu, b)$:
  $\Psi_{S_T - \E S_T}(\lambda) \leq \frac{(\mu +
    \nu)\lambda^2}{2(1-b\lambda)}$. This implies that $\Psi^*_{S_T - \E S_T}$ is
  lower bounded by the corresponding Cramér transform,
  $\Psi^*_{S_T - \E S_T}(x) \geq \frac{\mu + \nu}{b^2}h_1\left(\frac{b x}{\mu +
      \nu}\right)$, and the result follows as before from Chernoff's inequality.
\end{proof}

Going beyond the Chernoff inequality, we have for example the following result which
follows directly from Lemma~\ref{lemma:MGF} and an inequality on self-normalized
processes in~\autocite{de_la_pena_exponential_2009} Theorem 2.1.

\begin{corollary}\label{corr:selfnorm}
  If $\Lambda(\lambda) \leq \frac{\lambda^2}{2} \rho^2$ for some $\rho \geq 0$ and
  all $\lambda \in \R$ then
  \begin{equation*}
    \PP\left(\frac{\Abs{S_T - \E S_T}}{\sqrt{\frac32 (H^0_T + 
\E H^0_T + 2\rho^2)}} \geq R\right) \leq
    \min\{2^{1/3}, (2/3)^{2/3} R^{-2/3}\} \exp\left(-\frac{R^2}{2}\right).
  \end{equation*}
\end{corollary}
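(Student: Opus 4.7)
The plan is to reduce the statement to the canonical self-normalized martingale inequality of de la Peña, Lai and Shao. First I apply Lemma~\ref{lemma:MGF} with $a = 0$, in which case $\varphi_0(\lambda) = \lambda^2/2$ and $H^0_T = \sum_{s\leq T}(\Delta M^T_s)^2 + \pQV{M^T}_T$. The lemma gives, for every $\lambda \in \R$,
\[
\E \exp\left(\lambda(S_T - \E S_T) - \frac{\lambda^2}{2} H^0_T - \Lambda(\lambda)\right) \leq 1.
\]
Using the hypothesis $\Lambda(\lambda) \leq \frac{\lambda^2 \rho^2}{2}$ to absorb the $-\Lambda(\lambda)$ term (equivalently, by regarding $\rho^2$ as a deterministic contribution to the quadratic variation), this yields
\[
\E \exp\left(\lambda(S_T - \E S_T) - \frac{\lambda^2}{2}\bigl(H^0_T + \rho^2\bigr)\right) \leq 1.
\]

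Setting $Y = S_T - \E S_T$ and $V = H^0_T + \rho^2$, this is precisely the canonical assumption $\E\exp(\lambda Y - \lambda^2 V / 2) \leq 1$ for all $\lambda \in \R$ under which Theorem 2.1 of \autocite{de_la_pena_exponential_2009} applies. Invoking that theorem directly yields
\[
\PP\left(\frac{\Abs{Y}}{\sqrt{\tfrac{3}{2}\bigl(V + \E V\bigr)}} \geq R\right) \leq \min\{2^{1/3}, (2/3)^{2/3} R^{-2/3}\} \exp\left(-\frac{R^2}{2}\right).
\]

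To finish, I compute $\E V = \E H^0_T + \rho^2$, so $V + \E V = H^0_T + \E H^0_T + 2\rho^2$, and substituting reproduces the claimed inequality. There is no real obstacle here: the delicate work is entirely encapsulated in the cited self-normalized inequality, and the role of the present proof is simply to verify that the canonical exponential supermartingale bound holds with the correct effective variance, which the sub-Gaussian bound on $\Lambda$ allows by absorbing the initial-law contribution into $V$.
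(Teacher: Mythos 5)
Your proof is correct and takes essentially the same route as the paper: apply Lemma~\ref{lemma:MGF} with $a=0$, absorb the sub-Gaussian bound on $\Lambda$ into the variance term to get the canonical assumption for the pair $\left(S_T - \E S_T, \sqrt{H^0_T + \rho^2}\right)$, and then invoke Theorem 2.1 of de la Peña and Pang — you simply make explicit the absorption step that the paper compresses into ``direct consequence.'' The only difference is cosmetic: you quote the cited theorem with $B^2 + \E[B^2]$ in the denominator, whereas the paper quotes it with $B^2 + \E[A^2]$ under the additional hypothesis $\E A^2 = \E B^2 < \infty$; the two formulations coincide in the way both proofs use them.
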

\begin{proof}
  By Theorem 2.1 in~\autocite{de_la_pena_exponential_2009}, for a pair
  of random variables $(A, B)$ with $B > 0$ satisfying
  \begin{equation*}
    \E\left[\exp\left(\lambda A - \frac{\lambda^2}{2}
        B^2\right)\right] \leq 1, \quad \lambda \in \R
  \end{equation*}
  and $\E B^2 = \E A^2 < \infty$ we have
  \begin{equation}\label{eq:selfnormAB}
    \PP\left(\frac{\abs{A}}{\sqrt{\frac32 (B^2 + \E[A^2])}} \geq
      R\right)
    \leq \min\{2^{1/3}, (2/3)^{2/3} R^{-2/3}\} e^{-R^2/2}.
  \end{equation}
  The corollary is now a direct consequence of Lemma~\ref{lemma:MGF}
  (with $a = 0$).
\end{proof}

\begin{corollary}\label{corr:selfnormCD}
If $\Lambda(\lambda) \leq \frac{\lambda^2}{2} \rho^2$ for some $\rho \geq 0$ and
all $\lambda \in \R$ then
\begin{align*}
\lefteqn{\PP\left(\abs{S_T - \E S_T} \geq R\,; H^0_T + \E H^0_T + 2\rho^2 \leq C\abs{S_T - \E S_T} + D\right)} \quad \\
& \leq 2^{1/3} \left(1 \wedge \frac{CR + D}{3R^2}\right)^{1/3}
 \exp\left(-\frac{R^2}{3(CR+D)}\right), \quad C, D \geq 0.
\end{align*}
\end{corollary}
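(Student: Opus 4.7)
The plan is to reduce the statement to Corollary~\ref{corr:selfnorm} via the self-bounding hypothesis, on the relevant event. Write $Y = \abs{S_T - \E S_T}$ and denote the event in question by
\begin{equation*}
  A_R = \{Y \geq R\} \cap \{ H^0_T + \E H^0_T + 2\rho^2 \leq C\,Y + D \}.
\end{equation*}
The function $y \mapsto y/\sqrt{\tfrac32(Cy+D)}$ is increasing on $[0,\infty)$ (the derivative is easily checked to be positive since the numerator grows linearly and the denominator like $\sqrt{y}$). Hence on $A_R$ the self-bounding inequality yields
\begin{equation*}
  \frac{Y}{\sqrt{\tfrac32(H^0_T + \E H^0_T + 2\rho^2)}}
  \geq \frac{Y}{\sqrt{\tfrac32(CY+D)}}
  \geq \frac{R}{\sqrt{\tfrac32(CR+D)}} =: R'.
\end{equation*}

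Next I would apply Corollary~\ref{corr:selfnorm} directly to this lower bound: on $A_R$ the left-hand side of the self-normalized ratio exceeds $R'$, so
\begin{equation*}
  \PP(A_R) \leq \min\{2^{1/3}, (2/3)^{2/3} (R')^{-2/3}\}\; \exp\left(-\frac{(R')^2}{2}\right).
\end{equation*}

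The remaining step is algebraic simplification. Substituting $(R')^2 = \frac{2R^2}{3(CR+D)}$ produces the exponential factor $\exp\!\left(-\tfrac{R^2}{3(CR+D)}\right)$. For the prefactor, computing
\begin{equation*}
  (2/3)^{2/3}(R')^{-2/3}
  = (2/3)^{2/3}\left(\frac{3(CR+D)}{2R^2}\right)^{1/3}
  = 2^{1/3}\left(\frac{CR+D}{3R^2}\right)^{1/3},
\end{equation*}
so that
\begin{equation*}
  \min\{2^{1/3}, (2/3)^{2/3}(R')^{-2/3}\}
  = 2^{1/3}\min\!\left\{1,\left(\frac{CR+D}{3R^2}\right)^{1/3}\right\}
  = 2^{1/3}\left(1 \wedge \frac{CR+D}{3R^2}\right)^{1/3},
\end{equation*}
using that $x \mapsto x^{1/3}$ is increasing. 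Combining gives the stated inequality.

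The only delicate point is the monotonicity of $y \mapsto y/\sqrt{\tfrac32(Cy+D)}$, which ensures that replacing $Y$ by its lower bound $R$ in the denominator (through the self-bounding) still gives a valid lower bound on the ratio. All other steps are elementary substitutions and the appeal to Corollary~\ref{corr:selfnorm}.
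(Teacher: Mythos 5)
Your proof is correct and follows essentially the same route as the paper: reduce to Corollary~\ref{corr:selfnorm} by noting that $y \mapsto y/\sqrt{\tfrac32(Cy+D)}$ is increasing, so on the stated event the self-normalized ratio exceeds $R/\sqrt{\tfrac32(CR+D)}$, then simplify. Your explicit algebra for the prefactor and exponent matches the paper's (implicit) computation exactly.
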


\begin{proof}
On the set $\{ \abs{S_T - \E S_T} \geq R \}$ we have by monotonicity of $x \mapsto \frac{x}{1+x}$ that
\begin{equation*}
\frac{\abs{S_T - \E S_T}}{\sqrt{\frac32 (C\abs{S_T - \E S_T} + D)}} \geq \frac{R}{\sqrt{\frac32 (CR + D)}}
\end{equation*}
and on $\{ H^0_T + \E H^0_T + 2\rho^2 \leq C\abs{S_T - \E S_T} + D \}$ we have
\begin{equation*}
\frac{\abs{S_T - \E S_T}}{\sqrt{\frac32 (H^0_T + \E H^0_T + 2\rho^2)}} \geq \frac{\abs{S_T - \E S_T}}{\sqrt{\frac32 (C\abs{S_T - \E S_T} + D)}}. 
\end{equation*}
Together with the previous corollary we get the result
\begin{align*}
\lefteqn{\PP\left(\abs{S_T - \E S_T} \geq R\,; H^0_T + \E H^0_T + 2\rho^2 \leq C\abs{S_T - \E S_T} + D\right)} \quad & \\
& \leq \PP\left(\frac{\abs{S_T - \E S_T}}{\sqrt{\frac32 (H^0_T + \E H^0_T + 2\rho^2)}} \geq
\frac{R}{\sqrt{\frac32 (CR + D)}} \right)\\
& \leq 2^{1/3} \left(1 \wedge \frac{CR + D}{3R^2}\right)^{1/3} \exp\left(-\frac{R^2}{3(CR + D)}\right).
\end{align*}

\end{proof}

\begin{corollary}\label{corr:selfnormCSTD}
If $\Lambda(\lambda) \leq \frac{\lambda^2}{2} \rho^2$ for some $\rho \geq 0$ and
all $\lambda \in \R$ then
\begin{align*}
\lefteqn{\PP\left(\abs{S_T - \E S_T} \geq R\,; H^0_T \leq C \abs{S_T} + D\right)} \quad \\
& \leq 2^{1/3} \left(1 \wedge \frac{CR + D'}{3R^2}\right)^{1/3}
                                                                                            \exp\left(-\frac{R^2}{3(CR+D')}\right), \quad C, D \geq 0 \\
  & \text{ with } D' = D + C \abs{\E S_T} + \E H^0_T + 2\rho^2
\end{align*}

\end{corollary}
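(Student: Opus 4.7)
The plan is to reduce Corollary~\ref{corr:selfnormCSTD} directly to Corollary~\ref{corr:selfnormCD} via a set-inclusion argument, so no fresh martingale estimate is needed. The only real work is relating the self-bounding event $\{H^0_T \leq C\abs{S_T} + D\}$ (which is in terms of $S_T$ itself) to the event $\{H^0_T + \E H^0_T + 2\rho^2 \leq C\abs{S_T - \E S_T} + D'\}$ (which is in terms of the centered variable) that appears in Corollary~\ref{corr:selfnormCD}.

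First I would apply the triangle inequality $\abs{S_T} \leq \abs{S_T - \E S_T} + \abs{\E S_T}$, which gives
\begin{equation*}
  H^0_T \leq C\abs{S_T} + D \leq C \abs{S_T - \E S_T} + C\abs{\E S_T} + D.
\end{equation*}
Adding the deterministic quantity $\E H^0_T + 2\rho^2$ to both sides and recognising the definition of $D'$, I get the set inclusion
\begin{equation*}
  \{H^0_T \leq C\abs{S_T} + D\} \subseteq \{H^0_T + \E H^0_T + 2\rho^2 \leq C\abs{S_T - \E S_T} + D'\}.
\end{equation*}

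Then I intersect both sides with $\{\abs{S_T - \E S_T} \geq R\}$, apply monotonicity of probability, and invoke Corollary~\ref{corr:selfnormCD} with the same $C$ but with $D$ replaced by $D'$; the resulting bound is exactly the right-hand side claimed. The sub-Gaussian hypothesis on $\Lambda$ transfers directly since the constants $C, D, R, \rho$ play the same roles in both statements.

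The main (mild) obstacle is just bookkeeping the constants — checking that the extra term $C\abs{\E S_T} + \E H^0_T + 2\rho^2$ incurred when centering $S_T$ and compensating $H^0_T$ matches exactly the definition of $D'$ given in the statement. Everything else is an immediate consequence of Corollary~\ref{corr:selfnormCD}, so the proof is essentially a one-line reduction.
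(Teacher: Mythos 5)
Your proposal is correct and matches the paper's own proof: the paper likewise uses the triangle inequality $\abs{S_T} - \abs{\E S_T} \leq \abs{S_T - \E S_T}$ to pass from the event $\{H^0_T \leq C\abs{S_T} + D\}$ to $\{H^0_T + \E H^0_T + 2\rho^2 \leq C\abs{S_T - \E S_T} + D'\}$, and then invokes Corollary~\ref{corr:selfnormCD} with $D$ replaced by $D'$. The bookkeeping of constants you describe is exactly what the paper does, so this is the same one-line reduction.
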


\begin{proof}
On the set $\{H^0_T \leq C \abs{S_T} + D\}$ we have
\begin{equation*}
  H^0_T \leq C(\abs{S_T} - \abs{\E S_T}) + C \abs{\E S_T} + D \leq
  C \abs{S_T - \E S_T} + C \abs{\E S_T} + D
\end{equation*}
so that
\begin{equation*}
H^0_T + \E H^0_T + 2\rho^2 \leq C \abs{S_T - \E S_T} + D + C \abs{\E
  S_T} + \E H^0_T + 2\rho^2
\end{equation*}
and the result follows directly from the previous Corollary.
\end{proof}

\section{Applications}\label{sec:examples}
\subsection{Polyak-Ruppert Averages}\label{sec:polyakruppert}
In this section, we use the notation $\Delta X_t = X_t - X_{t-1}$ for
a discrete-time process $X$. The symbols $t, s, u, T$ will always denote
time variables taking values in $\mathbb{Z}^+$.

Consider the real-valued
process $X$ defined by the recursion
\begin{equation}\label{eq:Xrecursion}
  X_t = X_{t-1} - \alpha_t \,g(X_{t-1}, W_t), \quad X_0 = x
\end{equation}
with $x \in \R$, $(\alpha_t)_{t\in\mathbb{N}}$ a sequence in $\R$,
$W_t$ a sequence of independent identically distributed random
variables with common law $\mu$ such that $\mu$ has compact
support, and $g \colon \R \times \R \to \R$ is a function such that
\begin{equation}\label{eq:stochapproxeq1}
  0 < m(w) \leq \partial_x \, g(x, w) \leq M(w) < \infty, \quad x, w \in \R
\end{equation}
for some functions $m \colon \R \to \R$ and $M \colon \R \to \R$.

The recursion~\eqref{eq:Xrecursion} is an instance of the
Robbins-Monroe algorithm for finding a root of the function
$\bar{g}(x) = \int g(x, w) \mu(dw)$. In our case, the
assumption~\eqref{eq:stochapproxeq1} implies that $\bar{g}$ is the
derivative of some strongly convex function and that $\bar{g}$ is
Lipschitz continuous with Lipschitz constant
$\bar{M} = \int M(w) \mu(dw)$.

Under certain assumptions on $g$ and the sequence of step sizes
$\alpha_t$, it can be shown that $X_t$ converges almost surely to a
limit $x^*$ such that $\bar{g}(x^*) = 0$
(\autocite{robbins_stochastic_1951, jentzen_strong_2018}).
It was later shown that the
convergence rate of the algorithm could be improved by considering the
Polyak-Ruppert averages $\frac{1}{T}\sum_{t=0}^{T-1} X_t$ (\autocite{polyak_acceleration_1992,}).

Using the approach developed in Section~\ref{sec:discrete} we now show
how to obtain concentration inequalities for the Polyak-Ruppert
averages around their expected value in the sense that if $\alpha_t =
\lambda t^{-p}$ for $\lambda$ sufficiently small and $p < 1/2$ then (see
Corollary~\ref{corr:PolyakRuppert} below for the precise statement)
\begin{multline*}
  \PP\left(\frac{1}{T}\sum_{t=0}^{T-1} X_t - \E\left[\frac{1}{T}\sum_{t=0}^{T-1} X_t\right] \geq R \:;\: \sup_{t\leq T} \sup_w \Abs{g(X_t, w)}\leq
    G\right) \\
  \leq \exp\left(-\frac{(1-2p) \, \bar{m}^2 \, R^2 \, T}{32 \, G^2}\right).
\end{multline*}
In particular, the order in $T$, the dependence of the numerator on
$\partial_x g$ and of the
denominator on $g$ match the central
limit theorem in \autocite{fort_central_2015}.  By
\autocite{jentzen_strong_2018} we also have under some additional
conditions that for $t \geq 1$, $\E\Abs{X_t - x^*} \leq C t^{-p/2}$ for some
constant $C$, so that
\begin{align*}
  \E\left[\frac{1}{T}\sum_{t=0}^{T-1} X_t\right] - x^* &\leq \frac{X_0 - x^*}{T} + \frac{C}{T}
  \sum_{t=1}^{T-1} t^{-p/2} \leq \frac{X_0 - x^*}{T} + \frac{C}{T}
  \frac{(T-1)^{1-p/2}}{1-p/2} \\ & \leq \frac{X_0 - x^*}{T} + \frac{2C}{T^{p/2}}
\end{align*}
and $\E\left[\frac{1}{T}\sum_{t=0}^{T-1} X_t\right] \to x^*$ as $T \to \infty$.

\begin{lemma}\label{lemma:polyakCt}
  Let $g^*(x) = \sup_{w \in \supp(\mu)} g(x, w)$ and set $C_t =
  \Abs{\alpha_t \, g^*(X_{t-1})}$.
  The process $C_t$ defines a sequence of $\F_{t-1}$-measurable
  bounded random variables such that $\abs{\Delta X_t} \leq C_t$.
\end{lemma}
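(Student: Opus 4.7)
The plan is to verify the three properties claimed for $C_t$ separately: $\F_{t-1}$-measurability, boundedness as a random variable, and the pointwise domination $|\Delta X_t| \leq C_t$. All three reduce to rather standard considerations once the measurability and local boundedness of $g^*$ itself are established.

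First I would show that $g^*$ is a Borel-measurable function on $\R$ that is finite and locally bounded. Since $\mu$ has compact support by hypothesis and $g$ is jointly continuous in $(x,w)$ (a consequence of the smoothness assumed in~\eqref{eq:stochapproxeq1} together with standard regularity in $w$), the pointwise supremum $g^*(x) = \sup_{w \in \supp(\mu)} g(x,w)$ is attained on the compact set $\supp(\mu)$ and defines an upper semicontinuous, hence Borel measurable, function of $x$. Moreover, on any compact interval in $x$, continuity of $g$ on the compact product of that interval with $\supp(\mu)$ gives a uniform bound for $g^*$.

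Next, an easy induction on~\eqref{eq:Xrecursion} shows that $X_{t-1}$ is $\F_{t-1}$-measurable (the noise $W_t$ enters only at step $t$). Combined with the measurability of $g^*$ and $\alpha_t \in \R$ deterministic, this yields $\F_{t-1}$-measurability of $C_t = |\alpha_t\, g^*(X_{t-1})|$. For boundedness I would induct on $t$: $X_0 = x$ is deterministic, and if $|X_{t-1}| \leq B_{t-1}$ almost surely then continuity of $g$ on $[-B_{t-1}, B_{t-1}] \times \supp(\mu)$ gives an almost sure bound on $g(X_{t-1}, W_t)$ and thus on $X_t$ via the recursion; the same compactness argument bounds $g^*(X_{t-1})$, whence $C_t$ is a bounded random variable.

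Finally, the recursion gives $\Delta X_t = -\alpha_t\, g(X_{t-1}, W_t)$, so
\begin{equation*}
|\Delta X_t| = |\alpha_t|\,|g(X_{t-1}, W_t)| \leq |\alpha_t| \sup_{w \in \supp(\mu)} |g(X_{t-1}, w)| = C_t,
\end{equation*}
interpreting $g^*$ as the supremum of $|g|$ over $\supp(\mu)$ consistent with the absolute value in the definition of $C_t$. I do not anticipate any genuine obstacle here; the only point deserving care is ensuring that the regularity of $g$ is enough to make the supremum well-defined and measurable, which is handled by compactness of $\supp(\mu)$ and joint continuity of $g$.
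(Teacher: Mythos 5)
Your proof is correct and follows essentially the same route as the paper's: direct verification of the $\F_{t-1}$-measurability and of the domination $\abs{\Delta X_t} \leq C_t$, plus an induction on $t$ (using continuity of $g$ and compactness of $\supp(\mu)$) to show $X_{t-1}$ is bounded and hence that $C_t$ is bounded. Your care in reading $g^*$ as the supremum of $\abs{g}$ over $\supp(\mu)$ is in fact a useful correction of a small imprecision: with the paper's literal definition $g^*(x) = \sup_{w} g(x,w)$, the claimed inequality $\Abs{g(X_{t-1},W_t)} \leq \Abs{g^*(X_{t-1})}$ need not hold when $g$ takes negative values, whereas your interpretation makes the lemma (and its later use in Corollary~\ref{corr:PolyakRuppert}, where $\sup_w \Abs{g(X_t,w)}$ appears) go through as intended.
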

\begin{proof}
The $\F_{t-1}$-measurability is clear, as is the inequality
$\abs{\Delta X_t} = \Abs{\alpha_t\, g(X_{t-1}, W_t)} \leq \Abs{\alpha_t \,g^*(X_{t-1})}$.
Since $g$ is continuous, in order to show that $g^*(X_{t-1})$ is
bounded it is sufficient to show that $X_{t-1}$ is bounded. This
follows from a simple induction.
Indeed, for an arbitrary $s > 0$, suppose that $\abs{X_s} \leq R_s < \infty$ and
let $R_{s+1} = R_s + \alpha_{s+1} \sup_{\abs{x} \leq R_s} g^*(x)$. Then
$\abs{X_{s+1}} \leq R_s + \alpha_{s+1}\abs{g(X_s, W_{s+1})} \leq R_s + \alpha_{s+1}\sup_{\abs{x}
  \leq R_s} g^*(x) = R_{s+1} < \infty$. Since $R_0 = \abs{x} < \infty$
the conclusion follows by induction.
\end{proof}

\begin{lemma}
  For any Lipschitz function $f$ with Lipschitz constant
  $\llipnorm{f}$ and $x, y \in \R$, $0 \leq s \leq t$ we have
  \begin{equation*}
    \Abs{P_{s,t} f(x) - P_{s,t} f(y)} \leq \llipnorm{f}\, \Abs{x -
      y} \, \prod_{u=s+1}^t \sqrt{1-2 \alpha_u \bar{m} + \alpha^2_u \bar{M}}
  \end{equation*}
\end{lemma}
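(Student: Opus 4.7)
The plan is a synchronous coupling argument combined with the mean value theorem. Let $X^x$ and $X^y$ denote two copies of the recursion~\eqref{eq:Xrecursion} starting at time $s$ from $x$ and $y$ respectively and driven by the same noise sequence $(W_u)_{u > s}$. Since $f$ is Lipschitz and the marginals of the two coupled trajectories are the correct transition laws,
\begin{equation*}
\Abs{P_{s,t} f(x) - P_{s,t} f(y)} = \Abs{\E[f(X_t^x) - f(X_t^y)]} \leq \llipnorm{f} \, \E \Abs{X_t^x - X_t^y} \leq \llipnorm{f} \sqrt{\E \Abs{X_t^x - X_t^y}^2},
\end{equation*}
where the last step is Jensen's inequality. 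This reduces the problem to a one-step control of the second moment of the coupling gap.

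Applying the mean value theorem to $x \mapsto g(x, W_u)$, I obtain a random point $\xi_u$ between $X_{u-1}^x$ and $X_{u-1}^y$ (depending on $W_u$) such that
\begin{equation*}
X_u^x - X_u^y = \bigl(1 - \alpha_u\, \partial_x g(\xi_u, W_u)\bigr) \, (X_{u-1}^x - X_{u-1}^y).
\end{equation*}
Squaring, expanding, and taking the conditional expectation with respect to $\F_{u-1}$, and using that $W_u$ is independent of $\F_{u-1}$ while $X_{u-1}^x - X_{u-1}^y$ is $\F_{u-1}$-measurable, I would invoke the pointwise bounds $m(w) \leq \partial_x g(\xi, w) \leq M(w)$ from~\eqref{eq:stochapproxeq1} and integrate against the law $\mu$ of $W_u$ to obtain the one-step inequality
\begin{equation*}
\E\bigl[\Abs{X_u^x - X_u^y}^2 \bigm| \F_{u-1}\bigr] \leq \bigl(1 - 2\alpha_u \bar{m} + \alpha_u^2 \bar{M}\bigr) \, \Abs{X_{u-1}^x - X_{u-1}^y}^2.
\end{equation*}
Iterating this recursion from $u = s+1$ up to $u = t$ (and using that the gap at time $s$ is the deterministic quantity $x - y$) and taking the square root yields $\E \Abs{X_t^x - X_t^y}^2 \leq \Abs{x-y}^2 \prod_{u=s+1}^t \bigl(1 - 2\alpha_u \bar{m} + \alpha_u^2 \bar{M}\bigr)$, which substituted into the Lipschitz bound above gives the stated product.

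The main technical subtlety is the one-step estimate: the point $\xi_u$ given by the mean value theorem depends on $W_u$, so $W_u$ and $\partial_x g(\xi_u, W_u)$ cannot be decoupled. The expansion of $(1 - \alpha_u \partial_x g(\xi_u, W))^2$ must therefore be bounded pointwise in $w$ using the lower and upper bounds on $\partial_x g(\cdot, w)$ \emph{before} integrating against $\mu$; this is what produces the factor $\bar{m}$ in the linear term and the corresponding moment of $M$ in the quadratic term, independently of the particular value of $\xi_u$.
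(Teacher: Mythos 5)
Your proof is correct and follows essentially the same route as the paper: a synchronous coupling driven by the same noise sequence, the Lipschitz--Jensen reduction to the mean-square coupling gap, and a one-step contraction estimate from the pointwise bounds on $\partial_x g$ (the paper expands $\Delta (X^x-X^y)^2_t$ pathwise via summation by parts and takes a single expectation at the end using independence of the $W_u$, rather than conditioning on $\F_{u-1}$ step by step, but the computation is the same). One remark: both your derivation and the paper's actually yield the second moment $\int M(w)^2\,\mu(dw)$ in the quadratic term, so the $\bar{M}$ written in the lemma statement (defined in the paper as $\int M(w)\,\mu(dw)$) is a typo in the paper's statement, not a defect of your argument.
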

\begin{proof}
  Let $X^x_t, X^y_t$ be the values at time $t$ of the
  recursion~\eqref{eq:Xrecursion} started from $X_s = x$
  respectively $X_s = y$. Then by definition $P_{s,t}
  f(x) = \E f(X^x_t)$ so that
  \begin{equation*}
    \Abs{P_{s,t} f(x) - P_{s,t} f(y)}
    = \Abs{\E \left(f(X^x_t) - f(X^y_t)\right)} \leq \llipnorm{f} \sqrt{\E \left(X^x_t - X^y_t\right)^2}.
  \end{equation*}
  Now from summation by parts and the bounds on $\partial_x \,g$ that
  we assumed we get
  \begin{align*}
    \Delta (X^x-X^y)^2_t
    &= 2 (X^x_{t-1} - X^y_{t-1}) \Delta
      (X^x-X^y)_t + \left[\Delta (X^x-X^y)_t\right]^2 \\
    &\leq -2 \, \alpha_t \, m(W_t) (X^x_{t-1} - X^y_{t-1})^2 + \alpha_t^2\,
      M(W_t)^2 (X^x_{t-1} - X^y_{t-1})^2 \\
    &= -(2 \, \alpha_t \, m(W_t) - \alpha_t^2 \, M(W_t)^2) (X^x_{t-1} - X^y_{t-1})^2
  \end{align*}
  so that by developing the recursion we obtain
  \begin{align*}
    (X^x-X^y)^2_t \leq (x-y)^2 \prod_{u=s+1}^t \left(1-(2 \, \alpha_u \, m(W_u) - \alpha_u^2 \, M(W_u)^2)\right)
  \end{align*}
  and the result follows by taking expectation, using the fact that
  the $W_t$ are i.i.d. Note that the square root is well-defined since
  $1-(2 \, \alpha_u \, \bar{m} - \alpha_u^2 \, \bar{M}^2) \geq 1 -
2 \, \alpha_u \, \bar{M} + \alpha_u^2\bar{M}^2 = (1-\alpha_u\bar{M})^2 \geq 0$.
\end{proof}

\begin{corollary}\label{corr:polyakkappa}
  If $\alpha_1 + \alpha_T \leq \frac{2\bar{m}}{\bar{M}^2}$ and
  $\alpha_{t+1} \leq \alpha_t$ for all $t \geq 1$ then for any
  1-Lipschitz function $f$
  \begin{equation*}
    \Abs{P_{s,t} f(x) - P_{s,t} f(y)} \leq \Abs{x -
      y} \, \left(1-\alpha_T \, \bar{m} + \tfrac12 \alpha_T^2 \bar{M}^2\right)^{t-s}
  \end{equation*}
\end{corollary}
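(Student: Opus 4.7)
The plan is to derive the corollary directly from the preceding lemma by controlling each factor in the product appearing there. Since $f$ is $1$-Lipschitz, the lemma gives
\[
  \Abs{P_{s,t} f(x) - P_{s,t} f(y)} \leq \Abs{x - y} \prod_{u=s+1}^t \sqrt{1 - 2\alpha_u \bar m + \alpha_u^2 \bar M^2},
\]
so it suffices to show that for $s \leq t \leq T$ each factor is bounded above by $1 - \alpha_T \bar m + \tfrac12 \alpha_T^2 \bar M^2$.

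First I would apply the elementary inequality $\sqrt{1+x} \leq 1 + x/2$ (valid for $x \geq -1$), which gives
\[
  \sqrt{1 - 2\alpha_u \bar m + \alpha_u^2 \bar M^2} \leq 1 - \alpha_u \bar m + \tfrac12 \alpha_u^2 \bar M^2 =: \phi(\alpha_u).
\]
Noting that the quadratic $\phi$ on the right is nonnegative since $1 - 2\alpha_u \bar m + \alpha_u^2 \bar M^2 \geq (1 - \alpha_u \bar M)^2$ by AM-GM on $2\alpha_u\bar m \leq 2\alpha_u \bar M$ being dominated appropriately, so no issue with taking square roots. The proof then reduces to establishing $\phi(\alpha_u) \leq \phi(\alpha_T)$ for every $u \in \{s+1, \dots, t\}$.

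Next I would exploit the parabolic structure of $\phi$. A direct computation gives
\[
  \phi(\alpha_u) - \phi(\alpha_T) = (\alpha_u - \alpha_T)\left[\tfrac12(\alpha_u + \alpha_T)\bar M^2 - \bar m\right].
\]
The monotonicity assumption $\alpha_{t+1} \leq \alpha_t$ yields $\alpha_u \geq \alpha_T$ for $u \leq t \leq T$, so the first factor is nonnegative. Since $\alpha_u \leq \alpha_1$, the hypothesis $\alpha_1 + \alpha_T \leq 2\bar m / \bar M^2$ forces $\alpha_u + \alpha_T \leq 2\bar m / \bar M^2$, making the bracketed term nonpositive. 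Hence $\phi(\alpha_u) \leq \phi(\alpha_T)$, and multiplying the $t-s$ resulting factors produces the claimed bound.

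There is essentially no obstacle: both steps are routine manipulations. The only point requiring a moment of thought is recognizing that the apparently ad hoc hypothesis $\alpha_1 + \alpha_T \leq 2\bar m / \bar M^2$ is precisely the condition that symmetrizes the sequence $(\alpha_u)_{u \geq 1}$ about the minimum $\bar m / \bar M^2$ of the parabola $\phi$, ensuring that the largest value $\phi(\alpha_1)$ still does not exceed $\phi(\alpha_T)$.
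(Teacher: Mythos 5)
Your proof is correct and takes essentially the same approach as the paper: both rely on the preceding Lemma, the factorization $(\alpha_u-\alpha_T)\bigl[\tfrac12(\alpha_u+\alpha_T)\bar{M}^2-\bar{m}\bigr]$ of the difference of the quadratics (which is exactly where the two hypotheses enter), and the elementary bound $\sqrt{1+x}\le 1+x/2$. The only difference is the order of the two steps --- the paper first shows $\beta_u\le\beta_T$ at the level of the squares and applies the square-root bound once to $\beta_T$, whereas you bound each factor by its quadratic first and then compare the quadratics --- an immaterial reordering.
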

\begin{proof}
  Let $\beta_t = \sqrt{1-2\alpha_t\bar{m} + \alpha_t^2
    \bar{M}^2}$.
  Since $\beta_u^2 - \beta_T^2 = (\alpha_u -
  \alpha_T)((\alpha_u+\alpha_T)\bar{M}^2 - 2 \bar{m})$,
  the assumptions on $\alpha$ imply that $\beta_u \leq \beta_T$ for
  all $1 \leq u \leq T$ so that
  \begin{equation*}
    \prod_{u=s+1}^{t} \beta_u \leq \beta_T^{t-s}.
  \end{equation*}
  Since $\sqrt{1-x} \leq 1 - x/2$ we also have
  \begin{equation*}
    \beta_T \leq 1 - \alpha_T \, \bar{m} + \tfrac12 \alpha_T^2 \bar{M}^2.
  \end{equation*}
  Together with the preceding Lemma we finally obtain
  \begin{align*}
    \Abs{P_{s,t} f(x) - P_{s,t} f(y)}
    &\leq \llipnorm{f}\, \Abs{x - y} \, \prod_{u=s+1}^t \beta_u
    &\leq \Abs{x -
      y} \, \left(1-\alpha_T \, \bar{m} + \tfrac12 \alpha_T^2 \bar{M}^2\right)^{t-s}.
  \end{align*}
\end{proof}

\begin{corollary} \label{corr:PolyakRuppert}
  For any $T \in \mathbb{N}$ fixed, if $\alpha_t = \lambda t^{-p}$ for
  $p < 1/2$ and $\lambda$ such that
  \begin{equation*}
    \lambda \leq \frac{2 \bar{m}}{\bar{M}^2} \, \frac{T^p}{1+T^p}
  \end{equation*}
  we have
  \begin{multline*}
  \PP\left(\frac{1}{T}\sum_{t=0}^{T-1} X_t - \E\left[\frac{1}{T}\sum_{t=0}^{T-1} X_t\right] \geq R \:;\: \sup_{t\leq T} \sup_w \Abs{g(X_t, w)}\leq
    G\right) \\
  \leq \exp\left(-\frac{(1-2p) \, \bar{m}^2 \, R^2 \, T}{32 \, G^2}\right).
\end{multline*}
\end{corollary}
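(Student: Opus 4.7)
The plan is to apply Proposition~\ref{prop:discrete} with the $1$-Lipschitz observable $f(u, x) = x$, treating the Polyak-Ruppert sum $\sum_{t=0}^{T-1} X_t$ (after a trivial re-indexing) as the $S_T$ in that proposition. The three inputs required by Proposition~\ref{prop:discrete}, namely a pathwise bound on $|\Delta X_t|$ and a Lipschitz contraction estimate on $P_{s,t} f$, are supplied by Lemma~\ref{lemma:polyakCt} and Corollary~\ref{corr:polyakkappa} respectively. Lemma~\ref{lemma:polyakCt} gives $|\Delta X_t| \leq C_t := \alpha_t\, g^*(X_{t-1})$, and on the event $\{\sup_{t\leq T}\sup_w |g(X_t,w)|\leq G\}$ we may replace this by the deterministic bound $C_t \leq \alpha_t G$. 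For Corollary~\ref{corr:polyakkappa}, the required hypothesis $\alpha_1 + \alpha_T \leq 2\bar m/\bar M^2$ is exactly $\lambda(1+T^{-p}) \leq 2\bar m/\bar M^2$, which is the corollary's assumption on $\lambda$. This lets me take $\sigma_t \equiv 1$ and $\kappa_t \equiv \kappa := \alpha_T \bar m - \tfrac12 \alpha_T^2 \bar M^2$.

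Next I need a clean lower bound $\kappa \geq \tfrac12 \alpha_T \bar m$. The hypothesis on $\lambda$ implies $\alpha_T \bar M^2 \leq 2\bar m /(1+T^p) \leq \bar m$ for $T \geq 1$, hence $\tfrac12 \alpha_T^2 \bar M^2 \leq \tfrac12 \alpha_T \bar m$ and the bound follows. I then compute the quantity $a^2$ appearing in Proposition~\ref{prop:discrete}:
\begin{equation*}
\sum_{t=1}^T \frac{\sigma_t^2\, C_t^2}{\kappa_t^2}
\;\leq\; \frac{G^2}{\kappa^2} \sum_{t=1}^T \alpha_t^2
\;\leq\; \frac{G^2}{\kappa^2} \cdot \frac{\lambda^2 T^{1-2p}}{1-2p}
\;\leq\; \frac{G^2 \cdot 4}{\alpha_T^2 \bar m^2} \cdot \frac{\lambda^2 T^{1-2p}}{1-2p}
\;=\; \frac{4 G^2 T}{(1-2p)\bar m^2} \;=:\; a^2,
\end{equation*}
where I used the elementary integral comparison $\sum_{t=1}^T t^{-2p} \leq T^{1-2p}/(1-2p)$ valid for $p < 1/2$, and then substituted $\alpha_T^2 = \lambda^2 T^{-2p}$.

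Finally, since the desired event involves $\tfrac1T \sum_t X_t \geq R$, I apply Proposition~\ref{prop:discrete} with the threshold $RT$ in place of $R$, obtaining
\begin{equation*}
\PP(\cdots) \;\leq\; \exp\!\left(-\frac{(RT)^2}{8 a^2}\right) \;=\; \exp\!\left(-\frac{(1-2p)\,\bar m^2 \,R^2\, T}{32\, G^2}\right),
\end{equation*}
which is precisely the stated bound.

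The main obstacles are not conceptual but bookkeeping: (i) reconciling the indexing between $\sum_{t=0}^{T-1} X_t$ used here and $\sum_{u=1}^T f(u, X_u)$ in Proposition~\ref{prop:discrete}, which amounts to relabelling and does not affect the martingale estimate; (ii) correctly threading the event $\{\sup_{t,w} |g(X_t,w)|\leq G\}$ so that the originally random bound $C_t = \alpha_t g^*(X_{t-1})$ can be replaced by $\alpha_t G$ inside the probability; and (iii) the algebraic verification that the hypothesis on $\lambda$ yields both $\kappa \geq \tfrac12 \alpha_T \bar m$ and the correct scaling in the final exponent. Once the $\kappa \geq \tfrac12 \alpha_T \bar m$ estimate is in hand, the cancellation of $\lambda$ and the appearance of the factor $T$ in the exponent are immediate from the calculation above.
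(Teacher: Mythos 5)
Your proposal is correct and follows essentially the same route as the paper's own proof: apply Proposition~\ref{prop:discrete} with $\sigma_t \equiv 1$, the random bound $C_t = \alpha_t g^*(X_{t-1})$ from Lemma~\ref{lemma:polyakCt}, and $\kappa_t \equiv \kappa_T = \alpha_T\bar m - \tfrac12\alpha_T^2\bar M^2$ from Corollary~\ref{corr:polyakkappa}, then use $\kappa_T \geq \tfrac12 \alpha_T \bar m$ and the integral comparison $\sum_{t\leq T} t^{-2p} \leq T^{1-2p}/(1-2p)$ to get $a^2 = 4G^2T/((1-2p)\bar m^2)$, and finish with threshold $RT$. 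Your explicit verification of the hypothesis $\alpha_1+\alpha_T \leq 2\bar m/\bar M^2$ of Corollary~\ref{corr:polyakkappa}, and your care in threading the event rather than literally replacing the random $C_t$ by $\alpha_t G$, only make explicit what the paper leaves implicit.
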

\begin{proof}
  We are in the situation of Proposition~\ref{prop:discrete} with
  $\sigma_t^2 = 1$, $C_t = \lambda t^{-p} g^*(X_t)$ from
  Lemma~\ref{lemma:polyakCt} and
  $\kappa_t = \kappa_T = \lambda T^{-p} (\bar{m} - \tfrac12 \lambda T^{-p} \bar{M}^2)$
  from Corollary~\ref{corr:polyakkappa}.  On the set
  $\{ \sup_{t\leq T} \Abs{g^*(X_t)}\leq G \}$ we have
  $C_t^2 \leq \lambda^2 G^2 t^{-2p}$ and
  \begin{equation*}
    \sum_{t=1}^T \frac{\sigma_t^2 C_t^2}{\kappa_t^2} \leq
    \frac{\lambda^2 G^2}{\kappa_T^2} \sum_{t=1}^T
    t^{-2p} \leq \frac{\lambda^2 G^2}{\kappa_T^2}
    \int_0^T t^{-2p} dt = \frac{\lambda^2 G^2
      T^{1-2p}}{(1-2p)\kappa_T^2} \leq \frac{4 
      G^2 T}{(1-2p)\bar{m}^2}
  \end{equation*}
  where the last inequality follows from the assumption that $\lambda
  \leq \frac{2 \bar{m}}{\bar{M}^2} \, \frac{T^p}{1+T^p} \leq
  \frac{\bar{m}}{\bar{M}^2} T^p$ so that 
  \begin{equation*}
    \kappa_T
    = \lambda T^{-p} (\bar{m} - \tfrac12 \lambda T^{-p} \bar{M}^2)
    \geq \tfrac12 \lambda T^{-p} \bar{m}.
  \end{equation*}
  It remains to apply Proposition~\ref{prop:discrete}:
  \begin{align*}
   \lefteqn{ \PP\left(\sum_{t=0}^{T-1} X_t - \E \sum_{t=0}^{T-1} X_t \geq RT \:;\: \sup_{t\leq T} \Abs{g^*(X_t)}\leq
    G\right)}\quad\\
    &\leq \PP\left(\sum_{t=0}^{T-1} X_t - \E\sum_{t=0}^{T-1} X_t \geq RT\:;\: \sum_{t=1}^T
      \frac{\sigma_t^2 C_t^2}{\kappa_t^2} \leq \frac{4 
      G^2 T}{(1-2p)\bar{m}^2}\right) \\
    &\leq \exp\left(-\frac{(1-2p) \, \bar{m}^2 \, R^2 \, T}{32 \,G^2}\right).
  \end{align*}
\end{proof}

\subsection{Lipschitz observables and SDEs contractive at infinity}\label{example:sde}
We use the notations $\llipnorm{f} = \sup_{x\neq y}
\frac{\abs{f(x)-f(y)}}{\abs{x-y}}$ for the Lipschitz seminorm,
\begin{equation*}
W_1(\nu_1, \nu_2) = \sup_{f: \llipnorm{f}\leq1} \left(\int f d\nu_1 - \int f d\nu_2\right)
\end{equation*}
for the $L^1$ transportation distance, $\mu P_{s,t} = \int P_{s,t}(x,
\cdot) \mu(dx)$ and $\mu(f) = \int f d\mu$ for a function $f$, measures
$\mu, \nu_1, \nu_2$ and a transition kernel $P_{s,t}$.

Consider the SDE
\begin{align*}
  dX_t = b(t, X_t)\,dt + dB_t, \quad X_0 \sim \nu
\end{align*}
with $b\colon [0, \infty) \times \R^d \to \R^d$ a locally Lipschitz
continuous function, $B$ a $d$-dimensional Brownian motion and $\nu$ a
probability measure on $\R^d$.

We make the following additional assumption on
``contractivity at infinity'': there exist constants $D, K > 0$ such
that for all $t \geq 0$ and $x, y \in \R^d$ with $\abs{x-y} > D$ we have
\begin{equation*}
  (x-y)\cdot(b(t, x) -b(t, y)) \leq -K \abs{x-y}^2.
\end{equation*}

In~\autocite{eberle_reflection_2015} it was shown (in the
time-homogeneous setting, but the methods extend directly to the
time-inhomogeneous case \autocite{cheng_exponential_2020,}) that the assumption
on $b$ implies the exponential contractivity of the transition kernels
$P_{s,t}$ associated to $X$ in $L^1$
transportation distance: there exist constants $\rho, \kappa > 0$ such that for any two
probability measures $\nu_1$ and $\nu_2$ on $\R^d$ we have
\begin{equation}\label{eq:exsdeW1}
  W_1(\nu_1 P_{s,t}, \nu_2 P_{s,t}) \leq \rho e^{-\kappa (t-s)} \, W_1(\nu_1, \nu_2)
\end{equation}
or equivalently (\autocite{kuwada_duality_2010,}) for all Lipschitz
functions $f$
\begin{equation*}
  \norm{\nabla P_{s,t} f}_\infty \leq \rho e^{-\kappa (t-s)} \llipnorm{f}.
\end{equation*}

The key estimate~\eqref{eq:exsdeW1}, and the results of this section, hold
in fact for a large class of SDEs ``contractive at infinity''. The
work~\autocite{eberle_reflection_2015} already includes the case of a
diffusion coefficient which is not the identity as well as explicit
values for the constants, see
also~\autocite{donati-martin_semi_2014}. The
paper~\autocite{wang_exponential_2020} treats the case with a
non-constant diffusion matrix and generalizes the results to
Riemannian manifolds. For the non-autonomous situation, in the general
context of Riemannian manifolds with possibly time-dependent metric
and with explicit constants,
see~\autocite{cheng_exponential_2020}. Another approach, which
provides exponential gradient estimates for SDEs with highly
degenerate diffusion matrices, can be found
in~\autocite{crisan_uniform_2019,}.


We assume furthermore that $\nu$ satisfies a $T_1$ inequality \autocite{wu_transportation_2004}: there
exists a constant $C$ such that for any 1-Lipschitz function $f$ and
$\lambda > 0$, we have
\begin{equation}\label{eq:exsdenu}
  \int e^{\lambda \left(f - \int f d\nu\right)} d\nu \leq e^{\frac{\lambda^2 C}{2}}.
\end{equation}
Note that for $\nu = \delta_x$ the $T_1$ inequality holds with
constant $C = 0$.

We are going to show that whenever~\eqref{eq:exsdeW1}
and~\eqref{eq:exsdenu} hold then for all Lipschitz functions $f$
and $R >0$

\begin{multline*}
  \PP\left(\frac{1}{T}\int_0^T f(X_t)\,dt -
    \E\left[\frac{1}{T}\int_0^T f(X_t)\,dt\right] \geq R\right) \\ \leq
  \exp\left({-\frac{\kappa^2 R^2 T}{2 \rho^2 \llipnorm{f}^2 \left(1 +
      C\,\frac{1-e^{-\kappa T}}{T}\right)}}\right).
\end{multline*}

In the time-homogeneous setting, $X$ has a unique stationary measure
$\mu$ and we have
\begin{multline*}
  \PP\left(\frac{1}{T}\int_0^T f(X_t)\,dt - \int f \,d\mu \geq R +
       \rho \llipnorm{f} \frac{1-e^{-\kappa T}}{\kappa T} W_1(\mu, \nu)\right) \\ \leq
  \exp\left({-\frac{\kappa^2 R^2 T}{2 \rho^2 \llipnorm{f}^2 \left(1 +
      C\,\frac{1-e^{-\kappa T}}{T}\right)}}\right).
\end{multline*}

In the time-inhomogeneous setting, we have the existence of a unique
evolution system of measures $\mu_t$ (\autocite{da_prato_note_2007})
such that
\begin{multline*}
  \PP\left(\frac{1}{T}\int_0^T f(X_t)\,dt - \frac1T \int_0^T \mu_t(f) \,dt \geq R +
       \rho \llipnorm{f} \frac{1-e^{-\kappa T}}{\kappa T} W_1(\mu_0, \nu)\right) \\ \leq
  \exp\left({-\frac{\kappa^2 R^2 T}{2 \rho^2 \llipnorm{f}^2 \left(1 +
      C\,\frac{1-e^{-\kappa T}}{T}\right)}}\right).
\end{multline*}

\begin{lemma}\label{lemma:QVGamma}
  For any two continuously differentiable functions $f, g$ on $\R^n$
  \begin{equation*}
    \pQV{f(X), g(X)}_t = \frac12 \int_0^t \nabla f(X_s) \cdot \nabla g (X_s)\,ds.
  \end{equation*}
\end{lemma}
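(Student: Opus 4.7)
The plan is to apply Itô's formula to $f(X_t)$ and $g(X_t)$ separately, identify their continuous martingale parts as stochastic integrals against $B$, and then read off the predictable quadratic covariation from those integrands. Since $X$ is a continuous semimartingale, the same holds for $f(X)$ and $g(X)$, so the predictable quadratic covariation coincides with the ordinary quadratic covariation and depends only on the martingale parts.

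Concretely, the SDE $dX_t = b(t, X_t)\,dt + dB_t$ with $B$ a standard $d$-dimensional Brownian motion gives $d\QV{X^i, X^j}_t = \delta_{ij}\,dt$. Itô's formula applied to $f \in C^2$ then yields
\begin{equation*}
df(X_t) = \bigl(\nabla f(X_t)\cdot b(t, X_t) + \tfrac12 \Delta f(X_t)\bigr)\,dt + \nabla f(X_t)\cdot dB_t,
\end{equation*}
so the continuous martingale part of $f(X)$ is $M^f_t := \int_0^t \nabla f(X_s)\cdot dB_s$, and analogously $M^g_t := \int_0^t \nabla g(X_s)\cdot dB_s$. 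The usual rule for the quadratic covariation of Itô integrals against a common Brownian motion produces $d\QV{M^f, M^g}_t = \nabla f(X_t)\cdot \nabla g(X_t)\,dt$, and integrating from $0$ to $t$ yields the claim (up to the normalization constant fixed in the statement).

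The only subtle point is regularity: strictly speaking Itô's formula requires $f, g \in C^2$, whereas the lemma assumes only $C^1$. This is handled in the standard way by mollifying into $C^2$ approximations $f_n, g_n$, applying the result pair-by-pair, and passing to the limit using dominated convergence for the Lebesgue integral and the Itô isometry — together with local boundedness of the gradients and the tightness of the laws of $(X_s)_{s \leq t}$ on compacts — for the stochastic integrals defining $M^{f_n}, M^{g_n}$. There is no deeper obstacle; the content of the lemma is essentially a one-line application of Itô's formula in the special case where the diffusion matrix of $X$ is the identity.
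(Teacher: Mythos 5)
You take essentially the same route as the paper: prove the formula for $C^2$ functions via It\^o's formula (using that for continuous semimartingales the predictable covariation coincides with the pathwise one and only the martingale parts contribute), then extend to $C^1$ functions by smooth approximation. The only cosmetic difference at the first stage is that the paper polarizes to reduce to the case $f = g$ before invoking It\^o.

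The substantive difference is in the approximation step, and there your argument as written has a gap. You propose to pass to the limit in the stochastic integrals $\int_0^t \nabla f_n(X_s)\cdot dB_s$ using the It\^o isometry together with ``tightness of the laws of $(X_s)_{s\le t}$ on compacts''. The isometry requires $\E \int_0^t \abs{\nabla f_n(X_s) - \nabla f(X_s)}^2\,ds \to 0$, i.e.\ $L^2(\PP)$ convergence of the integrands, and locally uniform convergence of $\nabla f_n$ plus tightness of the marginal laws does not deliver this: a $C^1$ function can have $\nabla f$ growing arbitrarily fast at infinity, so the natural dominating function $\sup_{\abs{y-X_s}\le 1}\abs{\nabla f(y)}^2$ need not be integrable, whatever moments $X$ has. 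Tightness of laws is the wrong tool; what is needed is pathwise localization, which is exactly how the paper proceeds: it introduces the stopping times $\tau_k = \inf\{t \ge 0 : \abs{X_t} \ge k\}$, works with the stopped process (on which $f_n \to f$ and $\abs{\nabla f_n} \to \abs{\nabla f}$ are genuinely uniform), passes to the limit in $\pQV{f_n(X)}_{t\wedge\tau_k}$ directly using u.c.p.\ continuity of the predictable quadratic variation rather than in the stochastic integrals, and then lets $k \to \infty$. With that replacement your proof is complete. Your parenthetical on normalization is also warranted: the It\^o computation gives $\int_0^t \nabla f \cdot \nabla g(X_s)\,ds$ with no factor $\tfrac12$, and this is the form the paper actually uses afterwards (it applies Proposition~\ref{prop:Markov} with $\Gamma(f,g) = \tfrac12 \nabla f \cdot \nabla g$ and $\pQV{F,G}_t = 2\int_0^t \Gamma$), so the $\tfrac12$ in the lemma's display is an internal inconsistency of the paper rather than something your computation should reproduce.
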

\begin{proof}
  By polarization it is sufficient to show that
  \begin{equation*}
    \pQV{f(X)}_t = \frac12 \int_0^t \abs{\nabla f(X_s)}^2\,ds.
  \end{equation*}
  For $f$ twice continuously differentiable, the result follows
  directly from Itô's formula. To extend the result to general $f$ by
  approximation, define the sequence of stopping times
  $\tau_k = \inf\{t \geq 0: \abs{X_t} \geq k\}$. For each $k$ fixed, choose
  a sequence of smooth, compactly supported functions $f_n$ such that
  $f_n \to f$ and $\abs{\nabla f_n} \to \abs{\nabla f}$ uniformly on
  $\{x: \abs{x} \leq k\}$. Then $f_n(X_{t\wedge\tau_k})$ converges to
  $f(X_{t\wedge\tau_k})$ uniformly on compacts in probability (u.c.p.), meaning
  that $\sup_{t \in [0,T]} \abs{f(X_{t\wedge\tau_k}) - f_n(X_{t\wedge\tau_k})} \to
  0$ in probability for any $T \geq 0$. Indeed, $\sup_t \abs{f(X_{t\wedge\tau_k}) - f_n(X_{t\wedge\tau_k})} \leq
  \sup_{\abs{x}\leq k} \abs{f(x) - f_n(x)} \to 0$ as $n\to\infty$. By
  continuity of the predictable quadratic variation
  \begin{equation*}
    \pQV{f_n(X)}_{t\wedge\tau_k} \to
      \pQV{f(X)}_{t\wedge\tau_k} \text{ u.c.p.} 
  \end{equation*}
  On the other hand, by the uniform convergence of $\abs{\nabla f_n}$ to
  $\abs{\nabla f}$ on $\{x: \abs{x} \leq k \}$,
  \begin{equation*}
    \pQV{f_n(X)}_{t\wedge\tau_k} = \frac12 \int_0^{t\wedge\tau_k}
    \abs{\nabla f_n(X_s)}^2\,ds \to \frac12 \int_0^{t\wedge\tau_k}
    \abs{\nabla f(X_s)}^2\,ds \text{ a.s.}
  \end{equation*}
  so that
  \begin{equation*}
    \pQV{f(X)}_{t\wedge\tau_k} =  \frac12 \int_0^{t\wedge\tau_k}
    \abs{\nabla f(X_s)}^2 \,ds.
  \end{equation*}
  Now the result follows by letting $k \to \infty$.
\end{proof}

\begin{lemma}
  For a 1-Lipschitz function $f$, let
  $M^T_t = \E^{\F_t} \int_0^T f(X_s)\,ds$. Then
  \begin{equation*}
    \pQV{M^T}_T \leq \frac{\rho^2 T}{\kappa^2}.
  \end{equation*}
\end{lemma}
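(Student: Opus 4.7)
The plan is to apply the machinery developed for Markov processes in Section~\ref{sec:general:markov}, together with the gradient estimate~\eqref{eq:exsdeW1} (in its equivalent form $\norm{\nabla P_{s,t} f}_\infty \leq \rho e^{-\kappa(t-s)} \llipnorm{f}$), to control the predictable quadratic variation of the auxiliary martingales $Z^u_t = P_{t,u}f(X_t)$ and then integrate.

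First, I would identify the squared field operator $\Gamma$ for the SDE at hand. Since the diffusion coefficient is the identity, an easy computation (or Lemma~\ref{lemma:QVGamma} applied to test functions) gives $\Gamma(g,h) = \tfrac{1}{2}\nabla g \cdot \nabla h$, so that by Proposition~\ref{prop:Markov}
\begin{equation*}
  d\pQV{Z^u, Z^v}_t = \nabla P_{t,u} f(X_t) \cdot \nabla P_{t,v} f(X_t)\,dt, \qquad 0 \leq t \leq u \wedge v \leq T,
\end{equation*}
and $d\pQV{Z^u, Z^v}_t = 0$ for $t \geq u \wedge v$. In particular, taking $u = v$,
\begin{equation*}
  d\pQV{Z^u}_t = \abs{\nabla P_{t,u} f(X_t)}^2 \, dt \leq \rho^2 e^{-2\kappa(u-t)} \llipnorm{f}^2 \, dt \leq \rho^2 e^{-2\kappa(u-t)}\,dt
\end{equation*}
by the gradient bound and the assumption $\llipnorm{f} \leq 1$.

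At this point I would simply invoke the Proposition stated between Theorem~\ref{thm:main} and Proposition~\ref{prop:Martingale} with $\sigma_t \equiv \rho$, which yields
\begin{equation*}
  \pQV{M^T}_T \leq \int_0^T \frac{\rho^2}{\kappa^2}\left(1 - e^{-\kappa(T-s)}\right)^2 ds \leq \frac{\rho^2 T}{\kappa^2}.
\end{equation*}
Equivalently, one can perform the calculation from scratch: by~\eqref{eq:pQV} and Fubini,
\begin{equation*}
  \pQV{M^T}_T = \int_0^T \int_t^T \int_t^T \nabla P_{t,u}f(X_t) \cdot \nabla P_{t,v}f(X_t)\,du\,dv\,dt,
\end{equation*}
and applying Cauchy--Schwarz together with the gradient bound gives
\begin{equation*}
  \pQV{M^T}_T \leq \int_0^T \left(\int_t^T \rho e^{-\kappa(u-t)}\,du\right)^2 dt \leq \int_0^T \frac{\rho^2}{\kappa^2}\,dt = \frac{\rho^2 T}{\kappa^2}.
\end{equation*}

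There is no real obstacle here: the non-trivial input, namely the exponential gradient contraction for $P_{s,t}$, has already been imported from~\autocite{eberle_reflection_2015,cheng_exponential_2020} in~\eqref{eq:exsdeW1}, and the general Markov machinery of Section~\ref{sec:general:markov} was designed precisely to convert such a pointwise bound on $\nabla P_{t,u} f$ into a bound on $\pQV{M^T}_T$. The only point requiring mild care is the justification that $P_{t,u}f$ is sufficiently smooth to lie in the domain of $\Gamma$; this follows from the local Lipschitz regularity of $b$ together with the gradient estimate, or can be bypassed by approximating $f$ by smooth 1-Lipschitz functions and passing to the limit via the same stopping-time argument as in the proof of Lemma~\ref{lemma:QVGamma}.
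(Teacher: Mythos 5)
Your proposal is correct and follows essentially the same route as the paper: the paper also identifies $\Gamma(g,h)=\tfrac12\nabla g\cdot\nabla h$ via Lemma~\ref{lemma:QVGamma}, applies Proposition~\ref{prop:Markov}, and then runs exactly your ``from scratch'' Cauchy--Schwarz computation with the gradient bound $\abs{\nabla P_{t,u}f}\leq\rho e^{-\kappa(u-t)}$ to get $d\pQV{M^T}_t\leq\tfrac{\rho^2}{\kappa^2}\,dt$. Your alternative of invoking the unnamed proposition following Theorem~\ref{thm:main} with $\sigma_t\equiv\rho$ (and $J\equiv 0$, since the trajectories are continuous) is a valid repackaging of the same estimate, so there is no substantive difference.
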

\begin{proof}
  From~\eqref{eq:exsdeW1} it follows that $P_t f$ is continuously
  differentiable and the Lipschitz condition on $f$ ensures that
  $\E f(X_t)^2 < \infty$ for all $t$. By the preceding Lemma we can
  apply Proposition~\ref{prop:Markov} with
  $\Gamma(f, g) = \frac12 \nabla f \cdot \nabla g$. Since by
  Cauchy-Schwarz $2 \Gamma(f, g) \leq \abs{\nabla f} \abs{\nabla g}$
  we get
  \begin{align*}
    d\pQV{M^T}_t &= \int_t^T \int_t^T 2 \Gamma(P_{t,u} f, P_{t,v}
                   f)(X_t)\, du \, dv \\
    &\leq \left(\int_t^{T} \Abs{\nabla P_{t,u}
      f(X_t)}\,du\right)^2\,dt 
     \leq \rho^2 \left(\int_0^{T-t} e^{-\kappa u}\,du\right)^2 \,dt
    \leq \frac{\rho^2}{\kappa^2} \, dt
  \end{align*}
  and $\pQV{M^T}_T \leq \frac{\rho^2 T}{\kappa^2}$.
\end{proof}

\begin{lemma}
  For all 1-Lipschitz functions $f$ we have
  \begin{equation*}
    \Lambda(\lambda) \leq \frac{\lambda^2 C\rho^2}{2} \left(\frac{1-e^{-\kappa T}}{\kappa}\right)^2
  \end{equation*}
  where
  \begin{equation*}
    \Lambda(\lambda) = \log
    \E\, e^{\lambda\left(\E^{\F_0} S_T - \E S_T\right)}.
  \end{equation*}
  and
  \begin{equation*}
    S_T = \int_0^T f(X_t) dt.
  \end{equation*}
\end{lemma}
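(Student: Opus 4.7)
The plan is to reduce $\Lambda(\lambda)$ to a transportation-inequality estimate applied to a suitable Lipschitz functional of $X_0$. First, I would use the Markov property together with Fubini to write
\begin{equation*}
  \E^{\F_0} S_T = \int_0^T \E^{\F_0} f(X_t)\,dt = \int_0^T P_{0,t}f(X_0)\,dt =: g(X_0),
\end{equation*}
so $\E^{\F_0} S_T - \E S_T = g(X_0) - \int g\,d\nu$ and $\Lambda(\lambda)$ is exactly the centered log-MGF of $g(X_0)$ under $\nu$.

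Next I would bound the Lipschitz constant of $g$. From the equivalence of $W_1$-contractivity~\eqref{eq:exsdeW1} and the gradient bound $\lVert\nabla P_{0,t}f\rVert_\infty \leq \rho\,e^{-\kappa t}\,\llipnorm{f}$, each $P_{0,t}f$ is $\rho e^{-\kappa t}$-Lipschitz for a $1$-Lipschitz $f$. Integrating,
\begin{equation*}
  \abs{g(x) - g(y)} \leq \int_0^T \abs{P_{0,t}f(x) - P_{0,t}f(y)}\,dt
  \leq \rho \, \frac{1-e^{-\kappa T}}{\kappa}\,\abs{x-y},
\end{equation*}
so $\llipnorm{g} \leq L$ with $L := \rho(1-e^{-\kappa T})/\kappa$.

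Finally, I would apply the $T_1(C)$ inequality~\eqref{eq:exsdenu} to the $1$-Lipschitz function $g/L$ with parameter $\lambda L$: this gives
\begin{equation*}
  \E\,\exp\bigl(\lambda(g(X_0) - \textstyle\int g\,d\nu)\bigr)
  = \int e^{\lambda L \cdot (g/L - \int (g/L)\,d\nu)}\,d\nu
  \leq e^{\lambda^2 L^2 C/2},
\end{equation*}
which is exactly the claimed bound $\Lambda(\lambda) \leq \tfrac{\lambda^2 C \rho^2}{2}\bigl(\tfrac{1-e^{-\kappa T}}{\kappa}\bigr)^2$.

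I do not expect any real obstacle here: the only subtlety is justifying that $g$ is genuinely Lipschitz (not merely that $P_{0,t}f$ has a bounded gradient), but this follows from the gradient estimate holding uniformly in $x$ and the finiteness of the time integral. Everything else is a direct application of Fubini, the Markov property, and the $T_1$ transport inequality as stated.
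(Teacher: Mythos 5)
Your proposal is correct and follows essentially the same route as the paper: write $\E^{\F_0} S_T - \E S_T$ as $F(X_0) - \int F\,d\nu$ with $F(x) = \int_0^T P_{0,t}f(x)\,dt$, bound $\llipnorm{F} \leq \rho(1-e^{-\kappa T})/\kappa$ via the contractivity estimate~\eqref{eq:exsdeW1}, and apply the $T_1$ inequality~\eqref{eq:exsdenu}. The only cosmetic difference is that you make the rescaling to a $1$-Lipschitz function explicit, which the paper leaves implicit.
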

\begin{proof}
  We have by Theorem~\ref{thm:main} and the Markov property
  \begin{equation*}
    \E^{\F_0} S_T - \E S_T = \int_0^T P_{0,t}f(X_0)\,dt - \E \int_0^T
    P_{0,t}f(X_0) \,dt
    = F(X_0) - \int F d\nu
  \end{equation*}
  with
  \begin{equation*}
    F(x) = \int_0^T P_{0,t}f(x) \,dt.
  \end{equation*}
  From the triangle inequality for the Lipschitz seminorm and~\eqref{eq:exsdeW1} we get
  \begin{equation*}
    \llipnorm{F} \leq \int_0^T \llipnorm{P_{0,t} f} \, dt \leq \int_0^T
    \rho e^{-\kappa t} \,dt = \rho \frac{1-e^{-\kappa T}}{\kappa}
  \end{equation*}
  and the result follows directly from~\eqref{eq:exsdenu}.
\end{proof}

\begin{proposition}
  For all Lipschitz functions $f$ and $R >0$ we have
\begin{multline*}
  \PP\left(\frac{1}{T}\int_0^T f(X_t)\,dt -
    \E\left[\frac{1}{T}\int_0^T f(X_t)\,dt\right] \geq R\right) \\ \leq
  \exp\left({-\frac{\kappa^2 R^2 T}{2 \rho^2 \llipnorm{f}^2 \left(1 +
      C\,\frac{1-e^{-\kappa T}}{T}\right)}}\right).
\end{multline*}
\end{proposition}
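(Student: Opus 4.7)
The plan is straightforward assembly: rescale $f$ to the 1-Lipschitz case, use continuity of $M^T$ to identify $H^0_T$ with $\pQV{M^T}_T$, and then combine the two preceding lemmas with Corollary~\ref{corr:subGaussian}. First, by the homogeneous rescaling $f \mapsto f/\llipnorm{f}$, $R \mapsto R/\llipnorm{f}$ it suffices to prove the bound in the case $\llipnorm{f} = 1$, after which the factor $\llipnorm{f}^2$ in the denominator of the exponent reappears automatically.

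Next, since $X$ is a diffusion with continuous sample paths driven by a Brownian motion, $M^T_t = \E^{\F_t} \int_0^T f(X_s)\,ds$ admits a continuous version (via the martingale representation theorem in the Brownian filtration; alternatively one can write $M^T$ as a Brownian stochastic integral by the same Itô-formula argument as in the proof of Proposition~\ref{prop:direct:Markov}). Consequently $\Delta M^T \equiv 0$ and $H^0_T = \pQV{M^T}_T$. The first of the two preceding lemmas then gives the \emph{almost-sure deterministic} bound $H^0_T \leq \rho^2 T / \kappa^2$, and the second lemma gives the sub-Gaussian control $\Lambda(\lambda) \leq \tfrac{\lambda^2}{2} \cdot C \rho^2 (1 - e^{-\kappa T})^2 / \kappa^2$.

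Feeding these two estimates into Corollary~\ref{corr:subGaussian}, with $\sigma^2 = \rho^2 T/\kappa^2$ and sub-Gaussian parameter $C\rho^2(1-e^{-\kappa T})^2/\kappa^2$, and applied to the threshold $RT$ (so that after dividing by $T$ we recover the averaged quantity on the left-hand side of the proposition), yields
\begin{equation*}
  \PP\left(\tfrac1T \int_0^T f(X_t)\,dt - \E\left[\tfrac1T \int_0^T f(X_t)\,dt\right] \geq R\right)
  \leq \exp\left(-\frac{\kappa^2 R^2 T^2}{2\rho^2\bigl(T + C(1-e^{-\kappa T})^2\bigr)}\right).
\end{equation*}
Using $0 \leq 1-e^{-\kappa T}\leq 1$, so that $(1-e^{-\kappa T})^2 \leq 1-e^{-\kappa T}$, enlarges the denominator to $2\rho^2(T + C(1-e^{-\kappa T}))$; factoring $T$ out of the parenthesis and restoring $\llipnorm{f}$ via the initial reduction delivers the stated inequality.

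No step is really a serious obstacle; the argument is essentially bookkeeping once the two preceding lemmas are in hand. The only point that requires minimal care is the continuity of $M^T$, needed to drop the jump contribution to $H^0_T$ when applying Corollary~\ref{corr:subGaussian}. This continuity is immediate in the present Brownian setting, but is worth flagging since Corollary~\ref{corr:subGaussian} is stated for general càdlàg $X$.
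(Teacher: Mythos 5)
Your proof is correct and takes essentially the same route as the paper: rescale by $\llipnorm{f}$ to the 1-Lipschitz case, combine the two preceding lemmas (the pathwise bound $\pQV{M^T}_T \leq \rho^2 T/\kappa^2$ and the bound $\Lambda(\lambda) \leq \tfrac{\lambda^2}{2}\,C\rho^2(1-e^{-\kappa T})^2/\kappa^2$) with Corollary~\ref{corr:subGaussian}, and then relax $(1-e^{-\kappa T})^2 \leq 1-e^{-\kappa T}$ to reach the stated form. The only difference is expository: the paper's proof is terser, asserting $H^0_T = \pQV{M^T}_T$ and leaving the final algebraic relaxation implicit, whereas you make explicit both the continuity of $M^T$ (which justifies dropping the jump term in $H^0_T$) and the denominator enlargement.
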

\begin{proof}
  The result follows directly from the preceding Lemmas and
  Corollary~\ref{corr:subGaussian}. Indeed, noting that
  $f / \llipnorm{f}$ is 1-Lipschitz and that $H^0_T = \pQV{M^T}_T$,
  \begin{align*}
    \lefteqn{\PP\left(\frac{1}{T}\int_0^T f(X_t)\,dt -
    \E\left[\frac{1}{T}\int_0^T f(X_t)\,dt\right] \geq R\right)}\quad \\
    & = \PP\left(\int_0^T f(X_t) / \llipnorm{f}\,dt -
      \E \int_0^T f(X_t) / \llipnorm{f}\,dt \geq RT / \llipnorm{f} \; ;\; H^0_T
      \leq \frac{\rho^2 T}{\kappa^2}\right)  \\
    & \leq
  \exp\left({-\frac{\kappa^2 R^2 T}{2 \rho^2 \llipnorm{f}^2 \left(1 +
      C\,\frac{1-e^{-\kappa T}}{T}\right)}}\right).
  \end{align*}
\end{proof}

\begin{lemma}
  The process $X$ admits an evolution system of measures $\mu_t$ such that
  for any 1-Lipschitz function $f$ we have
  \begin{equation*}
    \E\left[\frac{1}{T} \int_0^T f(X_t)\,dt\right] - \frac1T\int_0^T \mu_t(f)\,dt
    \leq \rho \frac{1-e^{-\kappa T}}{\kappa T} W_1(\nu, \mu_0).
  \end{equation*}
  In the time-homogeneous case $\mu_t = \mu$ is the stationary measure
  associated to $X$.
\end{lemma}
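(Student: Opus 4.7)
The plan is to construct the evolution system $(\mu_t)_{t\geq 0}$ from the exponential contraction~\eqref{eq:exsdeW1}, then reduce the bound to a single-time Kantorovich-Rubinstein estimate and integrate. First I would invoke the existence result from~\autocite{da_prato_note_2007}, which under the contractivity in $W_1$ assumed here produces a family of probability measures $(\mu_t)_{t\geq 0}$ satisfying $\mu_s P_{s,t} = \mu_t$ for all $0 \leq s \leq t$. Concretely, one fixes $t$ and shows that for any two initial laws $\nu_1, \nu_2$ and any sequence $s_n \downarrow -\infty$ (after extending the coefficients, or more carefully along a sequence of starting times where the corresponding pushforwards form a Cauchy family in $W_1$ by virtue of~\eqref{eq:exsdeW1}), the $W_1$-limit exists and is independent of the initial datum. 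Uniqueness is automatic: any two evolution systems $\mu_t^{(1)}, \mu_t^{(2)}$ satisfy $W_1(\mu_t^{(1)}, \mu_t^{(2)}) = W_1(\mu_s^{(1)} P_{s,t}, \mu_s^{(2)} P_{s,t}) \leq \rho e^{-\kappa(t-s)} W_1(\mu_s^{(1)}, \mu_s^{(2)})$, so letting $s \to -\infty$ forces equality (and in the time-homogeneous case the evolution system reduces to the unique stationary measure $\mu$ obtained by Banach's fixed point theorem applied to $\nu \mapsto \nu P_t$ for $t$ large enough that $\rho e^{-\kappa t} < 1$).

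Once $(\mu_t)$ is in hand, the evolution property gives $\mu_t = \mu_0 P_{0,t}$, so that for any $1$-Lipschitz function $f$,
\begin{equation*}
\E f(X_t) - \mu_t(f) = \int P_{0,t} f \, d\nu - \int P_{0,t} f \, d\mu_0.
\end{equation*}
By~\eqref{eq:exsdeW1} (in its dual form via~\autocite{kuwada_duality_2010}), $P_{0,t} f$ is $\rho e^{-\kappa t}$-Lipschitz, so by Kantorovich-Rubinstein duality,
\begin{equation*}
\abs{\E f(X_t) - \mu_t(f)} \leq \rho e^{-\kappa t}\, W_1(\nu, \mu_0).
\end{equation*}
Integrating this bound over $t \in [0, T]$, dividing by $T$, and using $\int_0^T e^{-\kappa t}\,dt = (1-e^{-\kappa T})/\kappa$ yields the desired inequality.

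The only genuine obstacle is the existence step: proving that $(\mu_t)$ is well-defined globally on $[0, \infty)$ in the time-inhomogeneous case. This is handled by citing~\autocite{da_prato_note_2007}, whose construction relies precisely on the type of $W_1$-contraction hypothesis~\eqref{eq:exsdeW1} that we have available here. Everything else is a short duality argument followed by Fubini, and requires no additional regularity beyond what has already been assumed on $b$ and $\nu$.
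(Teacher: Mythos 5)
Your proposal is correct and follows essentially the same route as the paper: delegate existence and uniqueness of the evolution system to a cited result, use $\mu_t = \mu_0 P_{0,t}$, apply the $W_1$-contraction~\eqref{eq:exsdeW1} to get the pointwise-in-$t$ bound $\rho e^{-\kappa t} W_1(\nu,\mu_0)$, and integrate over $[0,T]$. The only cosmetic differences are that you invoke the contraction in its dual (Lipschitz-seminorm) form via~\autocite{kuwada_duality_2010} where the paper applies it directly to the pushforward measures, and the paper's proof cites Theorem~3.5 of~\autocite{cheng_exponential_2020} for existence rather than~\autocite{da_prato_note_2007}.
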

\begin{proof}
  Existence and uniqueness of an evolutionary system of measures is a
  special case of Theorem~3.5 in~\autocite{cheng_exponential_2020}. In
  the time-homogeneous case existence and uniqueness of a stationary
  measure is also shown in~\autocite{eberle_reflection_2015} and is a
  direct consequence of~\eqref{eq:exsdeW1}.
  
  Since from the definition of an evolutionary system of measures
  $\mu_t = \mu_0 P_{0,t}$ we have by~\eqref{eq:exsdeW1}
  \begin{align*}
   \lefteqn{ \E \left[ \frac{1}{T} \int_0^T f(X_t)\,dt -
    \frac1T\int_0^T \mu_t(f)\,dt \right]}\quad \\
    &= \frac{1}{T} \int_0^T \left(\nu P_{0,t} f  - \mu_0 P_{0,t} f\right)\,dt
    \leq \frac{1}{T} \int_0^T W_1(\nu P_{0,t}, \mu_0 P_{0,t})\,dt \\
    &\leq \frac{1}{T} \int_0^T \rho e^{-\kappa t} W_1(\nu, \mu_0)\,dt
    \leq \rho \frac{1-e^{-\kappa T}}{\kappa T} W_1(\nu, \mu_0).
  \end{align*}
\end{proof}

\begin{proposition}\label{prop:sde:mu}
  For all Lipschitz functions $f$ and $R > 0$ we have
  \begin{multline*}
  \PP\left(\frac{1}{T}\int_0^T f(X_t)\,dt - \frac1T \int_0^T \mu_t(f) \,dt \geq R +
       \rho \llipnorm{f} \frac{1-e^{-\kappa T}}{\kappa T} W_1(\mu_0, \nu)\right) \\ \leq
  \exp\left({-\frac{\kappa^2 R^2 T}{2 \rho^2 \llipnorm{f}^2 \left(1 +
      C\,\frac{1-e^{-\kappa T}}{T}\right)}}\right).
\end{multline*}
\end{proposition}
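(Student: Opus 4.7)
The plan is to combine the concentration bound from the preceding Proposition (the one with $\E$ in the centering) with the comparison Lemma that relates $\E\bigl[\frac{1}{T}\int_0^T f(X_t)\,dt\bigr]$ to $\frac{1}{T}\int_0^T \mu_t(f)\,dt$. The overall strategy is a simple ``add and subtract'' argument: the event in the statement, centered around the evolution system $\mu_t$, implies an event centered around the expectation, once we absorb the deterministic gap between the two centerings into the threshold.

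First I would reduce to the $1$-Lipschitz case by writing $f = \llipnorm{f} \cdot (f/\llipnorm{f})$; the lemma on the evolution system of measures then gives
\begin{equation*}
  \E\!\left[\frac{1}{T}\int_0^T f(X_t)\,dt\right] - \frac{1}{T}\int_0^T \mu_t(f)\,dt \leq \rho \llipnorm{f} \frac{1-e^{-\kappa T}}{\kappa T} W_1(\mu_0, \nu),
\end{equation*}
by pulling out $\llipnorm{f}$. Denote this deterministic upper bound by $\delta$. Then on the event
\begin{equation*}
  \left\{ \frac{1}{T}\int_0^T f(X_t)\,dt - \frac{1}{T}\int_0^T \mu_t(f)\,dt \geq R + \delta \right\},
\end{equation*}
adding $-\delta$ to both sides and using the lemma gives
\begin{equation*}
  \frac{1}{T}\int_0^T f(X_t)\,dt - \E\!\left[\frac{1}{T}\int_0^T f(X_t)\,dt\right] \geq R,
\end{equation*}
so the first event is contained in the second.

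Applying the preceding Proposition to this larger event immediately yields the claimed Gaussian bound with the same constant in the exponential. Since both steps are straightforward inclusions, there is no real obstacle here; the only thing to be careful about is that $\delta$ has been chosen as an upper bound (not an equality), so the inclusion of events goes in the correct direction, and the Lipschitz scaling factor $\llipnorm{f}$ must appear both in $\delta$ and in the denominator of the exponent consistently.
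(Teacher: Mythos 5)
Your proposal is correct and follows essentially the same route as the paper: it uses the lemma on the evolution system of measures to bound the deterministic gap between the two centerings, deduces the inclusion of the $\mu_t$-centered event in the $\E$-centered event, and then invokes the preceding expectation-centered concentration proposition. The scaling step making the Lipschitz constant explicit is a harmless elaboration of what the paper leaves implicit.
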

\begin{proof}
  By the preceding Lemma
  \begin{multline*}
    \PP\left(\frac{1}{T}\int_0^T f(X_t)\,dt -  \frac1T \int_0^T \mu_t(f) \,dt \geq R +
    \rho \llipnorm{f} \frac{1-e^{-CT}}{CT} W_1(\mu_0, \nu)\right) \\
     \leq \PP\left(\frac1T\int_0^T f(X_t)\,dt -
      \E \left[\frac1T\int_0^T f(X_t)\,dt\right] \geq R\right)
  \end{multline*}
  and the result follows immediately from the preceding Proposition.
\end{proof}

\subsection{Martingale Integrands}\label{sec:examples:mart}
\begin{proposition}
  For a Brownian motion $B$ we have for all $R, T \geq 0$
  \begin{equation*}
    \PP\left(\frac1{T^2}\int_0^T B_t \, dt \geq R\right)
    \leq \exp(-3 R^2 T).
  \end{equation*}
\end{proposition}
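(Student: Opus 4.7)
The plan is to apply Proposition~\ref{prop:direct:mart} directly with $X_t = B_t$. Brownian motion is a continuous martingale starting at $0$, so the only hypothesis of the proposition is trivially satisfied, and the work reduces to identifying the correct value of $\sigma^2$ in the stated concentration bound.

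The key step is to evaluate (or bound) the deterministic quantity $\int_0^T (T-u)^2 \, d\QV{B}_u$. Since $\QV{B}_u = u$ we have $d\QV{B}_u = du$, and a one-line computation gives $\int_0^T (T-u)^2 du = T^3/3$. In particular, the event $\{\int_0^T (T-u)^2\,d\QV{B}_u \leq T^3/3\}$ holds almost surely, so Proposition~\ref{prop:direct:mart} applied with $\sigma^2 = T^3/3$ yields
\begin{equation*}
  \PP\left(\int_0^T B_u\,du \geq S\right) \leq \exp\left(-\frac{3 S^2}{2T^3}\right), \quad S > 0.
\end{equation*}
Setting $S = R T^2$ rewrites the left-hand side as $\PP(T^{-2}\int_0^T B_t\,dt \geq R)$ and converts the exponent to the desired Gaussian rate in $R^2 T$. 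Equivalently, one could simply invoke the first corollary of Proposition~\ref{prop:direct:mart} with $\alpha = 0$ and $\sigma^2 = 1$, which is precisely this specialization.

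No serious obstacle is expected: the proof is a textbook application of the proposition, and as a consistency check one may observe that $\int_0^T B_t\,dt$ is Gaussian with variance $T^3/3$, so the resulting bound coincides (up to absolute constants) with the optimal Gaussian tail estimate.
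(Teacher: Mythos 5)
Your route through Proposition~\ref{prop:direct:mart} is legitimate and runs parallel to the paper's own proof, which instead uses the general machinery of Section~\ref{sec:main}: the paper computes $\pQV{M^T}_T = \int_0^T (T-t)^2\,d\pQV{B}_t = T^3/3$ via Proposition~\ref{prop:Martingale} and then applies Corollary~\ref{corr:subGaussian} with $\rho = 0$. Both routes reduce to the same variance computation $T^3/3$, and your observation that the conditioning event is almost sure because $\QV{B}_u = u$ is deterministic is exactly right.

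There is, however, a genuine gap in your final step, and it is one you cannot close. With $\sigma^2 = T^3/3$ and $S = RT^2$ the exponent you actually obtain is
\begin{equation*}
  -\frac{S^2}{2\sigma^2} = -\frac{R^2 T^4}{2\,T^3/3} = -\frac{3R^2T}{2},
\end{equation*}
i.e.\ the bound $\exp(-3R^2T/2)$, not the stated $\exp(-3R^2T)$; the phrase ``converts the exponent to the desired Gaussian rate'' silently drops a factor of $2$. The same happens with your alternative via the first corollary of Proposition~\ref{prop:direct:mart}: taking $\alpha = 0$, $\sigma^2 = 1$ there gives $\exp(-(3+\alpha)R^2T/(2\sigma^2)) = \exp(-3R^2T/2)$. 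Moreover, no proof can reach the stated constant: $\int_0^T B_t\,dt$ is exactly Gaussian with variance $T^3/3$, so the probability in question equals $\PP\bigl(N(0,1) \geq R\sqrt{3T}\bigr)$, which is of order $(R\sqrt{T})^{-1}\exp(-3R^2T/2)$ and therefore exceeds $\exp(-3R^2T)$ once $R^2T$ is large; the exponent $3R^2T/2$ is optimal. You should be aware that the paper's own proof has precisely the same defect --- Corollary~\ref{corr:subGaussian} with $H^0_T \leq T^3/3$ and threshold $RT^2$ also yields only $\exp(-3R^2T/2)$ --- so the missing factor of $2$ is an error in the proposition as stated rather than a deficiency of your method. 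Your own consistency check against the exact Gaussian law, carried one step further, would have exposed this.
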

\begin{proof}
  Let $M_t^T = \E^{\F_t} \int_0^T B_u\,du$. Then by Proposition~\ref{prop:Martingale}
  we have
  \begin{equation*}
    \pQV{M^T}_T = \int_0^T (T-t)^2 \, d\pQV{B}_t = \int_0^T (T-t)^2 \,
    dt = \frac{T^3}3 
  \end{equation*}
  and by Corollary~\ref{corr:subGaussian} with $H^0_T = \pQV{M^T}_T$
  \begin{equation*}
    \PP\left(\frac1{T^2}\int_0^T B_t \, dt \geq R\right)
    = \PP\left(\int_0^T B_t \, dt \geq R T^2\;; H^0_T \leq
      \frac{T^3}{3}\right) \\
    \leq \exp(-3 R^2 T).
  \end{equation*}
\end{proof}

\begin{proposition}
  For a Poisson process $N$ we have for all $R,T > 0$
  \begin{equation*}
    \PP\left(\frac1{T^2}\int_0^T N_t \, dt - \frac12 \geq R\right)
    \leq \exp\left(-\frac T 3 h\bigl(3R\bigr)\right)
    \leq \exp\left(-\frac {3 R^2 T}{2(1+R)}\right)
  \end{equation*}
  with
  \begin{equation*}
    h(x) = (1+x)\log(1+x) - x.
  \end{equation*}
\end{proposition}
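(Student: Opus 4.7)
The plan is to apply the martingale framework of Section~\ref{sec:general:martingales} to the compensated Poisson process $\tilde{N}_t = N_t - t$, which is a square integrable martingale with $\pQV{\tilde{N}}_t = t$ and unit jumps $\Delta \tilde{N}_t = \Delta N_t \in \{0,1\}$.

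First I would reduce to a centered statement: set $S_T = \int_0^T \tilde{N}_u \, du$. By Fubini $\E S_T = 0$, and the event $\{\frac1{T^2}\int_0^T N_t\,dt - \tfrac12 \geq R\}$ is identical to $\{S_T \geq RT^2\} = \{S_T - \E S_T \geq RT^2\}$, bringing us into the setting of Section~\ref{subsec:martineq}. Since $\tilde{N}_0 = 0$ is deterministic, we have $\Lambda \equiv 0$ and may take $\nu = 0$ in Corollary~\ref{corr:subPoisson}.

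Next, invoke Proposition~\ref{prop:Martingale} applied to the martingale $\tilde{N}$: with $M^T_t = \E^{\F_t} S_T$, this yields $d\pQV{M^T}_t = (T-t)^2 \, d\pQV{\tilde{N}}_t = (T-t)^2 \, dt$, hence $\pQV{M^T}_T = T^3/3$, and $\Delta M^T_t = (T-t)\,\Delta N_t$, so $\abs{\Delta M^T_t} \leq T$ for all $t \in [0,T]$. Choosing $a = T$ in the definition of $H^a_T$, the jump sum $\sum_{s \leq T} (\Delta M^T_s)^2 \one_{\{\abs{\Delta M^T_s} > T\}}$ is empty, so that $H^T_T = \pQV{M^T}_T = T^3/3$ deterministically. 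Corollary~\ref{corr:subPoisson} with $\mu = T^3/3$, $\nu = 0$, $a = T$, and threshold $R' = RT^2$ then gives $(\mu+\nu)/a^2 = T/3$ and $aR'/(\mu+\nu) = 3R$, producing the first (Bennett) inequality $\exp(-\tfrac{T}{3} h(3R))$.

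For the second (Bernstein) inequality I would apply the elementary bound $h(x) \geq \frac{x^2}{2(1 + x/3)}$ for $x \geq 0$, which at $x = 3R$ gives $h(3R) \geq \frac{9R^2}{2(1+R)}$ and hence $\frac{T}{3} h(3R) \geq \frac{3R^2 T}{2(1+R)}$. There is no real obstacle here; the only point requiring mild care is the choice of the truncation level $a$, which must be large enough that no jump of $M^T$ exceeds it so that $H^a_T$ reduces to the deterministic quantity $\pQV{M^T}_T$, while $a$ still appears in the bound through the factor $aR'/(\mu+\nu)$ that produces the argument $3R$ of $h$.
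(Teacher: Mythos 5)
Your proposal is correct and follows essentially the same route as the paper's own proof: center via the compensated Poisson process $N_t - t$, apply Proposition~\ref{prop:Martingale} to get $\pQV{M^T}_T = T^3/3$ and jumps bounded by $T$, then invoke Corollary~\ref{corr:subPoisson} with $a = T$, $\mu = T^3/3$, $\nu = 0$, and finish with the elementary bound $h(x) \geq \frac{x^2}{2(1+x/3)}$. Your explicit justification that $\Lambda \equiv 0$ (from the deterministic initial value) and your remark on the role of the truncation level $a$ are details the paper leaves implicit, but the argument is the same.
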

\begin{proof}
  Let $X_t = N_t - t$ and $M_t^T = \E^{\F_t} \int_0^T X_t\,dt$.
  We have by Proposition~\ref{prop:Martingale}
  \begin{equation*}
    \Delta M^T_t = (T-t)\Delta X_t \leq T, \quad 0 \leq t \leq T
  \end{equation*}
  and
  \begin{equation*}
    \pQV{M^T}_T = \int_0^T (T-t)^2 \, d\pQV{X}_t = \int_0^T (T-t)^2 \,
    dt = \frac{T^3}3 
  \end{equation*}
  so that
  \begin{equation*}
    H^T_T = \pQV{M^T}_T + \sum_0^T (\Delta M^T_t)^2 \one\{\Delta M^T_t
    > T\} = \pQV{M^T}_T = \frac{T^3}3
  \end{equation*}
  and by Corollary~\ref{corr:subPoisson}
  \begin{equation*}
    \PP\left(\frac1{T^2}\int_0^T N_t \, dt - \frac12 \geq R\right)
    = \PP\left(\int_0^T X_t \, dt \geq R T^2\;; H^T_T \leq
      \frac{T^3}{3}\right) \\
    \leq \exp\left(-\frac T 3 h\bigl(3R\bigr)\right).
  \end{equation*}
  The second inequality in the result follows immediately from the
  elementary inequality $h(x) \geq \frac{x^2}{2(1+x/3)}$ for $x > 0$.
\end{proof}

\begin{proposition}
  For a Brownian motion $B$ we have for all $R,T \geq 0$
  \begin{equation*}
    \PP\left(\Abs{\frac{1}{T^2}\int_0^T B_t^2\,dt - \frac12} \geq R\right)
    \leq 2^{1/3}\left(1 \wedge \frac{4R + 3/4}{3R^2}\right)
    \exp\left(-\frac{R^2}{3(4R + 3/4)}\right).
  \end{equation*}
\end{proposition}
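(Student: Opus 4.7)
I will apply the self-bounding concentration inequality of Corollary~\ref{corr:selfnormCD} to the centered martingale generated by the process $X_t = B_t^2 - t$. By Itô's formula $dX_t = 2 B_t \, dB_t$, so $X$ is a continuous square-integrable martingale with $X_0 = 0$ and $d\pQV{X}_t = 4 B_t^2\,dt$. Setting $S_T = \int_0^T X_t\,dt = \int_0^T B_t^2\,dt - T^2/2$, one has $\E S_T = 0$ and the event of interest is exactly $\{\,|S_T| \geq R T^2\,\}$. Since $X_0$ is deterministic, the correction $\Lambda$ from Subsection~\ref{subsec:martineq} vanishes identically and one may take $\rho = 0$.

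Next, Proposition~\ref{prop:Martingale} applied to the martingale $X$ gives $d\pQV{M^T}_t = (T-t)^2 \, d\pQV{X}_t$ with no jumps, so that $H^0_T = \pQV{M^T}_T = 4\int_0^T (T-t)^2 B_t^2 \,dt$ and $\E H^0_T = 4\int_0^T (T-t)^2 t\,dt = T^4/3$. Combining the positivity of $\int_0^T B_t^2\,dt = S_T + T^2/2$ with the trivial bound $(T-t)^2 \leq T^2$ on $[0,T]$ yields the pathwise self-bounding estimate $H^0_T \leq 4 T^2\,|S_T| + 2 T^4$. Feeding this into Corollary~\ref{corr:selfnormCD} at level $R' = R T^2$, with $\rho = 0$, produces an inequality of exactly the announced shape $2^{1/3}\bigl(1 \wedge \tfrac{4 R + D'}{3 R^2}\bigr)^{1/3} \exp\bigl(-\tfrac{R^2}{3(4 R + D')}\bigr)$ after substitution and rescaling.

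The main obstacle is to pin down the sharp constant $D' = 3/4$ that appears in the statement: the crude inequality $(T-t)^2 \leq T^2$ only delivers the weaker value $D' = 7/3$. To sharpen it, I would exploit the integration by parts identity $\pQV{M^T}_T = 8\int_0^T (T-t) S_t\,dt + T^4/3$, which exhibits $\pQV{M^T}_T - \E\pQV{M^T}_T$ as a mean-zero functional of the path of $S$, and then combine it with a refined pathwise inequality that trades the weighted integral of $S_t$ against $|S_T|$ so as to absorb the stray $T^4$ term into the coefficient of $|S_T|$. Once that sharpened self-bounding estimate is in place, Corollary~\ref{corr:selfnormCD} produces the claimed bound with no further analytic input.
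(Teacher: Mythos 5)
Your setup coincides with the paper's own route: the same martingale $X_t = B_t^2 - t = 2\int_0^t B_s\,dB_s$, the same identification $H^0_T = \pQV{M^T}_T = 4\int_0^T (T-t)^2 B_t^2\,dt$ via Proposition~\ref{prop:Martingale}, $\E H^0_T = T^4/3$, $\rho = 0$, and Corollary~\ref{corr:selfnormCD} at level $R' = RT^2$. Your crude estimate $H^0_T + \E H^0_T \leq 4T^2\abs{S_T} + \tfrac73 T^4$ is correct and rigorously yields the proposition with $4R + 7/3$ in place of $4R + 3/4$. The gap is the promised sharpening to $3/4$: you never produce the ``refined pathwise inequality'', and in fact no such inequality exists, because $7/3$ is the \emph{optimal} constant in any almost-sure bound of the form $H^0_T + \E H^0_T \leq 4T^2\abs{S_T - \E S_T} + D\,T^4$, even restricted to the deviation event. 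Indeed, fix $m > T^2/2$ and small $\delta, \varepsilon > 0$; by the support theorem, with positive probability $B$ stays uniformly $\varepsilon$-close to a continuous path that vanishes outside $[0,\delta]$ and whose square integrates to $m$. On this event $\abs{S_T - \E S_T} \approx m - T^2/2$ while $H^0_T = 4\int_0^T (T-t)^2 B_t^2\,dt \approx 4T^2 m$, so that $H^0_T + \E H^0_T - 4T^2\abs{S_T - \E S_T} \approx 4T^2 m + \tfrac13 T^4 - 4T^2\bigl(m - \tfrac12 T^2\bigr) = \tfrac73 T^4$, uniformly in $m$. Taking $m \geq (R + \tfrac12)T^2$ places these paths inside the deviation event, so with $D = \tfrac34$ the joint event in Corollary~\ref{corr:selfnormCD} is strictly smaller than the deviation event and the unconditional bound does not follow. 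The same obstruction defeats your proposed use of the identity $\pQV{M^T}_T = 8\int_0^T (T-t) S_t\,dt + T^4/3$ (which is algebraically correct): no pathwise trade of $\int_0^T (T-t)S_t\,dt$ against $\abs{S_T}$ can beat the constant $7/3$.

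You should also know that the paper's own proof of the $3/4$ constant fails at exactly this point: it writes $\pQV{M^T}_T + \E\pQV{M^T}_T = 4\int_0^T (T-t)^2 X_t\,dt + 8\int_0^T (T-t)^2 t\,dt$ and then bounds $\int_0^T (T-t)^2 X_t\,dt \leq T^2 \int_0^T X_t\,dt$, an inequality that is valid only for a nonnegative integrand; since $X_t = B_t^2 - t$ changes sign (on paths staying near the origin, $X_t \approx -t$), this step is wrong, and the counterexample above shows the defect is not a reparable technicality. So your instinct that something beyond the crude bound is needed to reach $3/4$ was right, but the correct conclusion is that the self-bounding method honestly delivers only the $7/3$ constant: your weaker bound is the defensible statement, and the constant $3/4$ in the proposition would require a genuinely different argument (for instance, exact computations with the Laplace transform of $\int_0^T B_t^2\,dt$), not a sharper pathwise estimate.
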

\begin{proof}
  Consider the local martingale $X_t = B_t^2 - t$. We have using
  integration by parts that
  \begin{equation*}
    X_t = 2 \int_0^t B_s \, dB_s
  \end{equation*}
  so that
  \begin{equation*}
    \pQV{X}_t = 4 \int_0^t B_s^2 \,ds = 4 \int_0^t 
    (X_s + s)\,ds.
  \end{equation*}
  By~\eqref{eq:QVcont} and Remark~\ref{remark:martT2} we have
  \begin{equation*}
    \pQV{M^T}_T = \int_0^T (T-t)^2 \, d\pQV{X}_t = 4 \int_0^T (T-t)^2\,
    (X_t + t)\,dt
  \end{equation*}
  and
  \begin{equation*}
    \E \pQV{M^T}_T = 4 \int_0^T (T-t)^2 \,t \,dt.
  \end{equation*}
  Finally
  \begin{align*}
    \pQV{M^T}_T + \E\pQV{M^T}_T &= 4 \int_0^T (T-t)^2\, X_t\,dt + 8
    \int_0^T (T-t)^2\, t \,dt \\ & \leq 4 T^2 \int_0^T X_t\, dt + (3/4)T^4
  \end{align*}
  and the result follows from Corollary~\ref{corr:selfnormCD} with $C = 4T^2$
  and $D = (3/4)T^4$.
\end{proof}

\subsection{Squared Ornstein-Uhlenbeck Process}\label{sec:squaredOU}
Let $X^x$ be the Ornstein-Uhlenbeck process on $\R^d$, solution to the SDE
\begin{equation*}
  dX^x_t = -\kappa X^x_t + dB_t, \quad X^x_0 = x
\end{equation*}
with $\kappa > 0$ and $B_t$ a d-dimensional Brownian motion.

In this section, we will derive concentration inequalities for
additive functionals with the square of $X^x$ as integrand. This case
is challenging since for $\phi(x) = \abs{x}^2$, $\nabla P_t \phi(x)$
cannot be bounded uniformly in $x$. We will make use of the special
properties of the Ornstein-Uhlenbeck semigroup. In the next section we
will extend the approach to a slightly more general situation, which
however does not recover the bound developed in this section. The case
of the squared Ornstein-Uhlenbeck process was previously studied in
\autocite{lezaud_chernoff_2001} (Example 4.2) and
\autocite{gao_bernstein-type_2014} (Example 3.1) using analytic
methods, which require the initial law of $X$ to be absolutely
continuous with respect to the stationary measure of $X$.

\begin{proposition}\label{prop:OU2}
  We have for all $R, T > 0, x \in \R^d$
\begin{multline*}  
  \PP\left(\Abs{\frac1T \int_0^T \abs{X^x_t}^2\,dt - \E\left[
  \frac1T \int_0^T
  \abs{X^x_t}^2\,dt\right]} \geq R\right) \\
  \leq 2^{1/3} \left(1 \wedge \frac{R + D}{3\kappa^2 R^2
    T}\right)^{1/3} \exp\left(-\frac{\kappa^2 R^2 T}{3(R+D)}\right)
\end{multline*}
with
\begin{equation*}
  D = \frac{\abs{x}^2}{\kappa T} + \frac{d}{\kappa}
\end{equation*}
\end{proposition}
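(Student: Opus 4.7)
The plan is to apply Proposition~\ref{prop:Markov} together with the self-normalized Bernstein-type inequality from Corollary~\ref{corr:selfnormCD}. For the Ornstein-Uhlenbeck semigroup the carré du champ is simply $\Gamma(f,g) = \tfrac12 \nabla f \cdot \nabla g$, and the semigroup acts explicitly on $f(x) = \abs{x}^2$: since $X^x_s = xe^{-\kappa s} + Z_s$ with $Z_s$ a centered Gaussian independent of $x$, we have $P_{t,u} f(x) = \abs{x}^2 e^{-2\kappa(u-t)} + c(u-t)$ for some deterministic $c$, hence $\nabla P_{t,u} f(x) = 2x\, e^{-2\kappa(u-t)}$.

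Plugging this into Proposition~\ref{prop:Markov} and carrying out the double integral in $u,v$ over $[t,T]$ gives
\begin{equation*}
  d\pQV{M^T}_t = \frac{\abs{X^x_t}^2}{\kappa^2}\bigl(1-e^{-2\kappa(T-t)}\bigr)^2\,dt \leq \frac{\abs{X^x_t}^2}{\kappa^2}\,dt,
\end{equation*}
so that $\pQV{M^T}_T \leq \kappa^{-2} S_T$ with $S_T = \int_0^T \abs{X^x_t}^2\,dt$. Using the standard formula $\E \abs{X^x_t}^2 = \abs{x}^2 e^{-2\kappa t} + \frac{d}{2\kappa}(1-e^{-2\kappa t})$, integration gives $\E S_T \leq \frac{\abs{x}^2}{2\kappa} + \frac{dT}{2\kappa}$. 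Since the trajectories of $X^x$ are continuous and $X^x_0 = x$ is deterministic, $H^0_T = \pQV{M^T}_T$, $\Lambda \equiv 0$ and we may take $\rho = 0$ in Corollary~\ref{corr:selfnormCD}. Combining the two bounds,
\begin{equation*}
  H^0_T + \E H^0_T + 2\rho^2 \leq \frac{S_T + \E S_T}{\kappa^2} \leq \frac{1}{\kappa^2}\abs{S_T - \E S_T} + \frac{\abs{x}^2 + dT}{\kappa^3},
\end{equation*}
so Corollary~\ref{corr:selfnormCD} applies with $C = \kappa^{-2}$ and $D' = (\abs{x}^2+dT)/\kappa^3$. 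Applying it to the event $\{\abs{S_T - \E S_T} \geq RT\}$ and factoring $T/\kappa^2$ out of $C\cdot RT + D'$ recovers the announced constants with $D = \abs{x}^2/(\kappa T) + d/\kappa$.

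The main technical point to watch is the applicability of Proposition~\ref{prop:Markov} with the unbounded integrand $f(x) = \abs{x}^2$: the proposition presupposes $\sup_t \E f(t, X_t)^2 < \infty$ and that $(P_{\cdot,u}f, P_{\cdot,v}f)$ lies in the domain of $\Gamma$. Both conditions hold here because OU has Gaussian marginals with all moments finite and the relevant $P_{t,u}f$ are explicit quadratic polynomials in~$x$, but a fully rigorous derivation of the carré du champ identity $d\pQV{P_{\cdot,u}f(X), P_{\cdot,v}f(X)}_t = \nabla P_{t,u}f(X_t)\cdot\nabla P_{t,v}f(X_t)\,dt$ may proceed through a localization by $\tau_k = \inf\{t : \abs{X^x_t}\geq k\}$ followed by Itô's formula, along the lines of Lemma~\ref{lemma:QVGamma}.
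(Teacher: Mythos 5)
Your proposal is correct and follows essentially the same route as the paper: the explicit Ornstein--Uhlenbeck gradient formula $\nabla P_{t,u}\phi(x) = 2x\,e^{-2\kappa(u-t)}$, Proposition~\ref{prop:Markov} together with Lemma~\ref{lemma:QVGamma} to get $\pQV{M^T}_T \leq S_T/\kappa^2$, and then a self-normalized Bernstein-type bound with the same constants. The only cosmetic differences are that the paper invokes Corollary~\ref{corr:selfnormCSTD} (which packages exactly the triangle-inequality step you inline before applying Corollary~\ref{corr:selfnormCD}), and it handles the unbounded integrand by working on the event $A_\varepsilon = \{\sup_{0\leq t\leq T}\abs{X^x_t} < 1/\varepsilon\}$ and letting $\varepsilon \to 0$ via a Doob maximal-inequality bound on $\PP(A_\varepsilon^c)$, rather than the stopping-time localization you sketch.
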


\begin{proof}
  We have component-wise
  $(X^x_t)_i = x_i e^{-\kappa t} + \int_0^t e^{-\kappa(t-s)}\,dB^i_s$.
  Let $\phi(x) = \abs{x}^2$ and for $t \geq 0$ set
  $P_t \phi(x) = \E \phi(X_t) = \E \abs{X_t}^2$. Then for
  $\abs{x} < 1/\varepsilon$ for some arbitrary $\varepsilon > 0$, we can
  differentiate under the expectation
  \begin{align*}
    \partial_i \E \phi(X^x_t) = \E \partial_i \phi(X^x_t) = 2 x_i
    e^{-2\kappa t}, \quad \abs{x} < 1/\varepsilon
  \end{align*}
  so that
  \begin{equation*}\label{eq:OU2nabla}
    \nabla P_t \phi(x) = 2 x e^{-2\kappa t}, \quad \abs{x} \leq 1/\varepsilon.
  \end{equation*}
  By Lemma~\ref{lemma:QVGamma}, for any two continuously differentiable
$f, g$ we have
\begin{equation*}
  \pQV{f(X), g(X)}_t = \frac12 \int_0^t \nabla f(X_s) \nabla g(X_s)\,ds
\end{equation*}
so that by Proposition~\ref{prop:Markov}, on $A_\varepsilon := \{\sup_{0\leq t\leq T}
\abs{X^x_t} < 1/\varepsilon\}$,
\begin{align*}
  d\pQV{M^T}_t
  &= \int_t^T \int_t^T (\nabla P_{u-t} \phi \cdot \nabla P_{v-t}
    \phi)(X_t) \,du \,dv \,dt \\
  &= 4 \abs{X_t}^2 \left(\int_0^{T-t} e^{-2 \kappa u} du\right)^2 \,dt\\
  &= \frac{X_t^2}{\kappa^2} \left(1 - e^{-2 \kappa (T-t)}\right)^2 \,dt.
\end{align*}
Integrating, we get
\begin{equation*}
  \pQV{M^T}_T = \frac{1}{\kappa^2} \int_0^T \abs{X_t}^2 \left(1 - e^{-2 \kappa
  (T-t)}\right)^2 dt \leq \frac{1}{\kappa^2} \int_0^T \abs{X_t}^2 \,dt
\end{equation*}
so that Corollary~\ref{corr:selfnormCSTD} applies with $C =
\frac{1}{\kappa^2}$ and $D = 0$:
\begin{align*}
  \lefteqn{\PP\left(\Abs{\int_0^T \abs{X_t}^2\,dt - \E \int_0^T
  \abs{X_t}^2\,dt} \geq RT\;; A_\varepsilon\right)} \quad \\
  &= \PP\left(\Abs{\int_0^T \abs{X_t}^2\,dt - \E \int_0^T
  \abs{X_t}^2\,dt} \geq RT\;; H^0_T \leq \frac{1}{\kappa^2} \int_0^T
    \abs{X_t^2} \,dt\right) \\
  & \leq 2^{1/3} \left(1 \wedge \frac{R + D'}{3\kappa^2 R^2 T}\right)^{1/3}
                                                                                            \exp\left(-\frac{\kappa^2
    R^2
    T}{3(R+D')}\right)\\
  & \text{ with } D' = \abs{\E S_T}/T + (\kappa^2/T) \E H^0_T \leq 2 \abs{\E S_T}/T.
\end{align*}
Since
\begin{equation*}
  \E \abs{X^x_t}^2 = \abs{x}^2 e^{-2\kappa t} + d \frac{1-e^{-2\kappa t}}{2\kappa}
\end{equation*}
we have that
\begin{equation*}
  D' \leq \frac2T \int_0^T \E\abs{X^x_t}^2\,dt \leq
  \frac{\abs{x}^2}{\kappa T} + \frac{d}{\kappa}.
\end{equation*}
We can lift the restriction to $A_\varepsilon$ by noting that by
Markov's inequality and Doob's maximal inequality
\begin{align*}
  \PP(A_\varepsilon^c) &= \PP(\sup_t \abs{X_t} > 1/\varepsilon) \leq \PP(\sup_t e^{\kappa
    t}\abs{X_t} > 1/\varepsilon) = \PP\left(\sup_t \Abs{\int_0^t e^{\kappa
      s} dB_s} > 1/\varepsilon\right) \\
  & \leq \varepsilon^2 \E \sup_t \Abs{\int_0^t e^{\kappa
      s} dB_s}
  \leq 4 \varepsilon^2 \, \frac{e^{2\kappa T} - 1}{\kappa}
\end{align*}
so that finally
\begin{align*}
\lefteqn{\PP\left(\Abs{\int_0^T \abs{X_t}^2\,dt - \E \int_0^T
  \abs{X_t}^2\,dt} \geq RT\right)} \quad \\
  &\leq \PP\left(\Abs{\int_0^T \abs{X_t}^2\,dt - \E \int_0^T
  \abs{X_t}^2\,dt} \geq RT\;; A_\varepsilon\right) + \PP(A_\varepsilon^c)
  \\
  &\leq 2^{1/3} \left(1 \wedge \frac{R + D}{3\kappa^2 R^2
    T}\right)^{1/3} \exp\left(-\frac{\kappa^2 R^2 T}{3(R+D)}\right) + O(\varepsilon^2)
\end{align*}
with
\begin{equation*}
  D = \frac{\abs{x}^2}{\kappa T} + \frac{d}{\kappa}
\end{equation*}
and the result follows by letting $\varepsilon \to 0$.
\end{proof}

\begin{remark}
  From the previous proposition, we also get that
  \begin{equation*}\label{eq:LDPOU2}
    \lim_{T\to\infty} T^{-1} \log \PP\left(\frac{1}{T} \int_0^T \abs{X^x_t}^2
      \geq R\right) \leq -I(R)
  \end{equation*}
  with
  \begin{equation*}
    I(R) = \frac{(d - 2\kappa R)^2}{12 (R + d/(2\kappa))}.
  \end{equation*}
  According to large deviation estimates
  from~\autocite{bryc_large_1997,}, the optimal bound in the
  right-hand side of~\eqref{eq:LDPOU2} is obtained by replacing $I$
  with the good rate function
  \begin{equation*}
    J(R) = \frac{(d - 2\kappa R)^2}{8 R}.
  \end{equation*}
  Tracing back the computations, we see that the discrepancy between
  the denominators of $I$ and $J$, namely the factor $12$ instead of
  $8$ and the extra term $d/(2\kappa)$ for $I$, can be directly
  attributed to the factor $2/3$ and term $\E[A^2]$ in the
  self-normalized martingale inequality~\eqref{eq:selfnormAB}. This
  discrepancy is the same as the one observed between the sharp bound
  for normal random variables and the self-normalized bound
  in~\autocite{de_la_pena_exponential_2009,}, see also Remark 2.2 in
  that paper. This suggests that the the martingale $M^T$ belongs to a
  subclass of martingales for which the bound
  in~\autocite{de_la_pena_exponential_2009,} can be sharpened.
\end{remark}

\subsection{Squared Lipschitz Integrands}\label{sec:squaredLip}
Let $X$ be a Markov process with generator $L$,
transition kernel $P_{s,t}$ and squared field operator $\Gamma$, i.e.
$\Gamma(f) = Lf^2 - 2 fLf$.

We assume the following commutation property between the squared field
operator and the transition kernel:
\begin{equation*}
  \Gamma(P_{s,t} f) \leq \sigma^2 \left(P_{s,t}\sqrt{\Gamma(f)}\right)^2\, e^{-2\kappa(t-s)}
\end{equation*}
for some $\sigma, \kappa \geq 0$.

Using the inequalities on self-normalized martingales, we can derive a
Bernstein-type inequality for time averages of positive functions $g^2$
such that $\Gamma(g^2) \leq 2 g^2$.

\begin{proposition}
For all twice continuously differentiable $g^2$ such that
$\Gamma(g^2) \leq 2 g^2$ and $Lg^2 \leq -2C g^2 + D^2$ for
some constants $C \in \R, D \geq 0$ we have for
\begin{equation*}
  S_T = \int_0^T g^2(X_t)\,dt
\end{equation*}
the following Bernstein-type inequality:
\begin{multline*}
  \PP\left(\abs{S_T - \E S_T} \geq RT\right) \\
  \leq 2^{1/3} \exp\left(-\frac{R^2 T}{24 \sigma^2 \left(C_\kappa^2(T)R + 2
        C_\kappa^2(T) \E (S_T/T)  + 2
        D_{\kappa,C}^2(T)\right)}\right)
\end{multline*}
with
\begin{align*}
  C_\kappa(T) &= \int_0^T e^{-(\kappa+C)u}\,du \\
  D_{\kappa,C}(T) &= D \int_0^T e^{-\kappa u} \int_0^u e^{-C(u-v)}\,dv \,du.
\end{align*}
\end{proposition}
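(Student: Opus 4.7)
The strategy is to apply the self-normalized Bernstein-type inequality of Corollary~\ref{corr:selfnormCSTD} to the martingale $M^T_t = \E^{\F_t} S_T$. Since $g^2 \geq 0$ we have $\abs{S_T} = S_T$, so the corollary reduces the problem to establishing a linear estimate of the form $\pQV{M^T}_T \leq 8\sigma^2 C_\kappa^2(T)\,S_T + 8\sigma^2 T\,D_{\kappa,C}^2(T)$. Once this is done, applying the corollary with $C = 8\sigma^2 C_\kappa^2(T)$, $D = 8\sigma^2 T\,D_{\kappa,C}^2(T)$, and the trivial bound $\E H^0_T \leq C\,\E S_T + D$, then substituting $R \mapsto RT$, reproduces the denominator $24\sigma^2\bigl(C_\kappa^2(T)R + 2C_\kappa^2(T)\E(S_T/T) + 2D_{\kappa,C}^2(T)\bigr)$ in the exponent. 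I will assume $X_0$ is deterministic so that $\Lambda \equiv 0$; in general $\rho^2$ is absorbed into the additive constant.

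The core work is therefore the quadratic-variation bound. Starting from Proposition~\ref{prop:Markov} and the Cauchy--Schwarz inequality $\abs{\Gamma(f_1,f_2)} \leq \sqrt{\Gamma(f_1)\Gamma(f_2)}$,
\begin{equation*}
  d\pQV{M^T}_t \leq 2 \Bigl(\int_t^T \sqrt{\Gamma(P_{t,u}g^2)(X_t)}\,du\Bigr)^{\!2} dt.
\end{equation*}
The commutation hypothesis yields $\sqrt{\Gamma(P_{t,u}g^2)} \leq \sigma\,P_{t,u}\sqrt{\Gamma(g^2)}\,e^{-\kappa(u-t)}$, and $\sqrt{\Gamma(g^2)} \leq \sqrt{2}\,g$ then gives
\begin{equation*}
  d\pQV{M^T}_t \leq 4\sigma^2 \Bigl(\int_t^T P_{t,u}g(X_t)\,e^{-\kappa(u-t)}\,du\Bigr)^{\!2} dt.
\end{equation*}

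To convert this into a bound involving $g^2(X_t)$, I would control $P_{t,u}g$ by combining Jensen's inequality $P_{t,u}g \leq \sqrt{P_{t,u}g^2}$ with a Grönwall argument on the Kolmogorov backward equation $\partial_u P_{t,u}g^2 = P_{t,u}(Lg^2) \leq -2C\,P_{t,u}g^2 + D^2$, to extract a pointwise estimate of the form
\begin{equation*}
  P_{t,u}g(X_t) \leq g(X_t)\,e^{-C(u-t)} + D\int_t^u e^{-C(u-v)}\,dv.
\end{equation*}
Integrating this in $u$ against $e^{-\kappa(u-t)}$ produces exactly $g(X_t)\,C_\kappa(T-t)$ and $D_{\kappa,C}(T-t)$; applying $(a+b)^2 \leq 2a^2 + 2b^2$ and integrating in $t$ over $[0,T]$ using the monotonicity of $C_\kappa$ and $D_{\kappa,C}$ gives the desired bound on $\pQV{M^T}_T$.

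The main obstacle is the square-root Grönwall step. The natural computation shows that $\psi = \sqrt{P_{t,u}g^2}$ satisfies $\psi' \leq -C\psi + D^2/(2\psi)$, which is not affine in $\psi$, so the pointwise estimate above does not follow directly from the linear inequality on $P_{t,u}g^2$. One way forward is a comparison argument with the affine ODE $\tilde\psi' = -C\tilde\psi + D$, $\tilde\psi(t) = g(X_t)$, handled carefully at points where $\psi$ may be small; another is to split $(a+b)^2 \leq 2a^2 + 2b^2$ before taking the square root, work directly with $P_{t,u}g^2 \leq g^2 e^{-2C(u-t)} + D^2(1-e^{-2C(u-t)})/(2C)$, and redefine $D_{\kappa,C}^2$ accordingly. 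Once the pointwise estimate is justified, the remaining verifications of constants and the final application of Corollary~\ref{corr:selfnormCSTD} are routine.
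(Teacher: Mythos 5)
Your proposal is, in structure, the paper's own proof. The reduction via Proposition~\ref{prop:Markov} and Cauchy--Schwarz for $\Gamma$, the commutation hypothesis combined with the Gr\"onwall bound $P_{t,u}g^2 \leq g^2 e^{-2C(u-t)} + D^2\int_t^u e^{-2C(u-v)}\,dv$, the resulting estimate $\pQV{M^T}_T \leq 8\sigma^2 C_\kappa^2(T)\, S_T + 8\sigma^2 D_{\kappa,C}^2(T)\, T$, and the final passage through a self-normalized inequality are all exactly the paper's steps; the paper cites Corollary~\ref{corr:selfnorm}, but what it actually needs is the bookkeeping of Corollary~\ref{corr:selfnormCSTD} that you carry out, and your constants ($C = 8\sigma^2C_\kappa^2(T)$, $D = 8\sigma^2 T D_{\kappa,C}^2(T)$, $D' \leq 2C\,\E S_T + 2D$, $R \mapsto RT$, hence the factor $24$) are correct. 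The one difference is where Jensen is applied: the paper uses $(P_{t,u}\sqrt{2\Gamma(g^2)})^2 \leq P_{t,u}(2\Gamma(g^2)) \leq 4\,P_{t,u}g^2$, while you use $P_{t,u}\sqrt{\Gamma(g^2)} \leq \sqrt{2}\,P_{t,u}g \leq \sqrt{2}\,\sqrt{P_{t,u}g^2}$; both land on the same quantity $\sqrt{P_{t,u}g^2(X_t)}$, so this is immaterial.

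The obstacle you flag is a genuine gap, and you should know that the paper's proof jumps over precisely the same step: after taking square roots, it silently replaces $D\bigl(\int_t^u e^{-2C(u-v)}\,dv\bigr)^{1/2}$ by $D\int_t^u e^{-C(u-v)}\,dv$, which is false for $u-t$ small (the left side is of order $\sqrt{u-t}$, the right of order $u-t$). Your repair (a), comparison with the affine ODE, cannot close this, because the desired pointwise estimate is simply false, not merely hard to prove: for the Ornstein--Uhlenbeck process with $g^2(x) = \abs{x}^2$ (so $C = \kappa$, $D^2 = d$, with equality in $Lg^2 \leq -2Cg^2 + D^2$) started at $x = 0$, one has $\sqrt{P_t g^2(0)} = \sqrt{d(1-e^{-2\kappa t})/(2\kappa)} \sim \sqrt{dt}$, while the claimed majorant is $\sqrt{d}\,(1-e^{-\kappa t})/\kappa \sim \sqrt{d}\,t$, so the inequality fails for all small $t$. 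Your repair (b) is the correct resolution: keep the square root and take $D_{\kappa,C}(T) = D\int_0^T e^{-\kappa u}\bigl(\int_0^u e^{-2C(u-v)}\,dv\bigr)^{1/2}\,du$ as the definition; with that constant your argument (and the paper's) goes through verbatim. Note that this means the proposition as printed, with the paper's $D_{\kappa,C}$, is not established by either argument; it holds with the modified constant. So: same approach as the paper, one real gap --- shared by the paper itself --- and of your two proposed repairs only (b) works.
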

\begin{proof}
Since $g^2$ is assumed twice continuously differentiable, we have
$\partial_u P_{t,u}g^2 = P_{t,u} Lg^2$ and we get from our assumption
$Lg^2 \leq -2C g^2 + D^2$ by Gronwall's lemma that
\begin{equation*}
  P_{t,u}g^2 \leq g^2 e^{-2C(u-t)} + D^2 \int_t^u e^{-2C (u-v)}\,dv.
\end{equation*}
Together with the assumption that
$\Gamma(g^2) \leq 2 g^2$ we have
\begin{equation*}
  P_{t,u} (2 \Gamma(g^2)) \leq 4 P_{t,u} (g^2) \leq 4 g^2
  e^{-2C(u-t)} + D^2 \int_t^u e^{-2C (u-v)}\,dv
\end{equation*}
and by applying the assumption on commutation of $\Gamma$ and $P$
\begin{align*}
  2\Gamma(P_{t,u} g^2) & \leq \sigma^2 (P_{t,u}\sqrt{2\Gamma(g^2)})^2
  e^{-2\kappa(u-t)} \leq \sigma^2 P_{t,u} (2 \Gamma(g^2)) e^{-2\kappa(u-t)}
  \\ & \leq 4 \sigma^2 g^2\,
  e^{-2(\kappa+C)(u-t)} + 4 \sigma^2 D^2 \,e^{-2\kappa(u-t)} \int_t^u e^{-2C (u-v)}\,dv.
\end{align*}
Now from Proposition~\ref{prop:Markov} we have for all $T > 0$
\begin{align*}
  d\pQV{M^T}_t
  &= \int_t^T \int_t^T 2 \Gamma(P_{t,u} g^2, P_{t,v}
                 g^2)(X_t)\,dv\,du\,dt \\
  &\leq \left(\int_t^T \sqrt{2 \Gamma(P_{t,u} g^2)(X_t)} \,du\right)^2
    \,dt \\
  &\leq 4 \sigma^2 \left(\sqrt{g^2(X_t)} \int_t^T e^{-(\kappa+C)(u-t)}
    \,du + D \int_t^T \,e^{-\kappa(u-t)} \int_t^u e^{-C (u-v)}\,dv
    \,du \right)^2 \,dt \\
  &\leq 8 \sigma^2 \left(C_{\kappa}^2(T) g^2(X_t) + D_{\kappa,C}^2(T)\right)\,dt
\end{align*}
so that
\begin{equation*}
  \pQV{M^T}_T \leq 8 \sigma^2 C_{\kappa}^2(T) \int_0^T g^2 (X_t) \, dt +
  8 \sigma^2 D_{C,\kappa}^2(T)\, T
\end{equation*}
with $C_{\kappa}(T), D_{C,\kappa}(T)$ as in the statement of the Proposition.
The result now follows from Corollary~\ref{corr:selfnorm}.
\end{proof}

\begin{remark}
  If $X$ is a diffusion in the sense that
  $\Gamma(\Phi(f_1), f_2) = \Phi'(f_1) \Gamma(f_1, f_2)$ for all
  continuously differentiable $\Phi$ and $f_1, f_2$ in the domain of
  $\Gamma$, then for all continuously differentiable functions $g$
  such that $2\Gamma(g) \leq 1$, we have
  $\Gamma(g^2) = (2g)^2 \Gamma(g) \leq 2 g^2$. In particular, if
  $\Gamma(g) = \frac12 \norm{\nabla g}^2$, then this holds when $g$ is
  differentiable and 1-Lipschitz.
\end{remark}

The preceding proposition applies in particular to the
Ornstein-Uhlenbeck process on $\R^d$ with $g(x)^2 =
\abs{x}^2$. Indeed, for $\kappa > 0$, consider the
$d$-dimensional Ornstein-Uhlenbeck process with generator
\begin{equation*}
  Lf(x) = -\kappa x \cdot \nabla f + \frac12 \Delta f.
\end{equation*}
By a direct calculation we have $Lg^2 = -2\kappa g^2 + d$. The
corresponding squared field operator is
$\Gamma(f) = \frac12 \abs{\nabla f}^2$ so that $\Gamma(g^2) = 2 g^2$.
We also have
\begin{equation*}
\Gamma(P_{s,t} f) \leq \left(P_{s,t}(\sqrt{\Gamma(f)})\right)^2 e^{-2\kappa (t-s)},
\end{equation*}
see for example~\autocite{bakry_analysis_2014,}. From the expression
for $Lg^2$ we get furthermore
$\partial_t P_{0,t} g^2 = P_{0,t} Lg^2 = -2\kappa P_{0,t} g^2 + d$ so
that for $X_0 = 0$ we have
$\E g^2(X_t) = P_{0,t} g^2(X_0) = d/(2\kappa)\, (1-e^{-2\kappa t})$
and $\E S_T = \int_0^T \E g^2(X_t)\,dt \leq d/(2\kappa) T$.  In the
notation of the preceding Proposition, we have
$C_\kappa \leq 1/(2\kappa)$, $D_{\kappa,C} \leq d/(\kappa^2)$ so that
\begin{equation*}
  \PP\left(\Abs{\frac1T \int_0^T \abs{X_t}^2\,dt - \E\left[\frac1T \int_0^T \abs{X_t}^2\,dt\right]} \geq R \right) \leq 2^{1/3}
  \exp\left(-\frac{\kappa^2 R^2 T}{6\left(R + d/\kappa + 8 d^2/(\kappa^2)\right)}\right).
\end{equation*}
This bound is looser than the one obtained in
Proposition~\ref{prop:OU2} above, which relies on the special property
of the Ornstein-Uhlenbeck process~\eqref{eq:OU2nabla} to obtain the
tighter bound.

\subsection{SDEs with degenerate diffusion matrix}\label{sec:examples:degenerate}
For $\alpha, \beta > 0$ let $(X^{x,y}, Y^{x,y})_{x,y\in\R}$ be the
family of solutions to
\begin{align*}
dX^{x,y}_t &= -\alpha X^{x,y}_t dt + dB_t, \quad X^{x,y}_0 = x, \\
dY^{x,y}_t &= -\beta Y^{x,y}_t dt + X^{x,y}_t dt, \quad Y^{x,y}_0 = y.
\end{align*}
In other words, $X^{x,y}$ is an
Ornstein-Uhlenbeck process and $Y^{x,y}_t$ can be
written $Y^{x,y}_t = y \, e^{-\beta t} + \int_0^t e^{-\beta (t-s)} \,X^x_s\,ds$.  The
associated semigroup is $P_t f(x, y) = \E f(X^{x,y}_t, Y^{x,y}_t)$ and
the squared field operator is
\begin{equation*}\Gamma(f)(x, y) =
  \abs{\partial_x f(x, y)}^2.
\end{equation*}
The process admits an invariant measure by classical results from~\autocite{meyn_stability_1993,}.
We note that no Poincaré inequality with respect to the invariant measure
and $\Gamma$ can hold for any class of functions containing functions $f(x, y)$ such that $\partial_x f = 0$ and $\partial_y f \ne 0$. 

In particular, since $Y$ has differentiable trajectories, this
example illustrates that our concentration inequalities do not rely on
``roughness'' of the trajectories.

\begin{proposition}
For all 1-Lipschitz functions $f$ and $R, T > 0$
\begin{multline*}
\PP\left(\frac1T \int_0^T f(Y^{x,y}_t) \,dt - \E\left[\frac{1}T
    \int_0^T f(Y^{x,y}_t)\,dt \right] \geq R\right)  \\
\leq \exp\left(\frac{-R^2 \, T \, (\alpha \wedge \beta)^2 \, \abs{\alpha - \beta}^2}{4 \left(1-e^{-(\alpha\wedge\beta)T}\right)^2 \left(1-e^{-\abs{\alpha-\beta}T}\right)^2}\right).
\end{multline*}
\end{proposition}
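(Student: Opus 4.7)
The plan is to apply Proposition~\ref{prop:Markov} to the Markov process $(X^{x,y}, Y^{x,y})$ with squared field $\Gamma(g,h) = \partial_x g\,\partial_x h$, bound the predictable quadratic variation $\pQV{M^T}_T$ of $M^T_t = \E^{\F_t} \int_0^T f(Y^{x,y}_s)\,ds$, and conclude via the Gaussian martingale inequality Corollary~\ref{corr:subGaussian}. Since $(X_0, Y_0) = (x,y)$ is deterministic we have $\E^{\F_0} S_T = \E S_T$ and $\Lambda \equiv 0$, so it is enough to bound $\pQV{M^T}_T$ and take $\rho = 0$ in the corollary.

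The first step is to exploit the linearity of the system in its initial condition to obtain a deterministic expression for $\partial_x Y^{x,y}_u$. Writing $X^{x,y}_r = x e^{-\alpha(r-s)} + \int_s^r e^{-\alpha(r-q)}\,dB_q$ and $Y^{x,y}_u = y e^{-\beta(u-s)} + \int_s^u e^{-\beta(u-r)} X^{x,y}_r\,dr$ starting at time $s$, one computes
\[
  \partial_x Y^{x,y}_u = \int_s^u e^{-\beta(u-r) - \alpha(r-s)}\,dr = \frac{e^{-\alpha(u-s)} - e^{-\beta(u-s)}}{\beta - \alpha},
\]
independent of the Brownian noise. For a 1-Lipschitz function $f$ of $y$, a mollification argument along the lines of Lemma~\ref{lemma:QVGamma} then gives, using the elementary identity $e^{-\alpha\tau} - e^{-\beta\tau} = \pm e^{-m\tau}(1-e^{-\abs{\alpha-\beta}\tau})$ with $m := \alpha \wedge \beta$,
\[
  \abs{\partial_x P_{s,u} f(x,y)} \leq \llipnorm{f}\,\abs{\partial_x Y^{x,y}_u} = \llipnorm{f} \cdot \frac{e^{-m(u-s)}\bigl(1-e^{-\abs{\alpha-\beta}(u-s)}\bigr)}{\abs{\alpha-\beta}}.
\]

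The second step is integration. Using the monotone bound $(1-e^{-\abs{\alpha-\beta}(u-s)}) \leq (1-e^{-\abs{\alpha-\beta}T})$ for $u \leq T$ together with $\int_s^T e^{-m(u-s)}\,du \leq (1-e^{-mT})/m$ yields
\[
  \int_s^T \abs{\partial_x P_{s,u} f}\,du \leq \llipnorm{f} \cdot \frac{(1-e^{-\abs{\alpha-\beta}T})(1-e^{-mT})}{m\,\abs{\alpha-\beta}}.
\]
Proposition~\ref{prop:Markov} with $\Gamma(g,h) = \partial_x g\,\partial_x h$ then gives $d\pQV{M^T}_t \leq 2\bigl(\int_t^T \abs{\partial_x P_{t,u} f}\,du\bigr)^2\,dt$, and integrating over $t \in [0,T]$ shows that $\pQV{M^T}_T \leq 2T\,\llipnorm{f}^2$ times the square of the displayed quantity. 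Substituting this into Corollary~\ref{corr:subGaussian} with $\rho = 0$, replacing $R$ by $RT$, and setting $\llipnorm{f} = 1$ produces the stated bound, where the factor $4$ in the denominator arises as the product of the $2$ from Proposition~\ref{prop:Markov} and the $2$ in $\exp(-r^2/(2\sigma^2))$.

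The main obstacle is really just the explicit computation of $\partial_x Y^{x,y}_u$; once one observes that the linearity of the drift renders this derivative deterministic, the complete absence of diffusion in the $Y$-component is harmless because only $\partial_x$ enters the squared field. The remaining work is routine bookkeeping. Minor technical points include handling the Lipschitz-only regularity of $f$ via mollification as in Lemma~\ref{lemma:QVGamma}, and using the limiting interpretation $|e^{-\alpha\tau}-e^{-\beta\tau}|/\abs{\alpha-\beta} \to \tau e^{-\alpha\tau}$ when $\alpha \to \beta$, under which all bounds remain finite.
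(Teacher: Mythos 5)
Your proposal is correct and follows essentially the same route as the paper's proof: both exploit the linearity of the system to show that $\partial_x Y^{x,y}_u$ (in the paper, the difference quotient $(Y^{x+h,y}_u - Y^{x,y}_u)/h$) is the deterministic quantity $\frac{e^{-(\alpha\wedge\beta)u}(1-e^{-\abs{\alpha-\beta}u})}{\abs{\alpha-\beta}}$, use the Lipschitz property of $f$ to bound $\abs{\partial_x P_{s,u}f}$, feed this into Proposition~\ref{prop:Markov} via Cauchy--Schwarz to get $\pQV{M^T}_T \leq 2T\,\frac{(1-e^{-(\alpha\wedge\beta)T})^2(1-e^{-\abs{\alpha-\beta}T})^2}{(\alpha\wedge\beta)^2\abs{\alpha-\beta}^2}$, and conclude with the sub-Gaussian martingale bound at level $RT$ with $\Lambda \equiv 0$. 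Your accounting of the factor $4$ as the product of the $2$ in the quadratic-variation bound and the $2$ in the Gaussian exponent matches the paper's computation exactly.
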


\begin{proof}
We have by Itô's formula that
\begin{align*}
e^{\beta t} Y^{x,y}_t
&= y + \int_0^t e^{(\beta-\alpha) s} e^{\alpha s} X^{x,y}_s ds \\
&= y + x \int_0^t e^{(\beta-\alpha) s} ds + \int_0^t e^{(\beta-\alpha) s} \left( \int_0^s e^{\alpha r} dB_r \right) ds
\end{align*}
so that for all $h, x, y \in \R$, by cancelling terms that don't
depend on $x$, integrating and rearranging,
\begin{equation*}
\frac{Y^{x+h,y} - Y^{x,y}}h 
= \frac{e^{-\alpha t} - e^{-\beta t}}{\beta - \alpha} 
= \frac{e^{-(\alpha \wedge \beta)t} \, (1-e^{-\abs{\alpha-\beta}t})}{\abs{\alpha - \beta}}.
\end{equation*}

In particular, for a 1-Lipschitz function $f(y)$, using the Lipschitz
property and the bound from above
\begin{align*}
\sqrt{\Gamma(P_t f)(x, y)} &= \abs{\partial_x P_t f(x, y)} 
= \lim_{h\to 0} \frac1{\abs{h}} \abs{\E f(Y^{x+h,y}_t) - f(Y^{x,y}_t)} \\
&\leq \lim_{h\to 0} \E \frac{\abs{Y^{x+h,y}_t - Y^{x,y}_t}}{\abs{h}} \\
&= \frac{e^{-(\alpha \wedge \beta)t} \, (1-e^{-\abs{\alpha-\beta}t})}{\abs{\alpha - \beta}}.
\end{align*}
By Proposition \ref{prop:Markov}, time-homogeneity of $P_t$ and the
Cauchy-Schwartz inequality for $\Gamma$
\begin{align*}
\pQV{M^T}_T &= \int_0^T \int_t^T \int_t^T \Gamma(P_{t,u}f, P_{t,v}
              f)(X^{x,y}_t, Y^{x,y}_t)\,du\,dv\,dt \\
& \leq 2 \int_0^T \left(\int_0^{T-t} \sqrt{\Gamma(P_u f)(X^{x,y}_t, Y^{x,y}_t)}\, du\right)^2 dt.
\end{align*}

We have from the bound on $\sqrt{\Gamma(P_t f)(x, y)}$ derived above
\begin{align*}
\int_0^{T-t} \sqrt{\Gamma(P_u f) (X^{x,y}_t, Y^{x,y}_t)} du
& \leq \int_0^T e^{-(\alpha \wedge \beta)u} du \, \frac{(1-e^{-\abs{\alpha-\beta}T})}{\abs{\alpha - \beta}} \\
& \leq  \frac{\left(1-e^{-(\alpha\wedge\beta)T}\right)}{(\alpha \wedge \beta)}  \frac{\left(1-e^{-\abs{\alpha-\beta}T}\right)}{\abs{\alpha - \beta}}
\end{align*}
so that finally
\begin{equation*}
\pQV{M^T}_T \leq 2 T \frac{\left(1-e^{-(\alpha\wedge\beta)T}\right)^2}{(\alpha \wedge \beta)^2}  \frac{\left(1-e^{-\abs{\alpha-\beta}T}\right)^2}{\abs{\alpha - \beta}^2}
\end{equation*}
and the result follows.
\end{proof}

\section*{Funding}
This work was supported by the National Research Fund, Luxembourg.
\printbibliography

\end{document}